\documentclass[reqno,12pt]{amsart}
\usepackage{latexsym}
\usepackage{amsfonts}
\usepackage{amssymb}
\usepackage{mathrsfs}
\usepackage[all]{xy}
\usepackage{bbm}
\usepackage{enumitem}
\usepackage{color}
\usepackage{mathtools}
\usepackage[mathscr]{euscript}
\usepackage{tikz}
\usepackage{amsmath,amsthm,amssymb,mathrsfs,amsfonts,verbatim,color,leftidx}
\usepackage{amsbsy}
\usepackage[colorlinks,urlcolor=blue,linkcolor=blue,citecolor=brown]{hyperref}
\usetikzlibrary{matrix,arrows}
\usepackage{color}
\definecolor{jasper}{rgb}{0,0.5,0}

\newtheorem{theorem}{Theorem}[section]
\newtheorem{corollary}[theorem]{Corollary}
\newtheorem{lemma}[theorem]{Lemma}
\newtheorem{conjecture}[theorem]{Conjecture}
\newtheorem{problem}[theorem]{Problem}

\newtheorem{proposition}[theorem]{Proposition}
\theoremstyle{definition}
\newtheorem{example}{Example}

\newtheorem{remark}[theorem]{Remark}
\numberwithin{equation}{section}

\newcommand{\RR}{\mathbb{R}}
\newcommand{\TT}{\mathbb{T}}
\newcommand{\CC}{\mathbb{C}}
\newcommand{\NN}{\mathbb{N}}

\newcommand{\QQ}{\mathbb{Q}}
\newcommand{\ZZ}{\mathbb{Z}}
\newcommand{\PP}{\mathbb{P}}

\newcommand{\cR}{\mathcal{R}}

\newcommand{\cP}{\mathcal{P}}

\newcommand{\cD}{\mathcal{D}}

\newcommand{\cL}{\mathcal{L}}

\newcommand{\bQ}{\mathbf{Q}}

\newcommand{\diag}{\mathrm{diag}}
\newcommand{\dimH}{\mathrm{\dim_H}}

\newcommand{\id}{\mathrm{id}}
\newcommand{\dist}{\mathrm{dist}}

\newcommand{\SL}{\mathrm{SL}}

\newcommand{\Span}{\mathrm{Span}}

\newcommand{\la}{\langle}
\newcommand{\ra}{\rangle}

\newcommand{\Bad}{\mathbf{Bad}}
\newcommand{\Mad}{\mathbf{Mad}}

\newcommand {\ignore}[1]  {}

\newif\ifdraft\drafttrue

\draftfalse

\renewcommand{\emptyset}{\varnothing}
\renewcommand{\setminus}{\smallsetminus}

\begin{document}

\title[The set of counterexamples to ULC is HAW]{Winning property of counterexamples to\\ the uniform Littlewood's Conjecture}

\author{Vasiliy Neckrasov}
\address{Department of Mathematics,
  Brandeis University,
  Waltham, MA 02453, USA
}
\email{vneckrasov@brandeis.edu}

\author{Chengyang Wu}
\address{Department of Mathematics, University of Chicago, Chicago, Illinois, 60637, U.S.}
\email{chengyangwu1999@gmail.com}

\author{Bohan Yang}
\address{Shanghai Institute for Mathematics and Interdisciplinary Sciences, Shanghai \indent 200433, China}
\address{Research Institute of Intelligent Complex Systems, Fudan University, Shanghai \indent 200433, China} 
\email{bhyang@simis.cn}

\date{September 2, 2026}

\begin{abstract}
    In this paper, we prove that the set of counterexamples to the uniform Littlewood's conjecture proposed in \cite{BFK25}, that is, the set of pairs of real numbers $(x,y)$ satisfying $$
    \limsup_{Q\to +\infty}Q\cdot \min_{1\leq q\leq Q}\la qx\ra\la qy\ra>0,
    $$
    is hyperplane absolute winning. We show that a stronger statement holds: the set of real pairs $(x,y)$ satisfying $$
    \liminf_{m\to +\infty}Q_m\cdot \min_{1\leq q\leq Q_m}\la qx\ra\la qy\ra>0,
    $$
    is hyperplane absolute winning if $Q_{m+1} \gg Q_m^{\tau}$ for some $\tau > 1$.
    In particular, the above sets have full Hausdorff dimension in $\RR^2$.
    In addition, we prove that these sets are absolute winning on every regular $C^2$ planar curve whose set of points of nonzero curvature is itself absolute winning on the curve. We also establish analogous results for certain lines.
\end{abstract}

\maketitle

\section{Introduction}

\subsection{Littlewood's conjecture and its uniform version}\label{SS:LCULC}
For $x\in\RR$, let $\la x\ra$ denote its distance to the nearest integer. In the 1930s, J. E. Littlewood raised the following conjecture: 

\begin{conjecture}[Littlewood]
    For every $(x,y)\in\RR^2$, 
\begin{equation}\label{E:LC}
	\liminf_{q\to +\infty}q\cdot \la qx\ra\la qy\ra=0.
\end{equation}
\end{conjecture}

Littlewood's conjecture remains far from resolved. It follows from a theorem of Gallagher \cite{Gal62} that almost all pairs $(x,y)$ satisfy \eqref{E:LC}. The best progress towards Littlewood's conjecture was obtained by Einsiedler, Katok, and Lindenstrauss in \cite{EKL06}, where they proved that the set of counterexamples to \eqref{E:LC} has zero Hausdorff dimension.

Recently, Bandi, Fregoli, and Kleinbock in \cite{BFK25} proposed a uniform version of Littlewood's conjecture. In the two-dimensional case, they asked whether every $(x,y)\in\RR^2$ satisfies an even stronger condition than \eqref{E:LC}:
\begin{equation}\label{E:ULC}
	\limsup_{Q\to +\infty}Q\cdot \min_{1\leq q\leq Q}\la qx\ra\la qy\ra=0.
\end{equation}
In the same paper, the authors proved that the set of counterexamples to \eqref{E:ULC} has zero Lebesgue measure. However, they were inclined to believe that there should exist some $(x,y)\in\RR^2$ satisfying \eqref{E:LC} but not \eqref{E:ULC}.

It was subsequently proved by Schleischitz \cite{Sch26} that the above two-dimensional uniform Littlewood's conjecture is false; moreover, its counterexamples form a residual set in $\RR^2$. A simpler proof was given by Moshchevitin in \cite{Mos26}. 

A natural follow-up question is to investigate the Hausdorff dimension of the set of counterexamples to \eqref{E:ULC}. 
When the first version of the present paper was submitted, Shulga \cite{Shu26} independently announced a lower bound $3/2$ for this dimension.


In this paper, our main theorem shows that the set of counterexamples to \eqref{E:ULC} in fact has full Hausdorff dimension in $\RR^2$. Moreover, this will be derived from the much stronger fact that this set is \textit{hyperplane absolute winning} (HAW) on $\RR^2$; see Section \ref{S:Schmidt} for the relevant definitions and properties.

\begin{theorem}\label{T:main} The set of counterexamples to \eqref{E:ULC}
	\begin{equation}\label{E:noULC}
		E:=\left\{
        (x,y)\in\RR^2: \limsup_{Q\to +\infty}Q\cdot \min_{1\leq q\leq Q}\la qx\ra\la qy\ra>0
        \right\}
	\end{equation}
    is HAW on $\RR^2$. In particular, it has full Hausdorff dimension in $\RR^2$.
\end{theorem}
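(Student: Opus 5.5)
Following the abstract, I would not attack $E$ directly. Instead I would prove the stronger statement: fix a sequence $Q_m$ with $Q_{m+1}\ge c\,Q_m^{\tau}$ for some $\tau>1$, say $Q_m=\lfloor 2^{\tau^m}\rfloor$. Set
\[
E(\{Q_m\}):=\Bigl\{(x,y): \liminf_{m\to\infty} Q_m\min_{1\le q\le Q_m}\la qx\ra\la qy\ra>0\Bigr\}.
\]
Then $E(\{Q_m\})\subseteq E$. Since HAW is inherited by supersets, it suffices to show that $E(\{Q_m\})$ is HAW. Full Hausdorff dimension then follows from the standard fact that HAW sets in $\RR^2$ are winning and have full dimension, as recalled in Section \ref{S:Schmidt}.

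\textbf{Target set for Alice.} For a small $c>0$, the condition $Q_m\min_{q\le Q_m}\la qx\ra\la qy\ra\ge c$ fails exactly when $(x,y)$ lies in
\[
\bigcup_{q\le Q_m}\ \bigcup_{(a,b)\in\ZZ^2} R_{q,a,b},\qquad R_{q,a,b}=\Bigl\{(x,y): |qx-a|\,|qy-b|<\tfrac{c}{Q_m}\Bigr\}.
\]
Each $R_{q,a,b}$ is a hyperbolic cross centred at the rational point $(a/q,b/q)$. In the HAW game Alice must, for all large $m$, keep the outcome away from all these crosses. A cross is contained in the $\delta$-neighbourhoods of the two lines $x=a/q$ and $y=b/q$ together with a small central square. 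The arms have width about $c/(q^2 Q_m\,|y-b/q|)$ along the line.

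\textbf{Organising the game.} I would organise the game by scales. At a stage where Bob's ball has radius $r\approx\rho$, Alice handles the $m$ with $Q_m^{-1}$ comparable to a suitable power of $\rho$. The key counting step is a simplex-lemma type statement: inside a ball of radius $\rho$, the rational points $(a/q,b/q)$ with $q\le Q$ are sparse. Any two such points are at distance at least $1/(qq')\ge Q^{-2}$ apart. In addition, Davenport's simplex lemma says that points with $q\le Q$ inside a ball of radius about $Q^{-3/2}$ lie on a single line. The vertical and horizontal lines $x=a/q$ also form a $Q^{-2}$-separated family.

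I would split the forbidden set $\{x:\la qx\ra<\epsilon_q\}\times\RR$ into vertical strips, and similarly horizontal ones. Along a vertical line at $x=a/q$, the cross only needs $\la qx\ra\la qy\ra<c/Q_m$. So if Alice keeps $\la qy\ra$ bounded below, she only needs to avoid a strip of width roughly $c/(qQ_m)$ around $x=a/q$.

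In the HAW game Alice deletes one hyperplane neighbourhood (here a line neighbourhood) per turn. So I would use the standard reduction that it suffices to show she can delete boundedly many strips per stage, by speeding up the game via the $\beta$-parameter. The strategy is then:

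(i) At each stage, list the rational points $p/q$ with $q\le Q_m$ that come within distance $\rho$ of the current ball.

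(ii) The rapid growth $Q_{m+1}\gg Q_m^{\tau}$ means that each $m$ is active for only a bounded number of consecutive stages. Over those stages, the relevant denominators at a given scale form lacunary blocks, which keeps the count bounded.

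(iii) Alice deletes neighbourhoods of the lines $x=a/q$ and $y=b/q$ through those points. Where the simplex lemma forces several centres onto a common line, she deletes a neighbourhood of that line as well.

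\textbf{Main obstacle.} The crux is the count in step (ii). I need to show that, at each scale, the number of dangerous lines meeting a ball of radius $\rho$ is uniformly bounded, with width $\beta\rho$ sufficient to cover the cross arms. This is where the lacunarity exponent $\tau>1$ enters: for constant-ratio sequences the naive count grows logarithmically. I would try to match the block $Q_m\in[\rho^{-\alpha},\rho^{-\alpha\tau}]$ with the separation $Q^{-2}$, and use the one-dimensional version of the badly-approximable HAW argument separately in each coordinate. A second delicate point is the central part of each cross near $(a/q,b/q)$. There I would remove a ball, which is allowed as a subset of a line neighbourhood. The constant $c$ is then chosen depending on $\beta$ and $\tau$ to close the estimates.
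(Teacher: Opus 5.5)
\textbf{There is a genuine gap.} Your reduction is sound. $E_{\bQ}\subseteq E$ by Lemma \ref{L:LCULC}, supersets of HAW sets are HAW, and this is close to what the paper does: it picks $Q_m$ adaptively during the game. For a fixed sequence, its direct strategy needs $Q_{m+1}\gtrsim Q_m^3$, and smaller $\tau>1$ is reached through the residue-class intersection trick of Section \ref{S:further}.

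Everything after the reduction is the actual proof, and you leave it as the ``main obstacle''. Moreover, the covering you sketch is not correct. Write $\kappa=\eta^2/(q^2Q_m)$ for the cross $\{|x-a/q|\,|y-b/q|<\kappa\}$.
\begin{itemize}
\item Two strips of width $\delta$ about $x=a/q$ and $y=b/q$ cover the cross only if $\delta\ge\sqrt{\kappa}$. Otherwise the leftover is not a small central square. It is four curvilinear regions bounded by the hyperbola, each of diameter about $\kappa/\delta$.
\item Wide strips are infeasible. Even the minimal choice $\delta=\sqrt{\kappa}$ requires $\la qx\ra\ge\eta/\sqrt{Q_m}$ for all $q\in(Q_{m-1},Q_m]$. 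Dirichlet's theorem with $N=Q_m$ gives a denominator with $\la qx\ra<1/Q_m$; if it is at most $Q_{m-1}$, pass to a multiple in $(Q_{m-1},2Q_{m-1}]$. This shows no $x$ survives once $Q_m\gg Q_{m-1}^2$.
\item Running the one-dimensional $\Bad$ argument in each coordinate only gives $\la qx\ra\la qy\ra\gg q^{-2}$. That is far below $\eta^2/Q_m$ when $\sqrt{Q_m}\ll q\le Q_m$; for $q\asymp Q_m$ the requirement has Littlewood strength.
\end{itemize}
The paper handles exactly this region with Lemma \ref{L:WcP}. With $c=\eta\sqrt{q/Q_m}$, the cross is covered by two thin strips of width $c/q^2\le\eta/q^2$, which are cheap by the one-dimensional Lemma \ref{L:simplex1}. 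The rest is covered by the continuum of boxes $\Delta_c(P;s)$ of size $cq^{-1-s}\times cq^{-2+s}$, $s\in[0,1]$, ranging from horizontal to vertical needles.

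The second missing piece is how to pay for these boxes. Your step (ii) asserts a uniformly bounded count, but lacunarity of $(Q_m)$ does not make the denominators lacunary: every $q\in(Q_{m-1},Q_m]$ occurs. The paper's key idea has three parts:
\begin{itemize}
\item Attach to each $(P,s)$ a rational line $L(P;s)$ through $P$ with $|A|\le q^s$ and $|B|\le q^{1-s}$ (Lemma \ref{L:attachline}).
\item Measure the scale by the height $H(P;s)=q\max\{|A|,|B|\}$ rather than by $q$.
\item Prove the multi-weight simplex lemma (Lemma \ref{L:simplex0}). All boxes with $q\in(Q/R,Q]$ and $H(P;s)\in[H,RH)$ meeting a ball of radius $\ll 1/H$ lie in two neighbourhoods of width $\asymp c_*/H$, one line for $s\le\frac12$ and one for $s\ge\frac12$. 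These lines are in general not axis-parallel.
\end{itemize}
Even then, a single height class $j$ involves up to about $\log(Q_m/Q_{m-1})/\log R$ denominator blocks $k$. Each block gives two lines of width $\asymp\eta R^{-k/2-j}$. This is summable but not bounded. That is why the paper plays the hyperplane potential game and deletes class $j$ at the first turn Bob's radius drops below $\theta R^{-j}$ (Propositions \ref{P:potential1}, \ref{P:potential2}).

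The growth condition enters through Proposition \ref{P:potential1}. The low-height sets at the start of stage $m$ cost about $(\theta R(Q_mQ_{m-1})^{-1/2})^{\gamma}$. This must be at most $(\beta\rho)^{\gamma}$ with $\rho\gtrsim Q_{m-1}^{-2}$, which forces $Q_m\gtrsim Q_{m-1}^3$. None of this structure appears in your proposal, so it identifies the difficulty but does not yet prove the theorem.
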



For comparison, we denote the set of counterexamples to \eqref{E:LC} by
    \begin{equation}\label{E:noLC}
		E':=\left\{
        (x,y)\in\RR^2: \liminf_{q\to +\infty}q\cdot\la qx\ra\la qy\ra>0
        \right\}.
	\end{equation}
It follows from preceding discussion that $\dimH{E'}=0$. 
Note that (see Lemma \ref{L:HAW-mnfd}, (iv) and (ii)):
\begin{itemize}
    \item  the complement of any set with zero Hausdorff dimension is HAW; 
    \item HAW sets are stable under countable intersections.
\end{itemize}
So in view of Theorem \ref{T:main}, the next corollary is immediate:

\begin{corollary}\label{C:LCnotULC}
    The difference set $$
    E\setminus E'=\left\{
    (x,y)\in\RR^2: (x,y)\text{ satisfies }\eqref{E:LC} \text{ but not }\eqref{E:ULC}
    \right\}
    $$ 
    is HAW on $\RR^2$. In particular, it has full Hausdorff dimension in $\RR^2$.
\end{corollary}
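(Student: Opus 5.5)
The corollary follows from Theorem \ref{T:main} together with the two properties of HAW sets listed just before it (Lemma \ref{L:HAW-mnfd}, (iv) and (ii)).

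First, I would check that the set in question is exactly $E\setminus E'$, i.e.\ $E\cap(\RR^2\setminus E')$. A pair satisfies \eqref{E:LC} precisely when $\liminf_{q\to+\infty} q\la qx\ra\la qy\ra=0$, which is the defining condition for the complement of $E'$. It fails \eqref{E:ULC} precisely when the $\limsup$ in \eqref{E:ULC} is positive; note that this $\limsup$ is always nonnegative. That positivity is the defining condition for $E$. So the set equals $E\cap(\RR^2\setminus E')$.

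Second, by \cite{EKL06} the set $E'$ has zero Hausdorff dimension, as recalled in the introduction. Lemma \ref{L:HAW-mnfd}(iv) then shows that $\RR^2\setminus E'$ is HAW on $\RR^2$. Theorem \ref{T:main} shows that $E$ is HAW on $\RR^2$. By Lemma \ref{L:HAW-mnfd}(ii), HAW sets are closed under countable intersections, and in particular under intersections of two sets. Hence $E\setminus E'$ is HAW.

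Finally, full Hausdorff dimension follows from the standard fact that HAW subsets of $\RR^2$ are winning and therefore have full dimension. This is the same implication already used in the "in particular" clause of Theorem \ref{T:main}, so it also appears in Lemma \ref{L:HAW-mnfd}.

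There is no real obstacle here. The only point needing care is that Lemma \ref{L:HAW-mnfd}(iv) genuinely requires only zero Hausdorff dimension of $E'$, and not, for example, countability. This holds because an HAW game can avoid any set of dimension zero: Alice removes the hyperplane neighbourhoods that cover the pieces of a cover of $E'$ whose total dimension-zero content is small.
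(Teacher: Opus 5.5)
Your proposal is correct and is essentially the paper's argument: $E\setminus E'=E\cap(\RR^2\setminus E')$, where $\RR^2\setminus E'$ is HAW by $\dimH E'=0$ \cite{EKL06} and Lemma \ref{L:HAW-mnfd}(iv), $E$ is HAW by Theorem \ref{T:main}, the intersection is HAW by Lemma \ref{L:HAW-mnfd}(ii), and full dimension follows from Lemma \ref{L:HAW-mnfd}(i). Your heuristic for (iv) is also sound in the hyperplane potential game (equivalent to HAW by Lemma \ref{L:HAWHPW}): for each $\gamma>0$, Alice can cover $E'$ in a single move by line neighbourhoods of total $\gamma$-potential at most $(\beta\rho_0)^\gamma$.
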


Corollary \ref{C:LCnotULC} gives an affirmative answer to \cite[Question 1.5 (ii)]{KW25} in terms of Hausdorff dimension. Moreover, we may intersect $E\setminus E'$ with more HAW sets, and the resulting set remains HAW and hence has full Hausdorff dimension. For example, given a pair $(s,t)$ of nonnegative numbers with $s+t=1$, we recall that (see \cite{An16,NS14}) the set of $(s,t)$-badly approximable vectors \begin{equation}\label{E:bad-definition}
    \Bad(s,t):=\left\{
        (x,y)\in\RR^2: \inf_{q\in\NN}\max\{q^{s}\la qx\ra,q^t\la qy\ra\}>0
        \right\}
\end{equation}
is HAW on $\RR^2$. Then we have the following corollary:

\begin{corollary}\label{C:BadLCnotULC}
    Let $(s_n,t_n)_{n=1}^{+\infty}$ be a sequence of pairs of nonnegative numbers with $s_n+t_n=1$. Then the set $$
    (E\setminus E')\cap \bigcap_{n=1}^{+\infty}\Bad(s_n,t_n)
    $$
    is HAW on $\RR^2$. In particular, it has full Hausdorff dimension in $\RR^2$.
\end{corollary}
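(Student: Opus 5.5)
The plan is to obtain Corollary \ref{C:BadLCnotULC} from Theorem \ref{T:main}, the HAW property of each $\Bad(s_n,t_n)$, and the two stability properties of HAW sets listed after \eqref{E:noLC} (Lemma \ref{L:HAW-mnfd}, (ii) and (iv)). No new Diophantine input is needed; the work is in writing the target set as a countable intersection of HAW sets.

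First, I rewrite the set as
$$
(E\setminus E')\cap \bigcap_{n=1}^{+\infty}\Bad(s_n,t_n)=E\cap(\RR^2\setminus E')\cap\bigcap_{n=1}^{+\infty}\Bad(s_n,t_n).
$$
The right-hand side is a countable intersection, with index set $\{E,\ \RR^2\setminus E'\}\cup\{\Bad(s_n,t_n):n\in\NN\}$. I then check that each factor is HAW on $\RR^2$:
\begin{itemize}
\item $E$ is HAW by Theorem \ref{T:main}.
\item $\RR^2\setminus E'$ is HAW because $\dimH E'=0$ by Einsiedler--Katok--Lindenstrauss \cite{EKL06}, and Lemma \ref{L:HAW-mnfd}(iv) says the complement of a set of zero Hausdorff dimension is HAW.
\item Each $\Bad(s_n,t_n)$ is HAW by \cite{An16,NS14}, as recalled just before the statement. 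This holds for every pair of nonnegative weights with $s_n+t_n=1$, including the degenerate pairs $(1,0)$ and $(0,1)$.
\end{itemize}

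Second, I apply Lemma \ref{L:HAW-mnfd}(ii), stability of HAW under countable intersections, to conclude that the whole set is HAW on $\RR^2$.

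Third, the full Hausdorff dimension statement follows from the standard fact, recorded among the HAW properties in Section \ref{S:Schmidt}, that HAW subsets of $\RR^2$ are winning and hence have Hausdorff dimension $2$.

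I do not expect a real obstacle. The only point needing care is that the HAW property of $\Bad(s,t)$ is used uniformly over all weight pairs, including the boundary weights. If the cited results only covered $s,t>0$, I would handle the boundary case separately: for example $\Bad(1,0)=\{x:\inf_q q\la qx\ra>0\}\times\RR$, which is HAW as the product of the one-dimensional badly approximable set (itself HAW on $\RR$) with $\RR$. I would verify this last product claim directly through the hyperplane game.
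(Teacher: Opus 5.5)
Your proposal is correct and matches the paper's argument: the set is the countable intersection of $E$ (HAW by Theorem \ref{T:main}), of $\RR^2\setminus E'$ (HAW by Lemma \ref{L:HAW-mnfd}(iv), since $\dimH E'=0$ by \cite{EKL06}), and of the sets $\Bad(s_n,t_n)$ (HAW by \cite{An16,NS14}). Countable-intersection stability and thickness of HAW sets then give the conclusion. Your contingency for the boundary weights is not needed, since the paper cites the HAW property for all nonnegative pairs with $s+t=1$, and it also uses $\Bad(0,1)=\RR\times\Bad$ as an HAW set in Appendix \ref{appendix1}.
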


\subsection{A refined version of Theorem \ref{T:main} for fixed sequences}\label{SS:refined}
In this subsection, we develop a general framework that incorporates both Littlewood's conjecture and its uniform version. From this perspective, one may see why our results provide quantitative insights into Littlewood's conjecture.

Let $\bQ=(Q_m)_{m\geq 0}$ be a strictly increasing sequence of positive real numbers with $Q_0>1$ and $Q_m\to\infty$. We consider the following set: 
\begin{equation}\label{E:EQ}
    E_{\bQ}:=\left\{
(x,y)\in\RR^2:\liminf_{m\to +\infty}Q_m\cdot \min_{1\leq q\leq Q_m}\la qx\ra\la qy\ra>0
\right\}.
\end{equation}
As shown in Lemma \ref{L:LCULC}, for any such sequence $\bQ$ one has $$
E'\subseteq E_{\bQ}\subseteq E.
$$
We have seen from Section \ref{SS:LCULC} that \begin{equation}\label{E:contrast}
    \dimH{E'}=0,\qquad \text{while}\qquad \dimH{E}=2\quad\text{(in fact, $E$ is HAW on $\RR^2$)}.
\end{equation}
So it is tempting to know if one could reinterpret \eqref{E:contrast} in terms of properties of the intermediate sets $E_{\bQ}$. Motivated by this, we propose the following question: 

\begin{problem}\label{P:growth}
Given a growth rate of the sequence $\bQ$, what is $\dimH{E_{\bQ}}$?
\end{problem}

For example, when this sequence $\bQ$ has bounded ratios, one must have $E'=E_{\bQ}$ (see Lemma \ref{L:LCULC}), whose Hausdorff dimension is zero. On the other hand, when this sequence $\bQ$ grows \textit{doubly exponentially}, we are able to prove the following result, along the lines of the proof of Theorem \ref{T:main}: 

\begin{theorem}\label{T:1+epsilon}
    Let $\bQ=(Q_m)_{m\geq 0}$ be any strictly increasing sequence of positive real numbers with $Q_0>1$ and $Q_m\to\infty$, satisfying \begin{equation}\label{E:1+epsilon}
        \inf_{m\geq 0}\frac{Q_{m+1}}{Q_m^{\tau}}>0\quad \text{for some }\tau>1.
    \end{equation}
    Then $E_{\bQ}$ is HAW on $\RR^2$. In particular, $\dimH{E_{\bQ}}=2$.
\end{theorem}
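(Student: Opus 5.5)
Fix $\tau>1$ and $c_0>0$ with $Q_{m+1}\ge c_0Q_m^{\tau}$ for all $m$. We play the hyperplane absolute game on $\RR^2$ with parameter $\beta\in(0,1/3)$: Bob picks balls $B_i$ of radii $\rho_i$, and Alice deletes $\beta\rho_i$-neighbourhoods of lines. By Lemma \ref{L:HAW-mnfd} we may work inside a fixed bounded box. We also assume $Q_0$ is large, since replacing $\bQ$ by a tail does not change $E_{\bQ}$.

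The target is a constant $c=c(\beta,\tau,c_0)>0$ such that every $(x,y)$ in the outcome satisfies $\la qx\ra\la qy\ra\ge c/Q_m$ for all $1\le q\le Q_m$ and all large $m$. Write $\cR_m$ for the set of \emph{dangerous regions} at level $m$. For each $q\le Q_m$, the set $\{\la qx\ra\la qy\ra<c/Q_m\}$ lies near the grid lines $x=a/q$ and $y=b/q$. More precisely, it is contained in the union of hyperbolic cross neighbourhoods $\{|qx-a|\,|qy-b|<c/Q_m\}$ around the rational points $(a/q,b/q)$. Each such cross is covered by the two strips $|x-a/q|<\delta/q$ and $|y-b/q|<c/(\delta qQ_m)$, taken over dyadic splittings of $\delta$ between $\sqrt{c/Q_m}$ and $1$. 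The points of a cross that are not near the central point $(a/q,b/q)$ lie in thin neighbourhoods of the single vertical or horizontal line through it, and such neighbourhoods are exactly what Alice removes with hyperplane moves. So the strategy reduces to two tasks. First, avoid neighbourhoods of the lines $x=a/q$ and $y=b/q$ of widths dictated by the cross. Second, handle the bulk near $(a/q,b/q)$, which we treat as avoiding a point, that is, a degenerate hyperplane.

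The key counting step uses the growth hypothesis \eqref{E:1+epsilon}. Fix $m$ and consider the stage of the game when Bob's radius $\rho$ is comparable to the natural scale of level-$m$ obstacles that can be problematic. The relevant lines $x=a/q$ with $q\le Q_m$ are $1/Q_m^2$-separated. Inside a ball of radius $\rho$, the number of distinct obstacle lines of a given width scale $\sim c/(qQ_m)$ is bounded once we group denominators dyadically, $q\sim 2^k$. Rational vertical lines with $q\sim 2^k$ are $\gtrsim 2^{-2k}$-separated, so a ball of radius $\rho\approx 2^{-2k}$ meets $O(1)$ of them. The thickness needed, $c/(2^kQ_m)$, is at most $\beta\rho$ exactly when $2^k\lesssim \beta Q_m/c$. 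The remaining range $2^k\in[\beta Q_m/c,\,Q_m]$ contains only $O(\log(1/\beta c))$ dyadic blocks, and there the grouping still works with a constant-size loss. So at each radius scale Alice faces only $O(1)$ lines (per dyadic block), and she handles them one per turn by waiting $N=N(\beta)$ turns per scale. This is the standard HAW scheme for $\Bad(s,t)$ from \cite{An16,NS14}, applied coordinatewise.

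The role of $\tau>1$ is to make the levels $m$ interact only boundedly. Level $m$ demands attention at radii between about $Q_m^{-2}$ and $Q_{m-1}^{-2}$, up to constants. For $m'\ne m$ the active scale windows overlap only in bounded multiplicity, since $\log Q_m$ grows geometrically. Hence the total demand at any radius scale is bounded by $C(\tau)$ tasks, and Alice can schedule all of them.

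I expect the main obstacle to be the "central" part of the cross, where both $\la qx\ra$ and $\la qy\ra$ are moderately small. There the danger region is not a thin neighbourhood of one line. It is a hyperbolic star whose arms have widths varying continuously, and covering it by dyadic strips multiplies the number of obstacles by $\log(Q_m)$, which breaks the bounded-count argument. I would try to avoid this loss with a Dani-type reduction. The dangerous $q$ at level $m$ correspond to vectors of the lattice $g_t u_{x,y}\ZZ^3$ lying in a region of small product. Using the growth of $Q_m$, I would show that only $O(1)$ primitive $q$ per level, namely the best approximation denominators near $Q_m$, can produce small products after the earlier Alice moves have already secured $(x,y)\in\Bad$-type bounds from Corollary \ref{C:BadLCnotULC}. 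If so, each level contributes only boundedly many lines and points to avoid, and the scheduling argument closes the proof. Full Hausdorff dimension then follows from Lemma \ref{L:HAW-mnfd}.
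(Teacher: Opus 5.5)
\textbf{There is a genuine gap, and it sits exactly where the content of the theorem lies.}

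The part of your scheme that you actually carry out is the arms of the cross. These are neighbourhoods of width $\asymp c/(qQ_m)$ of the lines $x=a/q$ and $y=b/q$, and avoiding them amounts to $\la qx\ra\gg 1/Q_m$ (or $\la qy\ra\gg1/Q_m$) away from the centre. That already follows from $x,y\in\Bad$ and never uses the growth of $\bQ$. But growth must enter essentially: if $Q_{m+1}/Q_m$ is bounded, then $E_{\bQ}=E'$ by Lemma~\ref{L:LCULC}, which has Hausdorff dimension zero and so is not HAW. Hence everything rests on the central hyperbolic region, and there you only offer a conditional programme (``If so, \ldots'').

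That region is neither a point nor a thin neighbourhood of one line. It is the union over $s\in[0,1]$ of rectangles with side lengths $c'q^{-1-s}$ and $c'q^{-2+s}$, where $c'\asymp\sqrt{cq/Q_m}$. So it is not small relative to any single radius of Bob's, and it has no preferred direction. Your proposed fix is that, once $\Bad$-type bounds are secured, only $O(1)$ denominators per level can give small products. Nothing supports this:
\begin{itemize}
\item $\Bad(s,t)$ conditions bound $\max\{q^s\la qx\ra,q^t\la qy\ra\}$, with constants depending on $(s,t)$. They do not control $\la qx\ra\la qy\ra$ at the scale $1/Q_m$. For instance, $\Bad(\frac12,\frac12)$ together with $x,y\in\Bad$ gives only $\la qx\ra\la qy\ra\gg q^{-3/2}$, which is useless for $q\gg Q_m^{2/3}$.
\item For a Lebesgue-random point, the expected number of $q\le Q_m$ with $\la qx\ra\la qy\ra<c/Q_m$ is $\asymp c\log Q_m$. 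So the $\log Q_m$ loss you noticed is genuine, and a bounded count per level is essentially the statement to be proved.
\item Corollary~\ref{C:BadLCnotULC} itself rests on Theorem~\ref{T:main}; only the $\Bad(s,t)$ input is available independently.
\item Your explanation of the role of $\tau$ does not identify a mechanism. The windows $[Q_m^{-2},Q_{m-1}^{-2}]$ are essentially disjoint for every increasing sequence, so ``bounded overlap'' costs nothing.
\end{itemize}

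\textbf{The missing ideas are the paper's three devices for the central part.}

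First, one covers $\{q|qx-p||qy-r|<c(P)^2\}$, with $c(P)=\eta\sqrt{q/Q_m}$, by the boxes $\Delta_{c(P)}(P;s)$ for $s\in[0,1]$, plus two strips (Lemma~\ref{L:WcP}). To each $(P,s)$ one attaches a rational line with $|A|\le q^s$, $|B|\le q^{1-s}$, and a height $H(P;s)$.

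Second, a multi-weight simplex lemma (Lemma~\ref{L:simplex0}) applies to boxes with denominators in one $R$-adic block and heights in one $R$-adic block that meet a ball of radius $\rho$ with $\rho H$ small. It shows their centres lie on at most two rational lines, so the whole continuum of $s$ collapses into two line neighbourhoods.

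Third, the number of such blocks is still unbounded; this is your $\log Q_m$. That is why the paper plays the hyperplane \emph{potential} game rather than the absolute game with one line per turn (the two are equivalent by Lemma~\ref{L:HAWHPW}). In the potential game Alice may delete all these neighbourhoods in a single turn, as long as their total $\gamma$-potential is at most $(\beta\rho)^\gamma$.

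The growth hypothesis enters at this step. At the first turn of stage $m$, Alice removes all small-height pieces at once. Proposition~\ref{P:potential1} then needs $Q_mQ_{m-1}\gg\rho^{-2}$. Since the radius at the start of stage $m$ is $\gg Q_{m-1}^{-2}$, this is guaranteed by $Q_m\gg Q_{m-1}^3$ (Lemma~\ref{L:cubic}). General $\tau>1$ then follows by splitting $\bQ$ into $k$ interleaved subsequences with $\tau^k>3$ and intersecting the finitely many resulting HAW sets.
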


So we summarize here the above answers to Problem \ref{P:growth}: \begin{equation*}
    Q_m\nearrow +\infty\text{ and }Q_{m+1}\asymp Q_m^{\tau}\Longrightarrow \begin{cases}
        \dimH{E_{\bQ}}=0, &\text{ if }\tau=1\\
        E_{\bQ}\text{ is HAW}, &\text{ if }\tau>1
    \end{cases}.
\end{equation*}
An important class of remaining unsolved cases consists of sequences for which \begin{equation}\label{E:lograte}
    \frac{Q_{m+1}}{Q_m}\to +\infty\quad\text{but}\quad \frac{\log{Q_{m+1}}}{\log{Q_m}}\to 1.
\end{equation}
This exactly corresponds to \cite[Problem 1]{Sch26}. For example, a natural growth rate of $\bQ=(Q_m)_{m\geq 0}$ satisfying \eqref{E:lograte} is \begin{equation}\label{E:lograte0}
Q_m\nearrow +\infty\quad\text{and}\quad Q_{m+1}\asymp Q_m(\log{Q_m})^{\lambda} \text{ for some }\lambda>0.
\end{equation}
The authors believe that this is worth investigating. 

Finally, we compare our general framework with those of \cite{Bad13,BV11,FK26}. Given a function $f:\NN\to \RR_+$, define the set\footnote{The name $\Mad$ was proposed by Badziahin and Velani in \cite{BV11} and stands for multiplicatively badly approximable.}  \begin{equation}
    \Mad(f):=\{(x,y)\in\RR^2:\liminf_{q\to +\infty}qf(q)\cdot \la qx\ra\la qy\ra>0
    \}.
\end{equation}
For a strictly increasing sequence $\bQ=(Q_m)_{m\geq 0}$ tending to infinity, if we set \begin{equation*}
    f_{\bQ}(q):=\frac{\min\{Q_m:m\geq 1,\;q\leq Q_m\}}{q},
\end{equation*}
then it is straightforward to see that \begin{equation}
    E_{\bQ}=\Mad(f_{\bQ}).
\end{equation}
In particular, when the sequence $\bQ$ satisfies \eqref{E:lograte0}, one has $E_{\bQ}\subseteq \Mad((\log{})^{\lambda}).$



\subsection{Winning property on $C^2$ curves and lines}\label{SS:curve-results}

The HAW property of the sets $E$ and $E_{\bQ}$ established above shows their largeness in several ways; for instance, HAW sets are known to have full Hausdorff dimension on certain fractals (see, for example, \cite{BHNS25,BFKRW12}). In this section we prove that these sets are \textsl{absolutely winning} (this property is in general stronger than HAW, but they are equivalent in the one-dimensional case) on $C^2$ curves. One may see \cite{Mc10} for the original McMullen's absolute game and its winning sets.

Historically, there has been considerable interest in studying winning properties of Diophantine sets restricted to curves. The origins of this direction may be traced back to Davenport \cite{Dav64}; we refer to \cite{BV14} for a detailed historical account and further discussion.

The absolute winning property of badly approximable vectors on curves, including $\Bad(s,t)$ and its higher-dimensional generalization, has been established in \cite{ABV18,BNY22}. To the best of the authors' knowledge, no analogous result was obtained in the multiplicative setting. The main theorem of this section provides the first result of this kind.

Throughout this paper, by a regular $C^2$ curve $\mathcal C\subset\RR^2$ we mean a one-dimensional embedded $C^2$ submanifold. We will denote the curvature of $\mathcal{C}$ at the point $z \in \mathcal{C}$ by $\kappa(z)$.

\begin{theorem}\label{T:main-curves}
Let $\bQ=(Q_m)_{m\geq0}$ be as in Theorem \ref{T:1+epsilon}, and let $\mathcal C\subset\RR^2$ be a regular $C^2$ curve
satisfying that the set $\{ z \in \mathcal{C}: \kappa(z) \neq 0 \}$ is absolutely winning on $\mathcal{C}$.
Then the sets $E\cap\mathcal C$ and $E_{\bQ}\cap \mathcal{C}$ are absolutely winning on $\mathcal{C}$. 
In particular, their intersection with every nonempty open arc of $\mathcal C$ has Hausdorff dimension one.
\end{theorem}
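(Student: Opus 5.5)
Since $E_{\bQ}\subseteq E$ for every admissible $\bQ$ (Lemma \ref{L:LCULC}), and absolute winning sets are closed under supersets, the whole statement reduces to showing that $E_{\bQ}\cap\mathcal C$ is absolutely winning on $\mathcal C$ for a fixed $\bQ$ satisfying \eqref{E:1+epsilon}. Also, the intersection of two absolutely winning sets on $\mathcal C$ is again absolutely winning. Hence it suffices to prove that $E_{\bQ}\cap\mathcal C\supseteq W\cap\mathcal C'$, where $\mathcal C'=\{z\in\mathcal C:\kappa(z)\neq 0\}$ and $W$ is a set constructed as in the proof of Theorem \ref{T:1+epsilon}. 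Working locally, and using countable stability together with the fact that absolute winning on an arc is a local property, we fix a compact arc $\gamma\subset\mathcal C$ on which we only need to handle points of nonzero curvature. Alice's strategy then has two components. She plays the winning strategy for $\mathcal C'$, so the outcome lies in $\mathcal C'$. Interleaved with this, she deletes dangerous neighborhoods coming from the Diophantine structure. This interleaving is done via the standard trick of running two strategies on alternating scales, or equivalently by using the intersection property. One point needs care: the outcome point has $\kappa\ne0$, but the game balls may contain points of zero curvature. We handle this by noting that once the game ball is small enough around a point of $\mathcal C'$ with $|\kappa|\ge c$, curvature stays bounded below by continuity. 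To make this work we restrict to the countably many sets $\{|\kappa|\ge 1/n\}$ intersected with compact arcs.

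Next we recall the mechanism of Theorem \ref{T:1+epsilon}. For each $m$ we need some $1\le q\le Q_m$ and all $q\le Q_m$ to satisfy $\la qx\ra\la qy\ra\gg Q_m^{-1}$. In the planar HAW argument this is achieved by showing that, at scale $\rho_m$ (chosen with $Q_m$), the bad set is a union of neighborhoods of lines $\{qx-p=0\}$ or $\{qy-r=0\}$ of width about $c/(qQ_m)$. It also uses hyperbolic regions where both $\la qx\ra$ and $\la qy\ra$ are small, and these are covered by neighborhoods of lines $qx=p$ or $qy=r$ in whichever coordinate is more restrictive. The HAW strategy deletes one hyperplane neighborhood per turn, and a counting (simplex-lemma type) argument shows that at each scale only boundedly many such lines meet a given ball. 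The growth $Q_{m+1}\gg Q_m^\tau$ is used to separate scales, so that each $m$ gets a block of turns. On $\mathcal C$ the deleted sets become intersections of the curve with neighborhoods of vertical and horizontal lines. We must show that each such intersection is contained in a bounded number of intervals (balls on $\mathcal C$) of radius comparable to $\beta\times$ the current ball radius. For absolute winning on a one-dimensional curve, Alice may remove one such ball per move.

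The main obstacle is this transversality step. A line neighborhood $\{|x-p/q|<\delta\}$ intersects an arc of $\mathcal C$ in a set whose length can be much larger than $\delta$ when the arc is tangent to the line, i.e.\ where $\mathcal C$ has a vertical or horizontal tangent. Nonzero curvature is exactly what controls this. Near a point with vertical tangent and curvature $\ge c$, the curve is locally $x=x_0+\tfrac{\kappa}{2}(y-y_0)^2+o(\cdot)$. A $\delta$-strip therefore meets it in at most two intervals of length $\lesssim\sqrt{\delta/c}$. Away from tangencies the length is $\lesssim\delta$. Accordingly, I would rebalance the parameters: we take the number of turns per scale $m$ large enough, relative to $\tau$, that $\sqrt{\delta}$-size removals still fit, namely $\sqrt{c/(qQ_m)}\le\beta\rho$ at the relevant turn. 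This costs only a change of constants because of the doubly exponential gaps. Beyond that, the counting of how many rationals $p/q$ with $q\le Q_m$ (and their product regions) can interfere within one curve-ball must be redone along the curve. I would use that distinct $p/q$ with $q\sim Q$ are $\ge Q^{-2}$ separated, so a ball of radius $\rho\ll Q^{-2}$ on $\mathcal C$ meets at most two strips of each type (two because of the parabola's two branches). Then two or four removals per turn suffice. Finally, the Hausdorff dimension statement follows from the standard fact that absolutely winning subsets of a $C^1$ curve have full dimension on each open arc.
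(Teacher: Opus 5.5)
There is a genuine gap, and it begins with a misreading of the planar strategy. The neighborhoods deleted in the proofs of Theorems~\ref{T:main} and~\ref{T:1+epsilon} are not only strips around $\{qx=p\}$ and $\{qy=r\}$. The Type~0 boxes $\Delta_c(P;s)$ are grouped by the multi-weight simplex lemma (Lemma~\ref{L:simplex0}) onto attached lines $L(P;s)\colon Ax+By+C=0$ of arbitrary rational slope. Only the Type~1, Type~2 and initial deletions are axis-parallel. Re-covering the Type~0 boxes by axis-parallel strips is not an option: it costs about $\rho q$ strips of width $\asymp \eta/(q\sqrt{Q_m})$ for every $q\le Q_m$, which the potential budget cannot afford for small $\gamma$.

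Consequently your transversality step treats the wrong tangency. Tangency to horizontal or vertical lines occurs only on $Z=\{X'Y'=0\}$, which is discrete inside $\{\kappa\neq0\}$. The paper simply excises it: it works on intervals $J$ with $\overline J\cap Z=\varnothing$ and then applies Lemma~\ref{L:HAW-open}, so no $\sqrt\delta$ argument is needed there. The real obstruction is a deleted line with $AB\neq0$ that is nearly tangent to $\mathcal C$ inside the current ball. Its $r$-tube pulls back to an arc of length up to $\asymp\sqrt{r/|\kappa|}$. Since $r$ may be comparable to $\beta\rho_n$, this arc can be far longer than Bob's whole ball. Removing it at an earlier turn with $\rho\asymp\sqrt r$, as you suggest, requires bounding the number of nearly tangent rational lines of the relevant heights that meet a ball of that size. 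Your separation argument for fractions $p/q$ says nothing about such lines. Your scale bookkeeping is also off: stage $m$ starts with balls of radius roughly $Q_{m-1}^{-2}$, far larger than $Q_m^{-2}$, and must handle all heights in between; this is the role of the families $\cD^s$ and $\cD^c_j$.

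The missing idea is the paper's arithmetic dichotomy. The ambient strategy of Lemma~\ref{L:cubic} is run so that every deleted neighborhood satisfies $r\,\xi\,|AB|<\varepsilon$, a consequence of \eqref{E:simplex-label}. On an interval where $m\le|X'|,|Y'|\le M$, Lemma~\ref{L:tube-pullback} gives two cases:
\begin{itemize}
\item Either $|AX'+BY'|\ge \xi m/2$ on Bob's interval. Then the pullback is a single interval of radius at most $2\sqrt2\,r/m$, so the one-dimensional potential is controlled by the ambient one.
\item Or the line is nearly tangent. This forces $|AB|>\xi^2m/(2M)$ and hence $\xi^2|AX+BY+C|<c$ on the whole tube, i.e.\ the tube lies outside $D_c$.
\end{itemize}
Thus $\phi^{-1}(E_{\bQ})\cup(\RR\setminus D_c)$ is winning for every $c>0$. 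One then intersects these over $c=1/n$ with $D=\phi^{-1}(\Bad(\frac12,\frac12))$, which is winning (indeed HAW) on curves of nonzero curvature by the theorem of An, Beresnevich and Velani. This Diophantine input, not a local Taylor expansion, is where the curvature hypothesis really enters.

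Finally, this transfer is run under $Q_{m+1}/Q_m^3\to\infty$: stage $m$ ends at radius $\asymp Q_m^{-2}$, and the next stage needs $Q_{m+1}\gtrsim Q_m^3$. General $\tau>1$ is recovered by splitting $\bQ$ into $k$ residue-class subsequences with $\tau^k>3$ and intersecting. It is not a matter of adjusting the number of turns per stage, which Alice does not control.
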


The assumption of the above theorem holds if the set of points with zero curvature has zero Hausdorff dimension, e.g. is countable (see Lemma \ref{L:HAW-mnfd} (iv)).



We note that Theorem \ref{T:main-curves} 
is independent of Theorems \ref{T:main} and \ref{T:1+epsilon}. In fact, HAW property on $\RR^2$ does not imply HAW property on curves, and vice versa, HAW property on each curve does not imply HAW on $\RR^2$. For counterexamples in both directions, see Appendix \ref{appendix1}.

\smallskip
At the end of this section, we study the sets $E$ and $E_{\bQ}$ on certain lines.

\begin{theorem}\label{T:lines}
    Let $\bQ=(Q_m)_{m\geq0}$ be as in Theorem \ref{T:1+epsilon}, and let
$\mathcal C = \{ (x,y) \in \RR^2: y = ax + b \}$.
\begin{enumerate}
    \item If $a \neq 0$ and for some $\varepsilon > 0$ one has
    \begin{equation}\label{E:ABV_condition}
    \liminf_{q\to +\infty}q^{2 - \varepsilon}\cdot \max \{ \la qa\ra, \la qb\ra \}>0,
    \end{equation}
    then the sets $E \cap \mathcal C$ and $E_{\bQ} \cap \mathcal C$ are absolutely winning on $\mathcal C$.
    \item If $a, b \in \QQ$, then $\mathcal C \cap E = \mathcal C \cap E_{\bQ} = \varnothing.$
\end{enumerate}
\end{theorem}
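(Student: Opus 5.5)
We prove the two parts separately: part (2) by a direct Dirichlet-type computation, and part (1) by running the absolute game of Theorem \ref{T:main-curves} with the curvature input replaced by the Diophantine hypothesis \eqref{E:ABV_condition}.

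\textbf{Part (2).} By Lemma \ref{L:LCULC} we have $E_{\bQ}\subseteq E$, so it suffices to show $E\cap\mathcal C=\varnothing$. Write $a=r/s$ and $b=t/s$ with $r,t\in\ZZ$ and $s\in\NN$. For a point $(x,ax+b)\in\mathcal C$ and $q=sk$ we have $sk(ax+b)=r(kx)+tk$. Hence
$$\la sk(ax+b)\ra=\la r(kx)\ra\le |r|\la kx\ra, \qquad \la skx\ra\le s\la kx\ra,$$
and therefore $\la qx\ra\la q(ax+b)\ra\le s|r|\,\la kx\ra^2$.

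Given a large $Q$, Dirichlet's theorem with $K=\lfloor Q/s\rfloor$ gives some $1\le k\le K$ with $\la kx\ra\le 1/K$. Then $q=sk\le Q$, and
$$Q\cdot\min_{1\le q\le Q}\la qx\ra\la q(ax+b)\ra\ \ll_{r,s}\ Q\cdot\frac{1}{K^{2}}\ \ll\ \frac1Q\longrightarrow 0 .$$
So the $\limsup$ in \eqref{E:noULC} vanishes and $(x,ax+b)\notin E$. If $a=0$, then $r=0$, the product is identically $0$ along $q=sk$, and the same conclusion holds trivially.

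\textbf{Part (1): overall strategy.} First, it suffices to treat $E_{\bQ}$. Indeed, $E\supseteq E_{\bQ'}$ for the auxiliary sequence $Q'_m=Q_0^{\tau^m}$, which satisfies \eqref{E:1+epsilon}. We parametrize $\mathcal C$ by $x$; the map $x\mapsto(x,ax+b)$ is bi-Lipschitz, so absolute winning on $\mathcal C$ is the same as absolute winning on $\RR$. I would follow the scheme of the proof of Theorem \ref{T:main-curves}.

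\textbf{Part (1): the bad set at level $m$.} Fix a small $c>0$. The set of $x$ violating the condition at level $m$ is
$$\bigcup_{q\le Q_m}\{x:\la qx\ra\la q(ax+b)\ra<c/Q_m\}.$$
Each piece lies in a union of two kinds of neighbourhoods:
\begin{itemize}
\item neighbourhoods of rationals $p_1/q$, where $\la qx\ra$ is small;
\item neighbourhoods of the shifted points $x=(p_2-qb)/(qa)$, where $\la q(ax+b)\ra$ is small.
\end{itemize}
The neighbourhood widths are governed by the hyperbolic shape: if $\la qx\ra\approx\delta$ then $\la q(ax+b)\ra$ must be $<c/(Q_m\delta)$. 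We split $q$ dyadically, $q\sim H\le Q_m$, and at the game scale $\rho\asymp H^{-2}$ (roughly) Alice deletes the dangerous intervals.

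\textbf{Part (1): the counting step.} For the first family, two distinct rationals with denominators $\sim H$ are $\gg H^{-2}$ apart, so a ball of radius $\asymp H^{-2}$ contains $O(1)$ of them. For the second family, two points $(p-qb)/(qa)$ and $(p'-q'b)/(q'a)$ differ by
$$\frac{(q'p-qp')-(q'-q)\,0\cdot b}{qq'a}\quad\text{corrected by terms in }(q'-q)b \text{ and } (q'-q)a .$$
Their closeness therefore forces $\la (q'-q)b\ra$, or a mixed quantity involving $\la (q'-q)a\ra$, to be $\ll H^{-2}$. The hypothesis \eqref{E:ABV_condition} rules this out up to the factor $H^{\varepsilon}$, exactly as in the treatment of $\Bad(s,t)$ on lines in \cite{ABV18}.

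\textbf{Part (1): closing the game.} The loss of $H^{\varepsilon}$ is absorbed by the gap $\tau>1$: the levels $m$ are sparse enough that, between consecutive $Q_m$, Alice has $\gg\log Q_m$ rounds available. She can therefore delete the $O(H^{\varepsilon})$ bad intervals per scale with polynomially many moves. This uses the absolute-game reduction (deleting boundedly many intervals at each of a bounded number of sub-rounds) already established in Section \ref{S:Schmidt}.

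\textbf{Main obstacle.} The hard part is the separation/counting of the second family, together with the interaction between the two families. The dangerous case is $x$ near $p_1/q$ with $\la q(ax+b)\ra$ only moderately small. Here I would use that $\la q(a\,p_1/q+b)\ra=\la ap_1+qb\ra$, and bound it from below via \eqref{E:ABV_condition} after clearing denominators. If this fails, I would first try replacing $\varepsilon$ by $\varepsilon/2$ and shrinking $c$ to gain room.
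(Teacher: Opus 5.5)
Your Part (2) is correct and is essentially the paper's argument, with Dirichlet's theorem along $q=sk$ in place of the convergents of $x$. Part (1), however, has a genuine gap.

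\emph{The counting step addresses an obstruction that does not exist.} Since $\frac{p-qb}{qa}=\frac1a\cdot\frac pq-\frac ba$ is an affine image of $p/q$, one has $\frac{p-qb}{qa}-\frac{p'-q'b}{q'a}=\frac{q'p-qp'}{qq'a}$ exactly. There are no terms in $(q'-q)b$ or $(q'-q)a$, so the second family is separated for free and \eqref{E:ABV_condition} plays no role there.

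\emph{The real obstruction lies within a single denominator}, which you only flag at the end. When $p_1/q$ and $(p_2-qb)/(qa)$ nearly coincide, i.e.\ the rational point $(p_1/q,p_2/q)$ lies within $\ll q^{-1}Q_m^{-1/2}$ of $\mathcal C$, the bad set is a single interval of length $\asymp q^{-1}Q_m^{-1/2}$. For $q$ close to $Q_m$ this is much longer than your game scale $H^{-2}$. By the simplex lemma such points line up on rational lines. When such a line is nearly parallel to $\mathcal C$, the resulting intervals occupy a long stretch of Bob's interval, so they are not boundedly many per scale.

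\emph{Your fallback does not work.} A lower bound for $\la ap_1+qb\ra$ does not follow from \eqref{E:ABV_condition}. That hypothesis restricts the simultaneous approximations $\max\{\la na\ra,\la nb\ra\}$, not the dual linear form $p_1a+qb-p_2$. Indeed, Dirichlet's theorem for linear forms gives $\la Aa+Bb\ra\leq\max\{|A|,|B|\}^{-2}$ for infinitely many integer pairs $(A,B)$, whatever $(a,b)$ is.

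\emph{The closing step is not a valid mechanism either.} The budget $\sum r^\gamma\leq(\beta\rho)^\gamma$ must hold for every $\gamma>0$, and Bob may shrink by the factor $\beta$ every turn. So $H^{\varepsilon}$ deletions of size comparable to the current scale cannot be paid for, no matter how many rounds lie between $Q_{m-1}$ and $Q_m$.

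The missing idea is to split the near-parallel configurations off rather than count them. The paper never analyses the line directly. Instead it pulls back Alice's ambient deletions from Lemma \ref{L:cubic}: neighbourhoods $L^{(r)}$ of rational lines $Ax+By+C=0$ with the structural bound $r\,\xi|AB|<\varepsilon$. By Lemma \ref{L:tube-pullback}, there are two cases:
\begin{enumerate}
\item A line transversal to $\mathcal C$ pulls back to an interval of radius $\ll r$, which is a legal one-dimensional deletion.
\item A nearly parallel line has $|AB|\gg\xi^2$. The structural bound then gives $\xi^3r\ll\varepsilon$, so every pulled-back parameter violates $\xi^2|AX+BY+C|\geq c$, i.e.\ lies outside $D_c$.
\end{enumerate}
Hence $\phi^{-1}(E_{\bQ})\cup(\RR\setminus D_c)$ is HAW for each $c>0$. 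What remains is that $D=\bigcup_c D_c=\phi^{-1}(\Bad(\frac12,\frac12))$ is HAW on the line. That is exactly the theorem of \cite{ABV18} for lines, and it is the only place where \eqref{E:ABV_condition} enters; $a\neq0$ is also used for the uniform derivative bounds. One then intersects $D$ with these sets for $c=1/n$. Finally, one passes from \eqref{E:cubic} to general $\tau>1$ by splitting $\bQ$ into residue classes and intersecting. Without this transversal/near-parallel dichotomy and the $\Bad(\frac12,\frac12)$ input, your outline does not produce a winning strategy.
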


The authors are not aware of results for the pairs $(a, b)$ not covered by Theorem \ref{T:lines}.

\subsection{Relationship to homogeneous dynamics} As is well known, Littlewood's conjecture and its uniform version can be reformulated in the language of homogeneous dynamics. To $(x,y)\in\RR^2$ one can attach a point $p_{x,y}$ in $X_3=\SL_3(\RR)/\SL_3(\ZZ)$: \begin{equation*}
    p_{x,y}=\begin{pmatrix}
        1 &0 &x\\ 0 &1 &y \\ 0 &0 &1
    \end{pmatrix}\cdot \SL_3(\ZZ),
\end{equation*}
and consider its orbit under left translations by elements in $$
A^+:=\{\diag(e^s,e^t,e^{-(s+t)}):s,t\geq 0\}.
$$
Using Mahler’s compactness criterion, it was shown in \cite{EKL06} that $(x,y)$ satisfies \eqref{E:LC} if and only if the orbit $A^+p_{x,y}$ is unbounded.

Furthermore, we consider a partition of $A^{+}$ into fibers 
\begin{equation*}
    L_T^{+}:=\left\{\diag(e^s,e^t,e^{-(s+t)}):s,t\geq 0,s+t=T\right\},
\end{equation*}
where $T$ runs through all non-negative real numbers. Then we say that the orbit $A^{+}p$, where $p\in X_3$, is \textsl{fiberwise divergent} if for every compact subset $K$ of $X_3$ there exists $T_0 > 0$ such that $L_T^+p\not\subseteq K$ for all $T > T_0$.  It was proved in  \cite{BFK25} that $(x,y)$ satisfies \eqref{E:ULC} if and only if the orbit $A^{+}p_{x,y}$ is fiberwise divergent. 

Besides, given a pair $(s,t)$ of nonnegative numbers with $s+t=1$, we also consider a ray inside $A^+$ of the form $$
F_{(s,t)}^+:=\{\diag(e^{su},e^{tu},e^{-u}):u\geq 0\}.
$$
It is well known that (see \cite{Da1, K98}) $(x,y)\in \Bad(s,t)$ if and only if the orbit $F_{(s,t)}^+p_{x,y}$ is bounded.

Therefore, using the local diffeomorphism $(x,y)\mapsto p_{x,y}$, we may translate Theorem \ref{T:main}, Corollary \ref{C:BadLCnotULC}, and Theorem \ref{T:main-curves} into corresponding dynamical statements on the two-dimensional subtorus of $X_3$: \begin{equation*}
    \TT:=\{p_{x,y}:(x,y)\in\RR^2\}.
\end{equation*}

\begin{theorem}
    The set $$
    \{
    p\in\TT: A^+p\text{ is not fiberwise divergent}
    \}
    $$
    is HAW on $\TT$. In particular, it has full Hausdorff dimension in $\TT$.
\end{theorem}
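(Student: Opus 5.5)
This statement is a direct translation of Theorem \ref{T:main}, so I will transfer the HAW property through the parametrization $\phi:(x,y)\mapsto p_{x,y}$. The plan has three steps.

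\textbf{Step 1: identify the set.} By the result of \cite{BFK25} quoted above, $(x,y)$ satisfies \eqref{E:ULC} if and only if $A^+p_{x,y}$ is fiberwise divergent. Hence
\begin{equation*}
\{p\in\TT: A^+p\text{ is not fiberwise divergent}\}=\phi(E),
\end{equation*}
where $E$ is the set in \eqref{E:noULC}.

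Two remarks make this identification well defined. First, $p_{x,y}$ depends only on $(x,y)$ modulo $\ZZ^2$, because right multiplication by the unipotent integer matrices with integer translation part stays in $\SL_3(\ZZ)$. Second, $E$ is $\ZZ^2$-invariant, since $\la q(x+k)\ra=\la qx\ra$ for integers $k,q$. So $\TT$ is a $2$-torus, $\phi$ factors through $\RR^2/\ZZ^2\to\TT$, and this induced map is a diffeomorphism onto $\TT$. Injectivity mod $\ZZ^2$ holds because two such unipotent matrices differ by an element of $\SL_3(\ZZ)$ only if their translation parts differ by an integer vector.

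\textbf{Step 2: transfer HAW.} HAW on a manifold is defined via charts, and it is invariant under $C^1$ diffeomorphisms; I take this from Lemma \ref{L:HAW-mnfd}, with the standard reference \cite{KW10}. I will use the local inverses of $\phi$ as charts for $\TT$. Take any open set $U\subset\RR^2$ on which $\phi$ is injective. By Theorem \ref{T:main}, $E$ is HAW on $\RR^2$, hence $E\cap U$ is HAW on $U$. Therefore $\phi(E\cap U)=\phi(E)\cap\phi(U)$ is HAW on $\phi(U)$. Since HAW on a manifold is a local property, $\phi(E)$ is HAW on $\TT$.

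\textbf{Step 3: dimension.} Full Hausdorff dimension, namely $2$, follows because HAW sets on a manifold have full dimension in every open subset (again from Lemma \ref{L:HAW-mnfd}). It also follows directly, since $\phi$ is locally bi-Lipschitz.

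\textbf{Main obstacle.} The only point needing care is that the HAW notion on $\TT$ is well defined and diffeomorphism-invariant. This holds because $\phi$ is in fact a local isometry onto a flat torus, up to the choice of Riemannian metric. If one prefers to avoid the general manifold machinery, one can use that $\phi$ is a local affine map: affine maps send hyperplane neighbourhoods to hyperplane neighbourhoods and balls to comparable balls, so the HAW game played on $\TT$ can be simulated directly in $\RR^2$.
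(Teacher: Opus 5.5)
Your proposal is correct and takes essentially the same route as the paper, which deduces this statement from Theorem \ref{T:main} using three ingredients: the equivalence from \cite{BFK25} between \eqref{E:ULC} for $(x,y)$ and fiberwise divergence of $A^+p_{x,y}$, the local diffeomorphism $(x,y)\mapsto p_{x,y}$, and the chart-wise definition and diffeomorphism invariance of HAW on manifolds (Lemma \ref{L:HAW-mnfd}). The details you add are precisely what the paper leaves implicit: the $\ZZ^2$-invariance of $E$, which is what gives $\phi(E)\cap\phi(U)=\phi(E\cap U)$, and the injectivity of $\phi$ modulo $\ZZ^2$.
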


\begin{corollary}
Let $(s_n,t_n)_{n=1}^{+\infty}$ be a sequence of pairs of nonnegative numbers with $s_n+t_n=1$. Then the set $$
    \left\{
    p\in\TT: 
    \begin{aligned}
    &A^+p\text{ is not fiberwise divergent and unbounded},\\
    &\text{and for all }n\geq 1,\;F^+_{(s_n,t_n)}p\text{ is bounded}
    \end{aligned}
    \right\}
    $$
    is HAW on $\TT$. In particular, it has full Hausdorff dimension in $\TT$.    
\end{corollary}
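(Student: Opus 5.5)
The corollary is a translation of Corollary \ref{C:BadLCnotULC} into dynamical language. I will identify the set in the statement with the image, under the chart $\phi:(x,y)\mapsto p_{x,y}$, of the set
$$
(E\setminus E')\cap\bigcap_{n=1}^{+\infty}\Bad(s_n,t_n),
$$
and then transfer the HAW property through $\phi$. Concretely, $\phi$ is a smooth covering map $\RR^2\to\TT$: it factors through $\RR^2/\ZZ^2$, since translating $(x,y)$ by an integer vector multiplies the matrix on the right by an element of $\SL_3(\ZZ)$. It is a local isometry for the natural flat structure on $\TT$, and HAW on $\TT$ is defined through such charts. So it suffices to check the dynamical conditions chart by chart. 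The sets $E$, $E'$ and $\Bad(s,t)$ are all $\ZZ^2$-periodic, because every condition involved depends only on $(x,y)$ modulo $\ZZ^2$. Hence the preimage of the dynamical set is exactly a $\ZZ^2$-periodic subset of $\RR^2$, and there is no ambiguity in pulling it back.

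The pointwise identifications come from the three equivalences recalled just before the statement.
\begin{itemize}
\item By \cite{EKL06}, $A^+p_{x,y}$ is unbounded if and only if $(x,y)$ satisfies \eqref{E:LC}, that is, if and only if $(x,y)\notin E'$.
\item By \cite{BFK25}, $A^+p_{x,y}$ is fiberwise divergent if and only if $(x,y)$ satisfies \eqref{E:ULC}. So "not fiberwise divergent" is equivalent to $(x,y)\in E$.
\item By \cite{Da1,K98}, $F^+_{(s_n,t_n)}p_{x,y}$ is bounded if and only if $(x,y)\in\Bad(s_n,t_n)$.
\end{itemize}
Combining these, $p_{x,y}$ lies in the set of the corollary exactly when $(x,y)$ lies in the set of Corollary \ref{C:BadLCnotULC}.

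Corollary \ref{C:BadLCnotULC} states that this set is HAW on $\RR^2$. I would then invoke the standard facts about HAW sets from Section \ref{S:Schmidt} (Lemma \ref{L:HAW-mnfd}). HAW is preserved by $C^1$ diffeomorphisms and is a local property on manifolds. Therefore a subset of $\TT$ is HAW as soon as its preimage in $\RR^2$ is HAW on every open set of a chart. Full Hausdorff dimension then follows either from the HAW property directly or by transporting the dimension bound of Corollary \ref{C:BadLCnotULC} through the local bi-Lipschitz chart.

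The only delicate step is the transfer of HAW to the manifold $\TT$. This requires the manifold version of the HAW definition and its diffeomorphism invariance. I expect this to be covered by Lemma \ref{L:HAW-mnfd}, so no real obstacle should remain.
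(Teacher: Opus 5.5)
Your proposal is correct and follows the paper's own argument: the paper gets this corollary by transporting Corollary \ref{C:BadLCnotULC} through the local diffeomorphism $(x,y)\mapsto p_{x,y}$, using the three dictionary equivalences from \cite{EKL06}, \cite{BFK25}, and \cite{Da1,K98}. Your remarks on $\ZZ^2$-periodicity and on checking the HAW property chart by chart via the definition in Section \ref{hawmfld} just make explicit details the paper leaves implicit.
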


\begin{theorem}
    Let $\mathcal C\subset\TT$ be a regular $C^2$ curve satisfying the assumption of Theorem \ref{T:main-curves}. Then the set $$
    \{
    p\in\mathcal{C}: A^+p\text{ is not fiberwise divergent}
    \}
    $$
    is HAW on $\mathcal{C}$. In particular, it has full Hausdorff dimension in $\mathcal{C}$.
\end{theorem}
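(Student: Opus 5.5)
This statement is the dynamical reformulation of Theorem \ref{T:main-curves}, so I would transfer that result through the parametrization $\Phi:(x,y)\mapsto p_{x,y}$ rather than reprove anything. The plan has three steps.

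\emph{Step 1: identify the sets.} By the equivalence from \cite{BFK25} recalled above, $(x,y)$ satisfies \eqref{E:ULC} if and only if $A^+p_{x,y}$ is fiberwise divergent. Hence
$$\{p\in\TT: A^+p \text{ is not fiberwise divergent}\}=\Phi(E).$$
The map $\Phi$ is $\ZZ^2$-periodic, since translating $(x,y)$ by an integer vector changes the unipotent matrix by an element of $\SL_3(\ZZ)$. It is also a local isometry from $\RR^2$ onto the flat torus $\TT\cong\RR^2/\ZZ^2$. Consistently, $E$ is $\ZZ^2$-invariant, because $\la qx\ra$ and $\la qy\ra$ depend only on $x,y \bmod 1$.

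\emph{Step 2: lift the curve.} A regular $C^2$ curve $\mathcal C\subset\TT$ lifts locally to regular $C^2$ curves $\tilde{\mathcal C}\subset\RR^2$, and these lifts are its $\ZZ^2$-translates. Since $\Phi$ is a local isometry, it preserves curvature. So the hypothesis that the nonzero-curvature points of $\mathcal C$ form an absolutely winning set transfers to each local lift. At this point I would need the fact that absolute winning on a one-dimensional manifold is local and invariant under local $C^1$ (here isometric) diffeomorphisms. This is among the properties collected in Lemma \ref{L:HAW-mnfd}, which I assume includes invariance under $C^1$ diffeomorphisms and the local nature of the property.

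\emph{Step 3: apply and push forward.} Theorem \ref{T:main-curves} shows that $E\cap\tilde{\mathcal C}$ is absolutely winning on each lift. Pushing forward by $\Phi$ and using the locality of the absolute game, $\Phi(E)\cap\mathcal C$ is absolutely winning on $\mathcal C$. In dimension one, absolute winning is equivalent to HAW, which is the form stated. Full Hausdorff dimension then follows from the dimension statement of Theorem \ref{T:main-curves}, since local isometries preserve Hausdorff dimension.

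\emph{Main obstacle.} The only delicate point is Step 2. A closed curve on $\TT$ may wind around the torus, so it has no global lift and one cannot simply apply Theorem \ref{T:main-curves} once. I would handle this by covering $\mathcal C$ with finitely or countably many arcs small enough to lift injectively. Then I would use that a set is absolutely winning on a manifold if it is winning on each chart of an open cover, which is the locality property above.
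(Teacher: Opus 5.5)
Your proposal is correct and follows the paper's own route. The paper obtains this statement by transporting Theorem \ref{T:main-curves} through the local diffeomorphism $(x,y)\mapsto p_{x,y}$, combined with the \cite{BFK25} equivalence between \eqref{E:ULC} and fiberwise divergence; your arc-by-arc lifting and your use of the atlas-based definition of HAW on manifolds simply make explicit the locality that the paper leaves implicit.
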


\subsection{Organization of the paper and strategy of the proof} This paper is organized as follows. In Section \ref{S:Schmidt}, we recall the definitions and properties of Schmidt's game and its variants--the hyperplane absolute game and the hyperplane potential game. In Section \ref{S:danger}, we define the dangerous sets to be avoided for the target set \eqref{E:noULC}, and prove the simplex lemmata that control the distribution of dangerous sets at the same scale. Here we shall need a new simplex lemma for multiple weights (see Lemma \ref{L:simplex0}) which has not previously appeared in this form. 

The most technical part of this paper is Section \ref{SS:potential}. Given a ball $B=B(z,\rho)$, when considering those dangerous sets $\Delta$ with $\Delta\cap B\neq\varnothing$ and with certain level of denominators, we need to group them according to their heights. The resulting sub-families of dangerous sets are $\cD^s$ and $\cD^c_j\,(j\geq 0)$ in Propositions \ref{P:potential1} and \ref{P:potential2}. This grouping is crucial to Alice's collections of affine line neighborhoods at general stages.

In Section \ref{S:win}, we play the hyperplane potential game on $\RR^2$ with the target set \eqref{E:noULC}. We divide all turns of the game into stages, and at each stage, we carefully design Alice's strategy of choosing affine line neighborhoods based on the above grouping. Finally, we prove that the target set \eqref{E:noULC} is hyperplane potential winning, or equivalently, it is hyperplane absolute winning.

In Section \ref{S:further}, we deduce Theorem \ref{T:1+epsilon} from the proof of Theorem \ref{T:main}, in which we use the property that any finite intersection of winning sets is again winning. More explicitly, this trick amounts to placing the finitely many winning strategies into the different residue classes, and showing that putting them together gives a new winning strategy. 

Finally, in Section \ref{S:curves}, we prove Theorems \ref{T:main-curves} and \ref{T:lines} by transferring the
arithmetically controlled deletions of affine line neighborhoods from the ambient potential game to the parameter spaces of $C^2$ curves and lines.

\section{Preliminaries on Schmidt's Games}\label{S:Schmidt}

\subsection{Schmidt's $(\alpha,\beta)$-game}\label{alphabeta}
	
	We first recall Schmidt's $(\alpha,\beta)$-game introduced in \cite{Sch66}. It involves two parameters $\alpha,\beta\in(0,1)$  and is played by two players Alice and Bob on a complete metric space $(X,\dist)$ with a target set $S\subset X$. Bob starts the game by choosing a closed ball $B_0=B(z_0,\rho_0)$ in $X$ with center $z_0$ and radius $\rho_0$. After Bob chooses a closed ball $B_n = B(z_n,{\rho}_n) $, Alice chooses  $A_n = B(z_n', {\rho}_n')$ with $${\rho}'_n=\alpha {\rho}_n\quad\text{and}\quad
	\dist(z'_n, z_n) \leq (1-\alpha){\rho}_n,$$
	and then Bob chooses  $B_{n+1} = B(z_{n+1},{\rho}_{n+1}) $ with $${\rho}_{n+1}=\beta {\rho}'_{n}\quad \text{and}\quad  \dist(z_{n+1}, z'_n) \leq (1-\beta){\rho}'_n,$$
	etc. This implies that the balls are nested:
	$$
	B_0 \supset A_0 \supset B_1 \supset\cdots ;$$
	Alice wins the game if the unique point $\bigcap_{n=0}^\infty A_n=\bigcap_{n=0}^\infty B_n$ belongs to $S$. The set $S$ is \textsl{$(\alpha,\beta)$-winning} if Alice has a winning strategy, is \textsl{$\alpha$-winning} if it is $(\alpha,\beta)$-winning for any $\beta\in(0,1)$, and is \textsl{winning} if it is $\alpha$-winning for some $\alpha\in (0,1)$. 
    
    Regarding this game and its winning sets, Schmidt \cite{Sch66} proved that:	
	\begin{itemize}
		\item[$\bullet$]  winning subsets of Riemannian manifolds are \textit{thick}, that is, their intersection with any nonempty open subset has full Hausdorff dimension;
        \item[$\bullet$]  a countable intersection of $\alpha$-winning sets is again $\alpha$-winning;
        \item[$\bullet$]  if $S$ is $\alpha$-winning and $f:X\to X$ is bi-Lipschitz, then $f(S)$ is $\alpha'$-winning, where $\alpha'$ depends on $\alpha$ and the bi-Lipschitz constant of $f$.
	\end{itemize}
	
	Schmidt's game has been a powerful tool for proving 
	thickness of intersections of certain countable families of sets, see e.g.\ \cite{An13, An16,  BBFKW10, BFK11, BFKRW12, Da1, Da2, Da3}. However, for a fixed $\alpha\in (0,1)$, the class of $\alpha$-winning subsets of a Riemannian manifold depends on the choice of the metric, and is not known to be preserved by diffeomorphisms.

	\subsection{Hyperplane absolute game on $\RR^d$}\label{haw}
	
	Inspired by ideas of McMullen \cite{Mc10}, the hyperplane absolute game on the Euclidean space $\RR^d$ was introduced in \cite{BFKRW12}.
	It has the advantage that the family of its winning sets is preserved by $C^1$ diffeomorphisms.
	Let $S \subset \RR^d$ be a target set and let
	$\beta \in \left(0, \frac13 \right)$.
	As before, Bob begins by choosing a closed ball $B_0$ of radius ${\rho}_0$.
	For an affine hyperplane $L\subset\RR^d$ and $r>0$, we denote the $r$-neighborhood of $L$ by
	$$L^{(r)}:=\{z\in\RR^d:\mathrm{dist}(z,L)\leq r\}.$$
	After Bob chooses a closed ball $B_n$ of radius ${\rho}_n$, Alice chooses a hyperplane neighborhood $L_n^{(r_n)}$ with $r_n\le\beta {\rho}_n$, and then Bob chooses a closed ball $B_{n+1}\subset B_n\setminus L_n^{(r_n)}$ of radius ${\rho}_{n+1}\ge\beta {\rho}_n$. 
    
    We say that Alice wins by default if at any time Bob has no legal move, or $\rho_n \not\to 0$ as $n\to +\infty$. Otherwise, Alice wins the game if and only if
	$$\bigcap_{n=0}^{+\infty} B_n\cap S\ne\emptyset.$$ The set $S$ is \textsl{$\beta$-hyperplane absolute winning} (\textsl{$\beta$-HAW} for short) if Alice has a winning strategy, and is \textsl{hyperplane absolute winning} (\textsl{HAW} for short) if it is $\beta$-HAW for any $\beta\in(0,\frac{1}{3})$.

	\begin{lemma}[\cite{BFKRW12}]\label{L:HAW-Rd}
		\begin{itemize}
			\item[(i)] HAW subsets   are winning, and hence thick.
			\item[(ii)] A countable intersection of HAW subsets   is again HAW.
			\item[(iii)]   The image of an HAW set under a $C^1$ diffeomorphism $\RR^d \to \RR^d$ is HAW.
		\end{itemize}
	\end{lemma}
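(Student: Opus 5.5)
This lemma is quoted from \cite{BFKRW12}, so we only sketch how each part is obtained.

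For (i), we show that a $\beta$-HAW set $S$ is $(\alpha,\beta')$-winning in Schmidt's sense for a suitable fixed $\alpha$, for instance $\alpha=\beta/ (\text{const})$ with $\beta$ small, and for every $\beta'\in(0,1)$. Alice simulates a hyperplane absolute game using Bob's Schmidt balls. Given Bob's Schmidt ball $B_n$ of radius $\rho_n$, we feed it to the absolute strategy as Bob's ball and receive a neighborhood $L^{(r)}$ with $r\le\beta\rho_n$. Alice then picks a ball of radius $\alpha\rho_n$ inside $B_n$ that avoids $L^{(r)}$. This is possible once $\alpha\le (1-\beta)/2$ or so: one half of $B_n$ minus the slab contains a ball of radius about $\frac{1-\beta}{2}\rho_n$.

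Bob's Schmidt moves shrink radii by the factor $\alpha\beta'$, which may be far smaller than $\beta$. To handle this, we let Alice's simulated opponent make his moves only at the times when the Schmidt radius has dropped below $\beta$ times the last simulated radius. At the intermediate times we allow ``dummy'' hyperplane moves. These are legal since the absolute game only requires $\rho_{n+1}\ge\beta\rho_n$: Bob may shrink slowly and then jump.

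The main care needed is that a Schmidt radius sequence jumps in factors $\alpha\beta'$. We therefore pass to a subsequence whose consecutive ratios lie in $[\beta\cdot\alpha\beta',\beta]$. This forces us to use the absolute strategy with parameter $\beta''=\alpha\beta\beta'$, which is allowed because the set is HAW, that is, winning for all parameters. The nested intersection point lies in $S$, so $S$ is winning, and thickness follows from Schmidt's theorem.

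For (ii), we interleave strategies. Given HAW sets $S_k$, Alice plays the strategy for $S_k$ at the turns $n\equiv 2^{k-1}\pmod{2^k}$. From the perspective of the $k$-th strategy, the other turns look like legal Bob moves after several dummy hyperplane choices. This uses the fact that removing extra slabs and shrinking by a factor $\beta^{2^k}$ between its own turns is realizable as a legal Bob play for parameter $\beta^{2^k}$. The point is again that HAW means winning for every $\beta$, so each subgame is won and the limit point lies in every $S_k$.

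For (iii), let $\Phi$ be a $C^1$ diffeomorphism. On a small ball, $\Phi$ is close to its affine derivative $D\Phi(z)$, with error $o(\rho)$ on balls of radius $\rho$ around $z$. Alice, playing for $\Phi(S)$, pulls Bob's ball back by $\Phi^{-1}$. The preimage contains and is contained in ellipsoid-like balls of comparable radii, with constants depending on local Lipschitz bounds. She consults the strategy for $S$ and pushes the answered hyperplane neighborhood forward. The image of a thin slab under $\Phi$ is contained in a slab around the tangent hyperplane $D\Phi(L)$ of width $C r+o(\rho)$.

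The main obstacle is this curvature error: the image of a hyperplane is only approximately flat. We handle it by choosing the strategy for $S$ with a much smaller parameter $\beta'$, so that $C\beta'\rho+o(\rho)\le\beta\rho$ once the radii are small enough. Alice wins by default if the radii do not tend to zero, and we wait finitely many dummy turns until the radii are small. Again, we use that HAW means winning for all small parameters.
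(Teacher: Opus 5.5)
The paper gives no proof of this lemma; it is quoted from \cite{BFKRW12}. Your sketches of (i) and (ii) are the standard arguments and are fine. In (i), though, the subsequence and dummy-move discussion is unnecessary. Its justification (``Bob may shrink slowly and then jump'') also points the wrong way: Schmidt balls shrink too fast for the absolute game, not too slowly. What actually works is your last step. Run the absolute strategy with a parameter $\beta''\le\min\{\alpha\beta',1-2\alpha\}$, and every Schmidt round is then already a legal absolute round.

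The genuine gap is in (iii). You never say which round ball is handed to the strategy for $S$ as Bob's move, and this is where the real work lies. The curvature error you call the main obstacle is the easy part.

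\emph{Why the naive simulation is illegal.} The strategy for $S$ only accepts closed balls with $B'_{k+1}\subset B'_k\setminus L_k^{(r_k)}$ and a controlled radius ratio. But $\Phi^{-1}(B_n)$ is roughly an ellipsoid, whose eccentricity can be as large as $\sup\|D\Phi\|\cdot\sup\|D\Phi^{-1}\|$. Already for a linear map $\Phi=\diag(1,\lambda)$ with $\lambda$ large, both natural choices fail.
Balls inscribed in consecutive pullbacks need not be nested.
The circumscribed balls $B'_n=B(\Phi^{-1}(z_n),K\rho_n)$, with $K$ a Lipschitz constant of $\Phi^{-1}$, are nested but can meet $L_n^{(r_n)}$. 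The condition $B_{n+1}\cap\Phi(L_n^{(r_n)})=\varnothing$ only keeps $\Phi^{-1}(z_{n+1})$ at distance about $\rho_{n+1}/\|D\Phi\|$ from $L_n^{(r_n)}$, not $K\rho_{n+1}$.

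\emph{The repair.} The usual fix is the sparsification you used in (i), applied here together with an enlarged deletion.
Feed the auxiliary game only at times $n_k$, where $n_{k+1}$ is the first $n>n_k$ with $\rho_n\le\epsilon\rho_{n_k}$; then $\rho_{n_{k+1}}\ge\beta\epsilon\rho_{n_k}$.
Use the strategy for $S$ with parameter $\beta\epsilon$.
At turn $n_k$, let Alice delete a hyperplane neighborhood containing $\Phi\bigl(L_k^{(r_k+K\epsilon\rho_{n_k})}\cap B'_k\bigr)$, and nothing at the other turns.

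Then $B'_{k+1}\subset B'_k$ and $B'_{k+1}$ misses $L_k^{(r_k)}$. With $C=\sup\|D\Phi\|$ on a compact set containing all the $B'_k$, the deleted neighborhood has half-width at most $C\,(r_k+K\epsilon\rho_{n_k})+o(\rho_{n_k})$. This is at most $\beta\rho_{n_k}$ once $\epsilon$ is small compared with $\beta/(CK)$ and the radii are small. Without this enlargement-plus-waiting step, the simulated play is not a legal play of the absolute game, so the strategy for $S$ cannot be invoked.
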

	
	\subsection{HAW subsets of a manifold}\label{hawmfld}
	
	The notion of HAW sets has been extended to subsets of $C^1$ manifolds in \cite{KW15}.
	This is done in two steps. First one defines the hyperplane absolute game on an open subset $U\subset \RR^d$. It is defined just as the hyperplane absolute game on
	$\RR^d$, except for requiring that Bob's first move $B_0$ be
	contained in $U$. If Alice has a winning strategy, we say that $S$ is {\sl HAW on $U$}.
	Now let $M$ be a $d$-dimensional $C^1$ manifold, and let $\{(U_\alpha, \varphi_\alpha)\}$ be a $C^1$ atlas, that is, $\{U_\alpha\}$ is an open cover of $M$, and each $\varphi_\alpha$ is a $C^1$ diffeomorphism from $U_\alpha$ onto the open subset $\varphi_\alpha(U_\alpha)$ of $\RR^d$. A subset $S\subset M$ is said to be \textsl{HAW on $M$} if for each $\alpha$, $\varphi_\alpha(S\cap U_\alpha)$ is HAW on $\varphi_\alpha(U_\alpha)$. This definition is independent of the choice of the atlas (see \cite[Corollary 3.4]{KW15}).
	
	\begin{lemma}\label{L:HAW-mnfd}
		\begin{itemize}
			\item[(i)]  HAW subsets of a $C^1$ manifold are thick.
			\item[(ii)]  A countable intersection of HAW subsets of a $C^1$ manifold is again HAW.
			\item[(iii)] Let $f:M\to N$ be a diffeomorphism between $C^1$ manifolds, and let $S\subset M$ be an HAW subset of $M$. Then $f(S)$ is an HAW subset of $N$.
            \item[(iv)] The complement of any set with zero Hausdorff dimension is HAW.
		\end{itemize}
	\end{lemma}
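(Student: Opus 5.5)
The four assertions of Lemma \ref{L:HAW-mnfd} are all reductions, via charts, to the Euclidean statements of Lemma \ref{L:HAW-Rd}, plus the atlas-independence result \cite[Corollary 3.4]{KW15}. For an open set $U\subset\RR^d$, I first note that the three properties in Lemma \ref{L:HAW-Rd} also hold for HAW on $U$. Indeed, \cite{KW15} shows that a set is HAW on $U$ exactly when it is HAW on every ball contained in $U$; equivalently, one may localize the game to balls whose closures lie in $U$. With this in hand, (i) is immediate. Let $W\subset M$ be nonempty and open, and choose a chart $(U_\alpha,\varphi_\alpha)$ meeting $W$. Then $\varphi_\alpha(S\cap U_\alpha)$ is HAW on $\varphi_\alpha(U_\alpha)$, hence winning and thick on the open set $\varphi_\alpha(U_\alpha\cap W)$. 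Since $\varphi_\alpha^{-1}$ is $C^1$, it is locally bi-Lipschitz, and Hausdorff dimension is preserved. So $\dimH (S\cap W)=d$.

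For (ii), fix a chart and use $\varphi_\alpha(\bigcap_i S_i\cap U_\alpha)=\bigcap_i\varphi_\alpha(S_i\cap U_\alpha)$. The Euclidean countable-intersection property (Lemma \ref{L:HAW-Rd}(ii), in its open-set version) then gives the claim chart by chart.

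For (iii), let $\{(V_\gamma,\psi_\gamma)\}$ be an atlas of $N$. Then $\{(f^{-1}(V_\gamma),\psi_\gamma\circ f)\}$ is a $C^1$ atlas of $M$, and
\[
(\psi_\gamma\circ f)(S\cap f^{-1}(V_\gamma))=\psi_\gamma(f(S)\cap V_\gamma).
\]
Because HAW on $M$ does not depend on the atlas \cite[Corollary 3.4]{KW15}, the left-hand sides are HAW. Hence $f(S)$ is HAW on $N$ with respect to $\{(V_\gamma,\psi_\gamma)\}$.

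The substantive point, and the one I expect to be the main obstacle, is (iv). Charts preserve zero Hausdorff dimension, so it suffices to show the following: if $Z\subset\RR^d$ has $\dimH Z=0$, then $\RR^d\setminus Z$ is $\beta$-HAW on $U$ for every $\beta\in(0,\frac13)$. Alice's strategy already uses only neighborhoods of hyperplanes through points, i.e.\ she plays like in McMullen's absolute game. Fix $s>0$ small, to be chosen depending on $\beta$ and $d$. Cover $Z$ by balls $D_k$ of radii $r_k$ with $\sum_k r_k^s<\varepsilon$, where $\varepsilon$ is arbitrarily small. We may assume Bob's radii tend to $0$, and we group the turns into stages where $\rho_n$ decreases by a factor of $\beta$. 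When Bob's ball $B_n$ has radius $\rho_n$, Alice handles the covering balls with $r_k\in(\beta\rho_n,\rho_n]$ that meet $B_n$. A single hyperplane neighborhood of width $\beta\rho_n$ can delete one such $D_k$ if $r_k\le\beta\rho_n/2$; this mismatch is absorbed by shifting the scale by a fixed number of stages. If there are too many such balls, Alice cannot delete them all, so she uses a potential-game argument, namely the hyperplane potential game of \cite{FSU}, which this paper also recalls. The condition for winning that game is exactly a summability condition $\sum r_k^{s}<\infty$ with a small exponent $s$, and this is guaranteed by $\dimH Z=0$. Balls $D_k$ larger than $\rho_0$ are handled by the fact that $\varepsilon$ is small: after discarding finitely many, every remaining ball has radius less than $\beta\rho_0$, and Bob's first ball can be assumed to avoid the finitely many large ones by a preliminary move. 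So (iv) follows from the potential-game characterization of HAW, and Alice wins by deleting the single point $D_k$, or the whole small ball, at the appropriate scale.
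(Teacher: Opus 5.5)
Your chart reductions for (i)--(iii) are fine. They are essentially the content of \cite[Proposition 3.5]{KW15}, which is all the paper invokes for those parts. For (iv) you also have the right key idea, and it is the route behind the paper's citation of \cite[Proposition 4.6]{BHNS25} together with \cite[Theorem C.8]{FSU18} (Lemma \ref{L:HAWHPW}): cover $Z$ by balls of small $\gamma$-content and delete them in the hyperplane potential game.

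However, the implementation you describe has a genuine defect. In the $(\beta,\gamma)$-game the exponent $\gamma$ is a parameter of the game, and HPW requires winning for every $\gamma>0$. So you cannot choose $s$ ``small depending on $\beta$ and $d$''; you must take $s=\gamma$, and $\dimH Z=0$ is used exactly to get $\mathcal{H}^{\gamma}(Z)=0$ for all $\gamma>0$. More seriously, Alice's constraint is not a global summability condition $\sum_k r_k^s<\infty$ but the per-turn budget $\sum_i r_{n,i}^{\gamma}\le(\beta\rho_n)^{\gamma}$. In your scale-by-scale scheme, the potential spent at turn $n$ on cover balls of radius comparable to $\beta^{K}\rho_n$ meeting $B_n$ is bounded only by the global $\varepsilon$, which does not shrink with $\rho_n$. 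Nothing prevents many such balls from clustering around the eventual outcome, so the budget can be violated at late turns. Also, Alice has no ``preliminary move'' that constrains Bob's first ball.

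None of this machinery is needed. After Bob chooses $B_0=B(z_0,\rho_0)$, use $\mathcal{H}^{\gamma}(Z)=0$ to cover $Z$ by balls $B(w_k,r_k)$ with $\sum_k r_k^{\gamma}\le(\beta\rho_0)^{\gamma}$. This automatically makes every $r_k\le\beta\rho_0$, so there are no ``large'' balls to discard. At turn $0$, let Alice delete $L_k^{(r_k)}$ for an arbitrary hyperplane $L_k$ through $w_k$. This is one legal move, and each $L_k^{(r_k)}\supseteq B(w_k,r_k)$, so any outcome avoiding the deleted sets lies outside $Z$. Hence $\RR^d\setminus Z$ is $(\beta,\gamma)$-HPW for all $\beta,\gamma$, and Lemma \ref{L:HAWHPW} gives HAW. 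The manifold version follows for two reasons: $C^1$ charts are locally Lipschitz, so they preserve zero Hausdorff dimension; and a set that is HAW on $\RR^d$ remains HAW on any open $U$ by restricting Bob's first ball.
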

	
	\begin{proof}
		(i)--(iii) appeared as \cite[Proposition 3.5]{KW15} and are clear from Lemma \ref{L:HAW-Rd}. 
        (iv) follows from \cite[Proposition 4.6]{BHNS25}, together with the equivalence between the potential and absolute games \cite[Theorem C.8]{FSU18}.
	\end{proof}

    The following statement gives a sufficient condition under which HAW property is transitive.

    
    

\begin{lemma}\label{L:HAW-open}
Let $M$ be a $C^1$ manifold and let $M'\subset M$ be closed. Assume that
$M\setminus M'$ is HAW on $M$. For $S \subseteq M$, $S\cap \left( M\setminus M' \right)$ is HAW on $M\setminus M'$ if and only if $S$ is HAW on $M$.
\end{lemma}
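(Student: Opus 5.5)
We prove the two implications separately; both reduce to a statement about a single chart. Fix a $C^1$ atlas $\{(U_\alpha,\varphi_\alpha)\}$ of $M$. Since $M'$ is closed, $M\setminus M'$ is open. The restricted charts $\{(U_\alpha\setminus M',\varphi_\alpha|_{U_\alpha\setminus M'})\}$ therefore form an atlas of $M\setminus M'$, and by \cite[Corollary 3.4]{KW15} we may use it to test the HAW property on $M\setminus M'$. For a fixed $\alpha$ write
$$U=\varphi_\alpha(U_\alpha),\qquad V=\varphi_\alpha(U_\alpha\setminus M'),\qquad W=\varphi_\alpha\bigl(U_\alpha\cap(M\setminus M')\bigr)=V.$$
Here $V\subset U$ is open, and the hypothesis says exactly that $V$ is HAW on $U$. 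Two trivial facts will be used: an HAW set on $U$ is HAW on every open subset of $U$, and any superset of an HAW set is HAW.

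\emph{($\Leftarrow$)} Suppose $S$ is HAW on $M$. By Lemma \ref{L:HAW-mnfd}(ii), $S\cap(M\setminus M')$ is HAW on $M$. So in each chart, $\varphi_\alpha(S\cap(U_\alpha\setminus M'))$ is HAW on $U$. Now restrict the game to the open subset $V\subset U$. Any legal initial ball $B_0\subset V$ is also a legal initial ball for the game on $U$, so Alice's winning strategy on $U$ still wins. Hence $S\cap(M\setminus M')$ is HAW on $M\setminus M'$.

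\emph{($\Rightarrow$)} Suppose $T:=\varphi_\alpha(S\cap(U_\alpha\setminus M'))$ is HAW on $V$. Since $T\subseteq\varphi_\alpha(S\cap U_\alpha)$ and supersets of HAW sets are HAW, it suffices to show that $T$ is HAW on $U$. Fix $\beta\in(0,\tfrac13)$. Alice uses a switching strategy.

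\begin{itemize}
\item[(a)] At the start she follows her $\beta$-winning strategy for the target set $V$ in the game on $U$.
\item[(b)] At each turn $n$, before moving, she checks whether Bob's ball satisfies $B_n\subset V$. At the first such $n$, she discards the history and treats $B_n$ as Bob's initial ball in the game on $V$. This is legal, since $B_n\subset V$. From then on she follows her $\beta$-winning strategy for $T$ on $V$.
\end{itemize}

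The only point requiring care, and it is mild, is that the switch must eventually occur unless Alice wins by default. Suppose the switch never occurs and Alice does not win by default. Then $\rho_n\to0$, and the outcome $z=\bigcap_n B_n$ lies in $V$, because Alice's play in phase (a) wins for the target $V$. Since $V$ is open, $\mathrm{dist}(z,\RR^d\setminus V)>0$. As $\rho_n\to0$, eventually $B_n\subset V$, which contradicts the assumption that no switch occurs.

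Hence either Alice wins by default, or the switch happens at some finite stage. In the latter case the remaining play is a full $\beta$-hyperplane absolute game on $V$ that Alice wins, so the outcome point lies in $T$. Since $\beta$ and $\alpha$ were arbitrary, $T$ is HAW on $U$ for every chart, and therefore $S$ is HAW on $M$.
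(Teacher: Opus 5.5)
Your proof is correct. The core mechanism is the same as the paper's. Alice switches to the strategy on the open set at the first turn when Bob's ball lies inside it, and openness together with $\rho_n\to0$ forces this switch whenever the outcome lies in the open set.

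The decomposition of the converse differs slightly.
\begin{itemize}
\item The paper first shows that $S\cup M'$ is HAW on $M$. Alice plays arbitrarily before the switch, and if no switch ever happens, the outcome lies in the closed set $M'$. It then obtains $S\cap(M\setminus M')=(S\cup M')\cap(M\setminus M')$ as HAW from the intersection property, Lemma \ref{L:HAW-mnfd}(ii).
\item You fold the intersection into the strategy itself by running the winning strategy for $V$ before the switch. As a result, apart from default wins, the no-switch case cannot occur at all, rather than merely sending the outcome into $M'$.
\end{itemize}
Your route is self-contained and does not invoke the countable-intersection property of HAW sets, at the cost of working explicitly chart by chart. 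The paper's route is shorter because it reuses that property.

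In the easy direction, your appeal to Lemma \ref{L:HAW-mnfd}(ii) is unnecessary. Bob's initial ball already lies in $V$, so the outcome of Alice's strategy for $\varphi_\alpha(S\cap U_\alpha)$ automatically lies in $\varphi_\alpha(S\cap U_\alpha)\cap V$.
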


\begin{proof}
Suppose $S$ is HAW on $M$; in particular, Alice has a winning strategy for every Bob's play starting from a ball in $M \setminus M'$, so $S\cap \left( M\setminus M' \right)$ is HAW on $M \setminus M'$.

Conversely, suppose that $S\cap \left( M\setminus M' \right)$ is HAW on $M\setminus M'$.  Since $M'$ is closed, the set $S \cup M'$ is HAW on $M$. Indeed: either one of Bob's balls is contained in $M \setminus M'$, and then Alice applies the strategy for $S\cap \left( M\setminus M' \right)$, or it never happens, and Alice wins since the outcome is in $M'$. Finally, the set $S \cap (M \setminus M')  = \left( S \cup M' \right) \cap (M \setminus M')$, and thus also $S$ as its superset, is HAW on $M$ as an intersection of two HAW sets. 
\end{proof}

	\subsection{Hyperplane potential game}\label{SS:HPW}
	
	Finally, we recall the hyperplane potential game introduced in \cite{FSU18}. It is played on $\RR^d$ and has the same winning sets as the hyperplane absolute game does. This allows one to prove the HAW property of a set $S\subset\RR^d$ by showing that it is winning for the hyperplane potential game (see \cite{AGGL19, AGK15, GW18, GY19, NS14} for example).
	
	Let $S\subset\RR^d$ be a target set, and let $\beta\in(0,1)$, $\gamma>0$. The \textsl{$(\beta,\gamma)$-hyperplane potential game} is defined as follows: Bob begins by choosing a closed ball $B_0\subset\RR^d$. After Bob chooses a closed ball $B_n$ of radius ${\rho}_n$, Alice chooses an at most countable indexed family of hyperplane neighborhoods $\{L_{n,i}^{(r_{n,i})} : i\in I_n\}$ such that
	\begin{equation}\label{E:potential_game_ineq}
    \sum_{i\in I_n} r_{n,i}^\gamma\le(\beta {\rho}_n)^\gamma,
    \end{equation}
	and then Bob chooses a closed ball $B_{n+1}\subset B_n$ of radius $\rho_{n+1}\ge\beta \rho_n$. 
    
    We say that Alice wins by default if $$
    \rho_n\not\to 0\text{ as }n\to +\infty,\quad\text{or}\quad \bigcap_{n=0}^\infty B_n\cap\bigcup_{n=0}^\infty\bigcup_{i\in{I_n}} L_{n,i}^{(r_{n,i})}\ne\emptyset.
    $$
    Otherwise, Alice wins the game if and only if
	$$\bigcap_{n=0}^\infty B_n\cap S\ne\emptyset.$$
	The set $S$ is \textsl{$(\beta,\gamma)$-hyperplane potential winning} (\textsl{$(\beta,\gamma)$-HPW} for short) if Alice has a winning
	strategy, and is \textsl{hyperplane potential winning} (\textsl{HPW} for short) if it is $(\beta,\gamma)$-HPW for any $\beta\in(0,1)$ and $\gamma>0$. The following lemma is a special case of \cite[Theorem C.8]{FSU18}.
	
	\begin{lemma}\label{L:HAWHPW}
		A subset $S\subset\RR^d$ is HPW if and only if it is HAW.
	\end{lemma}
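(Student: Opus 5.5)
The plan is to reproduce, in the special case of hyperplanes in $\RR^d$, the two simulation arguments behind \cite[Theorem C.8]{FSU18}. Each direction converts a winning strategy for one game into a winning strategy for the other. I have fully worked out neither direction. The bookkeeping is the real content of the lemma, and the sketch below only fixes the approach and names the invariants.

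\emph{HAW $\Rightarrow$ HPW.} Fix $\beta\in(0,1)$ and $\gamma>0$, and let Alice have a winning strategy in the hyperplane absolute game for a much smaller parameter $\beta'\ll\beta$ (allowed, since HAW means $\beta'$-HAW for every $\beta'$). In the potential game Alice uses one hyperplane neighborhood per turn, which trivially satisfies \eqref{E:potential_game_ineq}, and she runs a shadow absolute game. When the shadow strategy, facing a shadow ball of radius $\rho$, deletes $L^{(s)}$ with $s\le\beta'\rho$, Alice deletes the thicker set $L^{(2s)}$ in the potential game. She then passes her next few turns, using an empty family, until Bob's potential ball has radius of order $s$.

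If the eventual outcome lies in $L^{(2s)}$, Alice wins by default. Otherwise, since the outcome lies outside the closed set $L^{(2s)}$ and the radii tend to $0$, some later potential ball $B_m$ is disjoint from $L^{(2s)}$. Its center is then at distance greater than $2s$ from $L$, so the concentric ball of radius $s$ avoids $L^{(s)}$. Alice declares a ball of this kind, radius at least $\beta'\rho$, to be the next shadow Bob move.

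The delicate point is to keep the shadow balls nested and contained in the previous shadow ball. The slack between $\beta'$ and $\beta$ is used to arrange this: one shrinks the declared ball to radius $\beta'\rho$ around a point of $B_m$, which is deep inside the previous shadow ball because $B_m$ lies in it. Finally, the outcome of the potential game is the outcome of the shadow game, and so it lies in $S$.

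\emph{HPW $\Rightarrow$ HAW.} This is the harder direction. Fix $\beta<1/3$ for the absolute game. Alice plays a shadow $(\beta_0,\gamma)$-potential game, with $\beta_0$ chosen small in terms of $\beta$ and with $\gamma$ small. She must dispose of the countably many neighborhoods $L_{n,i}^{(r_{n,i})}$ produced by the shadow strategy, while deleting only one neighborhood per turn in the absolute game.

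Alice proceeds by scale.
\begin{enumerate}
\item A neighborhood with $r_{n,i}$ comparable to the current radius is deleted immediately, and the absolute Bob is forced to avoid it.
\item A much thinner neighborhood is postponed until the absolute ball has shrunk to radius about $r_{n,i}/\beta$, and then deleted.
\end{enumerate}
The key counting step uses the potential inequality \eqref{E:potential_game_ineq}. Summed over the geometric scales, it bounds the number of pending neighborhoods of thickness about $\beta^{k}\rho$ that meet a ball of radius about $\beta^{k-1}\rho$ by roughly $\beta_0^{\gamma}\beta^{-k\gamma}$ summed over earlier stages. For $\beta_0$ small enough, this is at most a bounded number $N$ per scale.

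Alice then spends $N$ consecutive turns per scale deleting these neighborhoods. The absolute Bob's radius shrinks by at most $\beta^N$ over these turns, and this is absorbed by declaring only every $N$-th absolute ball to be a move of the shadow potential Bob. Since $\beta^N\ge\beta_0$ holds after adjusting $\beta_0$, these declared balls are legal potential moves.

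Neighborhoods that are never reached were thinner than every scale, and their intersection with the outcome is handled by a limiting argument: the outcome avoids all deleted sets, so the shadow potential game is not lost by default, and the outcome lies in $S$. I expect this counting and scheduling step to be the main obstacle, specifically making the per-scale bound uniform. If it resists, I would cite \cite[Theorem C.8]{FSU18} directly, as the paper does.
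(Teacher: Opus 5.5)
The paper gives no argument here: the lemma is quoted as a special case of \cite[Theorem C.8]{FSU18}, so your fallback citation is exactly the paper's route. Your own reconstruction, however, has a genuine gap in the direction HPW $\Rightarrow$ HAW: the uniform per-scale bound $N$ does not exist. The inequality \eqref{E:potential_game_ineq} controls $\sum_i r_{n,i}^\gamma$ one turn at a time, but says nothing about where the neighborhoods lie.

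Already in $\RR^2$, at a shadow turn of radius $\rho_0$, the strategy may output the $t$-neighborhoods of $K$ distinct lines through one point $x_0$, with $K\asymp(\beta_0\rho_0/t)^\gamma$. Nothing in the definition of HPW excludes this. Under your rule all of them are postponed. So if Bob's balls keep containing $x_0$, all $K$ are still pending and meet his ball when his radius is about $t/\beta$. Since $t$ is arbitrary, no $N$ works, whatever $\beta_0$ and $\gamma$ are. Your own estimate says as much: $\sum_i(\beta_0\rho_i/t)^\gamma$ over earlier stages is dominated by the oldest, largest stage and grows without bound.

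Starting earlier cannot rescue a scheme that spends one deletion per shadow neighborhood, since only about $\log(\rho_0/t)/\log(1/\beta)$ turns elapse before that scale. The closing ``limiting argument'' is also unavailable: a shadow neighborhood that is never deleted and keeps meeting Bob's balls contains the outcome.

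The missing idea is that one absolute move must neutralize many pending neighborhoods at once, using the geometry of hyperplanes. In the example, a single slab of half-width $\beta\rho$ through $x_0$, played while Bob's radius $\rho$ is still large, keeps all later balls at distance at least $\beta\rho$ from $x_0$. After that, a ball of radius $r$ meets only the lines whose direction lies within angle about $r/(\beta\rho)$ of a fixed direction. One way to make this systematic is to track the pending potential normalized by Bob's current radius, with $\gamma$ small so that a definite proportional gain per turn beats the factor $\beta^{-\gamma}$ by which it can grow when Bob shrinks. In any case, this is exactly the part of \cite[Theorem C.8]{FSU18} that your sketch does not reproduce.

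In the direction HAW $\Rightarrow$ HPW, the step you flag is also justified incorrectly: $B_m=B(z_m,\rho_m)$ lying in the previous shadow ball does not place it deep inside. In your scheme $B_m$ has radius of order $s$ or less, while the next shadow ball must have radius at least $\beta'\rho$, lie in the previous one, and miss $L^{(s)}$. When $s\ll\beta'\rho$, three things go wrong:
\begin{enumerate}
\item a ball of radius $\beta'\rho$ around a point of $B_m$ can meet $L^{(s)}$;
\item pushing it away from $L$ can make it leave the previous shadow ball, if $L$ cuts off a thin cap containing $B_m$;
\item the shadow radius then exceeds $\rho_m$ by an unbounded factor, so the next shadow deletion may exceed the budget $\beta\rho_m$.
\end{enumerate}
This direction is repairable. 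Let the current shadow ball be $B(\tilde z,\tilde\rho)$ and delete $L^{(4\beta'\tilde\rho)}$, which is legal once $8\beta'\le\beta^2$. Stop at the first $m$ with $\rho_m\le\beta'\tilde\rho/2$; at that point either $B_m\subset L^{(4\beta'\tilde\rho)}$ (a default win) or $\mathrm{dist}(B_m,L)>3\beta'\tilde\rho$. Maintain the invariant that the potential balls lie inside $B(\tilde z,(1-2\beta')\tilde\rho)$, by centering the new shadow ball of radius $\beta'\tilde\rho$ at $z_m+\bigl((1-2\beta')\beta'\tilde\rho-\rho_m\bigr)\nu$, where $\nu$ is the unit normal to $L$ pointing toward $B_m$.
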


\section{The dangerous sets and simplex lemmata}\label{S:danger}

In this section, we define the dangerous sets to be avoided for the target set \eqref{E:noULC}. Then we prove the simplex lemmata, which state that: locally, all these dangerous sets of the same type and at the same scale must be contained in at most two common line neighborhoods. 

\subsection{Definitions of dangerous sets} We first introduce some notations. Write $$
\cP:=\left\{P=\left(\frac{p}{q},\frac{r}{q}\right):(p,r,q)\in\ZZ^2\times \NN,\;\gcd(p,r,q)=1\right\}.
$$
For $P\in\cP$, $s\in [0,1]$, and $c>0$, we write \begin{equation*}
    \begin{aligned}
        \Delta_c(P;s)&:=\left\{(x,y)\in\RR^2:
        \left|x-\frac{p}{q}\right|<\frac{c}{q^{1+s}},\;
        \left|y-\frac{r}{q}\right|<\frac{c}{q^{2-s}}
        \right\};\\
        \Delta_c^1\left(P\right)&:=\left\{(x,y)\in\RR^2:
        \left|x-\frac{p}{q}\right|<\frac{c}{q^2}
        \right\};\\
        \Delta_c^2\left(P\right)&:=\left\{(x,y)\in\RR^2:
        \left|y-\frac{r}{q}\right|<\frac{c}{q^2}
        \right\}.
    \end{aligned}
\end{equation*}
Throughout, a dangerous set is regarded as a labeled object together with its defining parameters. Thus two dangerous sets having the same underlying subset of $\RR^2$ but different defining parameters are regarded as distinct.

We begin with a simple observation on the set of multiplicatively well approximable vectors with respect to a rational point $P\in\cP$.

\begin{lemma}\label{L:WcP}
    Let $P=(\frac{p}{q},\frac{r}{q})\in\cP$ and $c>0$. Then the set $$
    W_c(P):=\left\{
    (x,y)\in\RR^2:q|qx-p||qy-r|<c^2
    \right\}
    $$
    is contained in the union $$
    \bigcup_{s\in [0,1]}\Delta_c(P;s)\cup \Delta^1_c(P)\cup \Delta^2_c(P).
    $$
\end{lemma}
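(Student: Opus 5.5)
Write $u := |qx-p|$ and $v := |qy-r|$. Membership in $W_c(P)$ means $quv < c^2$. The membership conditions in the target union translate into the same variables:
\begin{itemize}
\item $(x,y) \in \Delta^1_c(P)$ iff $u < c/q$;
\item $(x,y) \in \Delta^2_c(P)$ iff $v < c/q$;
\item $(x,y) \in \Delta_c(P;s)$ iff $u < c\,q^{-s}$ and $v < c\,q^{s-1}$.
\end{itemize}
So the task is: given $uv < c^2/q$, produce either one of the two degenerate cases or an exponent $s\in[0,1]$ placing $(u,v)$ inside the corresponding rectangle.

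\emph{Degenerate cases.} If $u < c/q$ or $v < c/q$, the point lies in $\Delta^1_c(P)$ or $\Delta^2_c(P)$ and we are done. We may therefore assume $u \ge c/q$ and $v \ge c/q$. Since $uv < c^2/q$, this also forces $u < c$ and $v < c$. (If $u=0$ or $v=0$ we are already in a degenerate case.)

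\emph{Choosing $s$.} We need $u < c q^{-s}$ and $v < c q^{s-1}$. Equivalently, $s$ must satisfy
\[
\log_q(u/c) \;<\; -s \qquad\text{and}\qquad s-1 \;>\; \log_q(v/c),
\]
that is,
\[
1+\log_q(v/c) \;<\; s \;<\; -\log_q(u/c).
\]
This open interval is nonempty exactly when $\log_q(u/c)+\log_q(v/c) < -1$, i.e. when $uv < c^2/q$, which is our hypothesis. (Note $q\ge 2$ is needed for logarithms base $q$; for $q=1$ the condition $uv<c^2$ gives $u<c$ or $v<c$, which is the degenerate case since $c/q=c$.)

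\emph{Intersecting with $[0,1]$.} We still need the interval to meet $[0,1]$.
\begin{itemize}
\item The assumption $u \ge c/q$ gives $-\log_q(u/c) \le 1$.
\item The assumption $u < c$ gives $-\log_q(u/c) > 0$.
\item Symmetrically, $v \ge c/q$ gives $1+\log_q(v/c) \ge 0$, and $v<c$ gives $1+\log_q(v/c) < 1$.
\end{itemize}
Hence the interval is a nonempty open interval with left endpoint in $[0,1)$ and right endpoint in $(0,1]$. Therefore it contains some $s\in(0,1)\subset[0,1]$, and $(x,y)\in\Delta_c(P;s)$.

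There is no real obstacle here; the only care needed is the boundary bookkeeping. The strict inequalities in the definitions of $\Delta$ must be matched against the non-strict ones from the case split, which is why the degenerate cases are defined with $<$ and the complementary case uses $\ge$.
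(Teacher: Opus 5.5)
Your proof is correct and follows essentially the same route as the paper: you send the degenerate cases into $\Delta^1_c(P)$ and $\Delta^2_c(P)$, then choose $s$ from the logarithmic interval whose nonemptiness is exactly the hypothesis $q|qx-p||qy-r|<c^2$. The only difference is in how the cases are split. You use the threshold $c/q$, so the endpoints of your interval automatically lie in $[0,1]$ and $q=1$ is entirely degenerate. The paper instead splits at $|qx-p|=0$ and $|qx-p|\geq c$ (and symmetrically in $y$), treats $q=1$ separately, and then intersects the interval with $[0,1]$ using that both logarithmic terms are positive.
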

\begin{proof}
Let $(x,y)\in\RR^2$ satisfy the inequality \begin{equation}\label{E:WcP}
q|qx-p||qy-r|<c^2.
\end{equation} 
If $|qx-p|=0$, then it is clear that $(x,y)\in\Delta^1_c(P)$. If $|qx-p|\geq c$, then it follows from \eqref{E:WcP} that $q|qy-r|<c$, that is, $(x,y)\in\Delta_c^2(P)$. So we may assume that $0<|qx-p|<c$. By symmetry, we also assume that $0<|qy-r|<c$.

When $q=1$, it follows that for any $s\in [0,1]$ one has $$
0<q^s|qx-p|<c\quad \text{and}\quad 0<q^{1-s}|qy-r|<c,
$$
that is, $(x,y)\in \Delta_c(P;s)$. When $q\geq 2$, we see from \eqref{E:WcP} that $$
1<\frac{\log(\frac{c}{|qx-p|})}{\log{q}}+\frac{\log(\frac{c}{|qy-r|})}{\log{q}}.
$$
Since the two terms on the right-hand side of the above inequality are positive, we may choose some $s\in [0,1]$ such that $$
s<\frac{\log(\frac{c}{|qx-p|})}{\log{q}}\quad \text{and}\quad 1-s<\frac{\log(\frac{c}{|qy-r|})}{\log{q}}.
$$
This simply means that $(x,y)\in \Delta_c(P;s)$. The proof is complete.
\end{proof}

Using Lemma \ref{L:WcP}, we may find a subset of our target set \eqref{E:noULC} that is easier to analyze. For $P=\left(\frac{p}{q},\frac{r}{q}\right)\in\cP$, $Q\geq q$, and $\eta>0$, if we set \begin{equation}\label{E:choosec}
    c=c(P,Q,\eta)=\eta\sqrt{\frac{q}{Q}},
\end{equation}
then it is clear that \begin{equation}
    W_c(P)=\left\{
    (x,y)\in\RR^2:Q|qx-p||qy-r|<\eta^2
    \right\}.
\end{equation}
In particular, we have \begin{equation}\label{E:unionWcP}
    \bigcup_{P\in\cP,q\leq Q}W_c(P)=\left\{
    (x,y)\in\RR^2:Q\cdot \min_{1\leq q\leq Q}\la qx\ra\la qy\ra<\eta^2
    \right\}.
\end{equation}
The following corollary is immediate: 

\begin{corollary}\label{C:Eeta}
Let $(Q_m)_{m\geq 0}$ be a strictly increasing sequence of positive numbers with $Q_0>1$ and $Q_m\to\infty$. Then for any $\eta>0$, the sets 
$\RR^2\setminus E\subseteq\RR^2\setminus E_{\bQ}$ are contained in the union of \begin{equation}\label{E:Eeta0}
    \left\{
    (x,y)\in\RR^2:Q_0\cdot \min_{1\leq q\leq Q_0}\la qx\ra\la qy\ra<\eta^2
    \right\}
\end{equation}
and
\begin{equation}\label{E:Eeta1}
    \bigcup_{m\geq 1}\bigcup_{\substack{P\in\cP,\\ Q_{m-1}<q\leq Q_m}}
    \Big(\bigcup_{s\in [0,1]}\Delta_{c(P,Q_m,\eta)}(P;s)\cup \Delta^1_{c(P,Q_m,\eta)}(P)\cup \Delta^2_{c(P,Q_m,\eta)}(P)\Big).
\end{equation}
\end{corollary}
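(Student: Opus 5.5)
The inclusion $\RR^2\setminus E\subseteq\RR^2\setminus E_{\bQ}$ is just $E_{\bQ}\subseteq E$. Take $(x,y)\in E_{\bQ}$. Then there are $\delta>0$ and $m_0$ such that $Q_m\min_{1\le q\le Q_m}\la qx\ra\la qy\ra\ge\delta$ for all $m\ge m_0$. Since $Q_m\to\infty$, the $\limsup$ over $Q\to+\infty$ of $Q\cdot\min_{1\le q\le Q}\la qx\ra\la qy\ra$ is at least $\delta>0$. This is the easy half of Lemma \ref{L:LCULC}, and it can be verified directly here.

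For the main inclusion, I would argue by contraposition. Suppose $(x,y)$ lies neither in \eqref{E:Eeta0} nor in \eqref{E:Eeta1}; I will show $(x,y)\in E_{\bQ}$. Not lying in \eqref{E:Eeta0} means
$$Q_0\min_{1\le q\le Q_0}\la qx\ra\la qy\ra\ge\eta^2.$$
Now fix $m\ge1$ and any $q$ with $Q_{m-1}<q\le Q_m$. Pick integers $p,r$ with $|qx-p|=\la qx\ra$ and $|qy-r|=\la qy\ra$. Let $g=\gcd(p,r,q)$ and set $P=(p/q,r/q)$ written in lowest terms, with denominator $q'=q/g$.

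The delicate point is that this reduced denominator $q'$ may drop below $Q_{m-1}$. In that case it does not fall into the $m$-th block of \eqref{E:Eeta1}. The cleanest way around this is to argue by induction, or more simply as follows. The quantity $\la qx\ra\la qy\ra$ is at least $g^2\la q'x\ra\la q'y\ra$, because $|qx-p|=g|q'x-p'|$ and similarly for $y$. So it suffices to bound $Q_m\la q'x\ra\la q'y\ra$ from below for the reduced denominator $q'$, and only reduced triples need to be considered.

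For a reduced $P$ with denominator $q'$ satisfying $Q_{k-1}<q'\le Q_k$ for some $k\le m$, not lying in \eqref{E:Eeta1} gives, via Lemma \ref{L:WcP} with $c=c(P,Q_k,\eta)$, that $(x,y)\notin W_c(P)$. That is,
$$Q_k|q'x-p'||q'y-r'|\ge\eta^2.$$
If $q'\le Q_0$, I use \eqref{E:Eeta0} instead. Since $Q_k\le Q_m$, in either case we obtain $Q_m\la q'x\ra\la q'y\ra\ge\eta^2$, using that $p',r'$ are now arbitrary integers in the reduced representation.

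Combining these bounds, $Q_m\min_{1\le q\le Q_m}\la qx\ra\la qy\ra\ge\eta^2$ for every $m$. Hence $(x,y)\in E_{\bQ}$, and the contrapositive is proved.

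The only real obstacle is the bookkeeping of non-reduced fractions and of which block $Q_{k-1}<q'\le Q_k$ the reduced denominator falls into. This is handled by the monotonicity $Q_k\le Q_m$ together with the factor $g^2\ge1$. The identity \eqref{E:unionWcP} packages exactly this argument.
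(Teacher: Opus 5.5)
Your proof is correct and is essentially the paper's argument in contrapositive form. The paper uses the identity \eqref{E:unionWcP}, the fact that $c(P,Q,\eta)$ decreases in $Q$ (so each primitive $P$ can be assigned to the first block containing its denominator), and then Lemma \ref{L:WcP}; you make the reduction to primitive triples (the factor $g^2\ge 1$) and the monotonicity $Q_k\le Q_m$ explicit. Your key step is cleanest written as $\la qx\ra\la qy\ra=g^2|q'x-p'||q'y-r'|\ge|q'x-p'||q'y-r'|\ge\eta^2/Q_k$ (taking $k=0$ when $q'\le Q_0$). This holds for every $q\le Q_m$, not only for $Q_{m-1}<q\le Q_m$, and it needs no statement about nearest integers to $q'x$ and $q'y$.
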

\begin{proof}
It is clear from \eqref{E:noULC} and \eqref{E:EQ} that for any $\eta>0$, $$
\RR^2\setminus E\subseteq \RR^2\setminus E_{\bQ}\subseteq \bigcup_{m\geq 0}\left\{
    (x,y)\in\RR^2:Q_m\cdot \min_{1\leq q\leq Q_m}\la qx\ra\la qy\ra<\eta^2
    \right\}.
$$
By \eqref{E:choosec} and \eqref{E:unionWcP}, we see that the above right-hand side equals\begin{equation}\label{E:changeW}
\begin{aligned}
\bigcup_{m\geq 0}\bigcup_{P\in \cP,q\leq Q_m}W_{c(P,Q_m,\eta)}(P)
={}&\bigcup_{\substack{P\in\cP,\\q\leq Q_0}}W_{c(P,Q_0,\eta)}(P)\\
&\cup\bigcup_{m\geq 1}\bigcup_{\substack{P\in\cP,\\Q_{m-1}<q\leq Q_m}}W_{c(P,Q_m,\eta)}(P).
\end{aligned}
\end{equation}
Here $c(P,Q,\eta)$ decreases as $Q$ increases, so for a fixed
denominator, the first scale at which it occurs contains all its later contributions.
For the right-hand side of \eqref{E:changeW}: the term $m=0$ is \eqref{E:Eeta0}; by Lemma \ref{L:WcP}, the union of other terms $m\geq 1$ is contained in \eqref{E:Eeta1}. The proof is complete.
\end{proof}

\subsection{Simplex lemmata} In view of Corollary \ref{C:Eeta}, for the sake of convenience, we refer to dangerous sets of the forms \begin{equation}\label{E:type}
\Delta_c(P;s),\quad \Delta^1_c(P),\quad \Delta^2_c(P)
\end{equation}
as \textsl{Type 0}, \textsl{Type 1}, \textsl{Type 2}-dangerous sets, respectively. Moreover, we introduce the \textit{scales} and the simplex lemmata for these three types of dangerous sets as follows. 

\subsubsection{The scale and multi-weight simplex lemma for Type 0-dangerous sets}
We first recall a general result for rational points.

\begin{lemma}\label{L:attachline}
    To each $P=(\frac{p}{q},\frac{r}{q})\in\cP$ and $s\in [0,1]$, one may attach a rational line $L(P;s): A(P;s)x+B(P;s)y+C(P;s)=0$ passing through $P$ such that \begin{equation}\label{E:ABC}
    \begin{gathered}
    A(P;s),B(P;s),C(P;s)\in\ZZ,\qquad
    \gcd(A(P;s),B(P;s),C(P;s))=1,\\
    |A(P;s)|\leq q^s,\qquad |B(P;s)|\leq q^{1-s}.
    \end{gathered}
    \end{equation}
\end{lemma}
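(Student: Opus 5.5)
The plan is a pigeonhole (Minkowski) argument on the integer coefficients of a line through $P$. A line $Ax+By+C=0$ passes through $P=(\frac{p}{q},\frac{r}{q})$ if and only if
\[
Ap+Br+Cq=0 .
\]
So I only need integers $(A,B)\neq(0,0)$ with $|A|\le q^s$, $|B|\le q^{1-s}$ and $Ap+Br\equiv 0 \pmod q$. Given such a pair, I set $C=-(Ap+Br)/q\in\ZZ$.

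To find $(A,B)$, consider the map
\[
(A,B)\mapsto Ap+Br \bmod q
\]
on the box of integer pairs with $0\le A\le \lfloor q^s\rfloor$ and $0\le B\le \lfloor q^{1-s}\rfloor$. The box contains
\[
(\lfloor q^s\rfloor+1)(\lfloor q^{1-s}\rfloor+1) > q^s\cdot q^{1-s}=q
\]
pairs. Since there are only $q$ residues, two distinct pairs have the same image. Their difference $(A,B)$ is a nonzero integer pair with $|A|\le q^s$, $|B|\le q^{1-s}$ and $Ap+Br\equiv0\pmod q$. The case $q=1$ is included: any single pair works, for instance $(A,B)=(1,0)$, $C=-p$, since $q^s=1$.

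Next comes the gcd normalization. Let $g=\gcd(A,B,C)$ and divide all three coefficients by $g$. The line is unchanged and still passes through $P$, and the bounds $|A|\le q^s$, $|B|\le q^{1-s}$ survive because the absolute values only decrease. The new $(A,B)$ is still nonzero, so the equation defines a genuine line. I will also note that the attachment can be made as a fixed choice for each $(P,s)$, since the statement only asks for existence.

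There is no real obstacle here. The one point needing care is that the box has strictly more than $q$ points. This holds because $\lfloor t\rfloor+1>t$ for every real $t$, which gives the strict inequality above for all $s\in[0,1]$, including the endpoint cases $s=0,1$ where one side of the box has length $1$ (the inequality $|A|\le q^0$ still allows $A=\pm1$).
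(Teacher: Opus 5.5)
Your pigeonhole argument is correct and complete, including the normalization by $\gcd(A,B,C)$ and the observation that $(A,B)\neq(0,0)$, which the paper later needs for $q\le H(P;s)$. The paper gives no argument of its own and simply cites Lemma 3.1 of An's paper, which (to my recollection) rests on the same Dirichlet/Minkowski-type counting: a nonzero integer $(A,B,C)$ with $|Ap+Br+Cq|<1$, $|A|\le q^s$, $|B|\le q^{1-s}$. Your proof is thus essentially a self-contained version of the cited one.
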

\begin{proof}
    See \cite[Lemma 3.1]{An16}. 
\end{proof}

For each $P=(\frac{p}{q},\frac{r}{q})\in\cP$ and $s\in [0,1]$, we choose once and for all a rational line $L(P;s)$ as in Lemma \ref{L:attachline}. Then we define the associated \textit{height} as \begin{equation}\label{E:height}
    H(P;s):=q\cdot \max\{|A(P;s)|,|B(P;s)|\}.
\end{equation}
It follows from \eqref{E:ABC} that \begin{equation}\label{E:Hrange}
    q\leq H(P;s)\leq q^{1+\max\{s,1-s\}}.
\end{equation}
The scale of the dangerous set $\Delta_c(P;s)$ is determined by the denominator $q$ and the height $H(P;s)$.

The next result is the multi-weight simplex lemma for Type 0-dangerous sets. It basically says that all $\Delta_c(P;s)$ that intersect a ball $B$ and have suitable denominators and heights must be contained in the union of two common line neighborhoods. Readers may compare this result with \cite[Lemma 5.5]{GW26}.

\begin{lemma}\label{L:simplex0}
Let $R\geq 2$, $Q,H\geq 1$, $c_*>0$, and $B=B(z,\rho)$. Suppose that there is an indexed family $$
\{(P_i,s_i,c_i):i\in I\},\quad \text{where}\quad P_i\in\cP,s_i\in [0,1],c_i>0,
$$
such that for each $i\in I$, $$
R^{-1}Q<q_i\leq Q,\quad H\leq H(P_i;s_i)<RH,\quad 0<c_i\leq c_*, \quad\text{and}\quad \Delta_{c_i}(P_i;s_i)\cap B\neq \varnothing.
$$ 

If \begin{equation}\label{E:condition0}
    4R^2(c_*+\rho H)<1,
\end{equation}
then all $P_i\,(i\in I)$ with $s_i\in [0,\frac{1}{2}]$ lie on one rational line, and all $P_i\,(i\in I)$ with $s_i\in [\frac{1}{2},1]$ lie on another rational line. Consequently, all corresponding $\Delta_{c_i}(P_i;s_i)$ are contained in the $\frac{3c_*R^3}{2H}$-neighborhood of these two lines. 
\end{lemma}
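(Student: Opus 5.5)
The plan is to prove directly that every $P_i$ with $s_i\in[0,\frac12]$ lies on one fixed attached line $L(P_j;s_j)$ from Lemma \ref{L:attachline}, and similarly for $s_i\in[\frac12,1]$. The tool is the usual integrality trick: a nonzero integer has absolute value at least $1$.

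\textbf{Step 1 (collinearity).} Fix $j\in I$ with $s_j\in[0,\frac12]$. Write $A=A(P_j;s_j)$, $B=B(P_j;s_j)$, $C=C(P_j;s_j)$, and $P_i=(x_i,y_i)=(\frac{p_i}{q_i},\frac{r_i}{q_i})$. The quantity $N_{ij}:=|Ap_i+Br_i+Cq_i|$ is an integer. Since $L(P_j;s_j)$ passes through $P_j$,
$$N_{ij}=q_i\,|A(x_i-x_j)+B(y_i-y_j)|.$$
Both boxes $\Delta_{c_i}(P_i;s_i)$ and $\Delta_{c_j}(P_j;s_j)$ meet $B=B(z,\rho)$. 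Hence
$$|x_i-x_j|<\frac{c_*}{q_i^{1+s_i}}+\frac{c_*}{q_j^{1+s_j}}+2\rho,\qquad |y_i-y_j|<\frac{c_*}{q_i^{2-s_i}}+\frac{c_*}{q_j^{2-s_j}}+2\rho .$$
I bound each resulting term of $N_{ij}$ using two facts: $|A|\le q_j^{s_j}$, $|B|\le q_j^{1-s_j}$ from \eqref{E:ABC}; and $|A|,|B|\le H(P_j;s_j)/q_j<RH/q_j$ from \eqref{E:height}.

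The $P_j$-box terms are harmless. For example, $q_i|A|c_*q_j^{-1-s_j}\le c_*q_i/q_j<c_*R$, and similarly for the $B$-term.

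The $\rho$-terms give at most $2\rho q_i(|A|+|B|)<4\rho R^2H$.

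The delicate terms come from the $P_i$-box, where $s_i$ and $s_j$ may differ; this is the main obstacle. Using only \eqref{E:ABC} would produce factors like $q^{s_j-s_i}$, which can be unbounded. The fix is to use the height bound together with \eqref{E:Hrange} applied to $P_i$. Since $s_i\le\frac12$, we have $H\le H(P_i;s_i)\le q_i^{2-s_i}$. Therefore
$$q_i|B|\frac{c_*}{q_i^{2-s_i}}\le \frac{c_*RH\,q_i^{s_i-1}}{q_j}\le \frac{c_*R\,q_i}{q_j}<c_*R^2,\qquad q_i|A|\frac{c_*}{q_i^{1+s_i}}\le\frac{c_*R\,q_i^{2-2s_i}}{q_j}\le c_*R^2q_i^{1-2s_i}\cdot q_i^{-1}\cdot R\,q_i/q_j ,$$
and since $q_i^{1-2s_i}\le q_i$, the second is also $\lesssim c_*R^2$. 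Summing, $N_{ij}<4R^2(c_*+\rho H)<1$ by \eqref{E:condition0}. Hence $N_{ij}=0$, so $P_i\in L(P_j;s_j)$. This works for every $i$ with $s_i\le\frac12$, and $j$ is fixed.

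For $s_i\in[\frac12,1]$ the argument is symmetric under swapping the roles of $x,y$ (and of $A,B$, $s$ and $1-s$), using $H(P_i;s_i)\le q_i^{1+s_i}$.

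\textbf{Step 2 (containment).} Let $w\in\Delta_{c_i}(P_i;s_i)$ and let $L$ be the common line. The distance from $w$ to $L$ is
$$\frac{|A(w_x-x_i)+B(w_y-y_i)|}{\sqrt{A^2+B^2}}.$$
The numerator was essentially bounded in Step 1: it is at most $(c_*R^2+c_*R^2)/q_i$, using the same two estimates divided by $q_i$. The denominator is at least $\max(|A|,|B|)=H(P_j;s_j)/q_j\ge H/q_j$. Using $q_j/q_i<R$, this gives a distance $\lesssim c_*R^3/H$. Keeping track of constants more carefully (for instance via $\sqrt{A^2+B^2}$ rather than $\max$, and the exact term-by-term bounds) should yield the stated $\frac{3c_*R^3}{2H}$.

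Step 1's cross-weight estimate, which uses the height lower bound $H$ against \eqref{E:Hrange}, is the crux. The remaining steps are bookkeeping.
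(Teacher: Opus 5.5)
\textbf{There is a genuine gap in Step 1.} It sits in the very term you singled out as delicate: the $A$-part of the $P_i$-box contribution, $q_i|A|\,c_*q_i^{-1-s_i}=c_*|A|\,q_i^{-s_i}$.

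\begin{itemize}
\item Your first bound, $c_*|A|q_i^{-s_i}\le c_*R\,q_i^{2-2s_i}/q_j$, is fine.
\item The next inequality in your chain holds only when $q_i\le R^2$.
\item What actually follows is $c_*R\,q_i^{2-2s_i}/q_j<c_*R^2q_i^{1-2s_i}$. For $s_i=0$ this is $c_*R^2q_i$, which is unbounded. The remark ``$q_i^{1-2s_i}\le q_i$'' does not make it bounded.
\item Using \eqref{E:ABC} instead only gives $c_*q_j^{s_j}q_i^{-s_i}$. This is of size $c_*\sqrt{q_j}$ when $s_i=0$ and $s_j=\tfrac12$.
\end{itemize}

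So with an \emph{arbitrary} reference index $j$, the available bounds do not make this term small, and the integrality argument does not close. Step 2 reuses the same faulty bound in its numerator. Your $B$-term estimate, via $H(P_j;s_j)<RH\le RH(P_i;s_i)\le Rq_i^{2-s_i}$, is correct and is exactly the paper's.

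\textbf{The missing idea is to take the reference line at the smallest weight in the half-family.} Since $I$ may be infinite and $s_*=\inf s_i$ need not be attained, the paper proceeds as follows.
\begin{itemize}
\item Choose $s_{i_k}\to s_*$, with $s_{i_k}\equiv s_*$ if the infimum is attained.
\item Only finitely many centers $P_{i_k}$ can occur: the denominators are at most $Q$, and each coordinate of $P_{i_k}$ lies within $c_*+\rho$ of $z$. Each center also has only finitely many attached triples.
\item Pass to a subsequence along which $P_{i_k}=P_*$ and $(A,B,C)=(A_*,B_*,C_*)$ are constant.
\end{itemize}

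For a given $i$, pick $k$ with $s_{i_k}\le s_i$. Then $|A_*|\le q_*^{s_{i_k}}\le q_*^{s_i}$, so
$c_*|A_*|q_i^{-s_i}\le c_*(q_*/q_i)^{s_i}<c_*R^{1/2}$, using $s_i\le\tfrac12$ and $q_*<Rq_i$. This yields $|N_i|\le c_*(R^{1/2}+2R+R^2)+4R^2\rho H<1$.

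In Step 2 the numerator becomes at most $c_*(R^{1/2}+R^2)/q_i$, and this gives exactly the radius $\frac{3c_*R^3}{2H}$. For the half-family $s_i\in[\tfrac12,1]$, the symmetric argument must likewise use the supremum of the weights, not an arbitrary index.
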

\begin{remark}
    For each nonempty half-family, its common line may be chosen to be an actual attached line $L(P_{i_*};s_{i_*}):A_*x+B_*y+C_*=0$ from that half-family.  If $q_*$ is the denominator of $P_{i_*}$ and $\xi_*=\max\{|A_*|,|B_*|\}$, then
\begin{equation}\label{E:simplex-label}
 |A_*\cdot B_*|\leq q_*,
 \qquad H\leq q_*\xi_*<RH.
\end{equation}
\end{remark}

\begin{proof}
By symmetry, we only deal with the subfamily $\{(P_i,s_i,c_i):i\in I,s_i\in [0,\frac{1}{2}]\}$. If this subfamily is empty, then there is nothing to prove. Otherwise, we put $$
s_*:=\inf_{i\in I}s_i\in \left[0,\frac{1}{2}\right].
$$
Then there exists a sequence $(i_k)_{k\geq 1}$ in $I$ with $s_{i_k}\in [0,\frac{1}{2}]$ and $s_{i_k}\to s_*$. If $s_*$ is attained, we simply take the approximating sequence to be $s_{i_k}\equiv s_*$.

Since there are only finitely many denominators $R^{-1}Q<q_{i_k}\leq Q$ and finitely many numerators $(p_{i_k},r_{i_k})$ by the condition $$
\Delta_{c_{i_k}}(P_{i_k};s_{i_k})\cap B(z,\rho)\neq \varnothing, \quad \text{where}\quad z=(z_1,z_2),
$$
that is,
$$
\left|\frac{p_{i_k}}{q_{i_k}}-z_1\right|\leq \frac{c_{i_k}}{q_{i_k}^{1+s_{i_k}}}+\rho\leq c_*+\rho,\quad\text{and}\quad \left|\frac{r_{i_k}}{q_{i_k}}-z_2\right|\leq \frac{c_{i_k}}{q_{i_k}^{2-s_{i_k}}}+\rho\leq c_*+\rho,
$$
we see that along this sequence only finitely many centers $P_{i_k}$ can occur. After passing to a subsequence, we may assume that $P_{i_k}=P_*$ for all $k\geq 1$. Write $P_*=(\frac{p_*}{q_*},\frac{r_*}{q_*})$. Moreover, since the attached lines $L(P_*;s_{i_k})$ are determined by the integer triples $(A(P_*;s_{i_k}),B(P_*;s_{i_k}),C(P_*;s_{i_k}))$ where $$
|A(P_*;s_{i_k})|\leq q_{i_k}^{s_{i_k}}\leq Q^{1/2},\quad 
|B(P_*;s_{i_k})|\leq q_{i_k}^{1-s_{i_k}}\leq Q,\quad 
$$
and $$
C(P_*;s_{i_k})=-\frac{A(P_*{;}s_{i_k})p_*+B(P_*{;}s_{i_k})r_*}{q_*},
$$
there are only finitely many lines attached to the point $P_*$. After passing to a further subsequence, we may assume that $$(A(P_*;s_{i_k}),B(P_*;s_{i_k}),C(P_*;s_{i_k}))=(A_*,B_*,C_*)\text{ for all }k\geq 1.$$
    We write $L_*:A_*x+B_*y+C_*=0$. Since this triple occurs along the subsequence, fix one of its indices and call it $i_*$; thus $L_*=L(P_{i_*};s_{i_*})$.
    We claim that: \begin{equation}\label{E:sameline}
    \text{all }P_i\,(i\in I)\text{ with }s_i\in \left[0,\frac{1}{2}\right] \text{ lie on }L_*.
\end{equation}

In fact, for each $i\in I$ with $s_i\in [0,\frac{1}{2}]$, we consider the quantity $$
N_i:=A_*p_i+B_*r_i+C_*q_i\in\ZZ.
$$ 
To show that $P_i\in L_*$, it suffices to show that $N_i=0$, or equivalently, $|N_i|<1$. Since $P_*\in L_*$, we have \begin{equation}\label{E:Ni}
N_i=q_i\left(
A_*\left(\frac{p_i}{q_i}-\frac{p_*}{q_*}\right)+B_*\left(\frac{r_i}{q_i}-\frac{r_*}{q_*}\right)
\right).
\end{equation}
Now we estimate the right-hand side of \eqref{E:Ni}.

We fix any $i\in I$ with $s_i\in [0,\frac{1}{2}]$, and choose some $i_k$ with $s_{i_k}\leq s_i$. Note that 
\begin{subequations}
\begin{gather}
    \frac{|A_*|}{q_i^{s_i}}\overset{\eqref{E:ABC}}{\leq} \frac{q_*^{s_{i_k}}}{q_i^{s_i}}\leq \left(\frac{q_*}{q_i}\right)^{s_i}<R^{1/2}; \qquad 
    \frac{q_i|A_*|}{q_*^{1+s_{i_k}}}\overset{\eqref{E:ABC}}{\leq} \frac{q_i}{q_*}<R; \label{E:calA}\\
    \frac{|B_*|}{q_i^{1-s_i}}\overset{\eqref{E:height}}{\leq} \frac{H(P_*;s_{i_k})}{q_*q_i^{1-s_i}}<\frac{RH(P_i;s_i)}{q_*q_i^{1-s_i}}
    \overset{\eqref{E:Hrange}}{\leq} \frac{Rq_i}{q_*}<R^2;
    \qquad
    \frac{q_i|B_*|}{q_*^{2-s_{i_k}}}\overset{\eqref{E:ABC}}{\leq} \frac{q_i}{q_*}<R. \label{E:calB}
\end{gather}
\end{subequations}
Moreover, \begin{equation}\label{E:calAB}
    q_i\rho(|A_*|+|B_*|)\overset{\eqref{E:height}}{\leq} \frac{2q_i\rho H(P_*;s_{i_k})}{q_*}<2R^2\rho H.
\end{equation}
Since both $\Delta_{c_i}(P_i;s_i)$ and $\Delta_{c_{i_k}}(P_*;s_{i_k})$ intersect $B(z,\rho)$, we have $$
\left|\frac{p_i}{q_i}-\frac{p_*}{q_*}\right|\leq \frac{c_i}{q_i^{1+s_i}}+\frac{c_{i_k}}{q_*^{1+s_{i_k}}}+2\rho,
\quad \text{and}\quad 
\left|\frac{r_i}{q_i}-\frac{r_*}{q_*}\right|\leq \frac{c_i}{q_i^{2-s_i}}+\frac{c_{i_k}}{q_*^{2-s_{i_k}}}+2\rho.
$$
It follows that 
    \begin{align*}
        |N_i|&\leq q_i\left(
        |A_*|\left|\frac{p_i}{q_i}-\frac{p_*}{q_*}\right|
        +|B_*|\left|\frac{r_i}{q_i}-\frac{r_*}{q_*}\right|
        \right)\tag{by \eqref{E:Ni}}\\
        &\leq q_i\left(
        c_*\left(\frac{|A_*|}{q_i^{1+s_i}}+\frac{|A_*|}{q_*^{1+s_{i_k}}}+\frac{|B_*|}{q_i^{2-s_i}}+\frac{|B_*|}{q_*^{2-s_{i_k}}}\right)
        +2\rho(|A_*|+|B_*|)
        \right)\\
        &\leq c_*\left(R^{1/2}+R+R^2+R\right)+4R^2\rho H
        \tag{by \eqref{E:calA}\eqref{E:calB}\eqref{E:calAB}}
        \\
        &<1. \tag{by \eqref{E:condition0}}
    \end{align*}
This completes the proof of the claim \eqref{E:sameline}.

    Finally, let $(x,y)\in \Delta_{c_i}(P_i;s_i)$. Then we see that \begin{align*}
        |A_*x+B_*y+C_*|&=\left|A_*\left(x-\frac{p_i}{q_i}\right)+B_*\left(y-\frac{r_i}{q_i}\right)\right|
        \tag{by \eqref{E:sameline}}\\
        &\leq c_*\left(\frac{|A_*|}{q_i^{1+s_i}}+\frac{|B_*|}{q_i^{2-s_i}}\right)
        \\
        &\leq \frac{c_*}{q_i}(R^{1/2}+R^2) \tag{by \eqref{E:calA}\eqref{E:calB}}.
    \end{align*}
    Moreover, $$
    \sqrt{A_*^2+B_*^2}\overset{\eqref{E:height}}{\geq} \frac{H(P_*;s_{i_k})}{q_*}>\frac{H}{Rq_i}.
    $$
    So we conclude that the distance of $(x,y)$ to the line $L_*$ is $$
    \frac{|A_*x+B_*y+C_*|}{\sqrt{A_*^2+B_*^2}}<\frac{c_*R(R^{1/2}+R^2)}{H}\leq \frac{3c_*R^3}{2H}.
    $$
    This completes the proof.
\end{proof}

\subsubsection{The scale and simplex lemma for Type 1 and Type 2-dangerous sets} By symmetry, we only deal with Type 1-dangerous sets. For each $P=(\frac{p}{q},\frac{r}{q})\in\cP$, we define its height as $H(P):=q^2$. The scale of the dangerous set $\Delta^1_c(P)$ is determined by the height $H(P)$. The next result is the one-dimensional simplex lemma for Type 1-dangerous sets.

\begin{lemma}\label{L:simplex1}
Let $R\geq 2$, $H\geq 1$, $c_*>0$, and $B=B(z,\rho)$. Suppose that there is an indexed family $$
\{(P_i,c_i):i\in I\},\quad \text{where}\quad P_i\in\cP,c_i>0,
$$
such that for each $i\in I$, $$
H\leq H(P_i)<RH,\quad 0<c_i\leq c_*,\quad \text{and}\quad \Delta^1_{c_i}(P_i)\cap B\neq\varnothing.
$$
If \begin{equation}\label{E:condition1}
    2R(c_*+\rho H)<1,
\end{equation}
then all $\frac{p_i}{q_i}\,(i\in I)$ are equal. Consequently, all $\Delta^1_{c_i}(P_i)$ are contained in a vertical line neighborhood of radius $\frac{c_*}{H}$.
\end{lemma}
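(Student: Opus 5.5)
The argument is a one-dimensional version of the proof of Lemma \ref{L:simplex0}. We compare two fractions with a cross-multiplication and use integrality. Take any $i,j\in I$. Since $H(P_i)=q_i^2$ and $H\leq q_i^2<RH$, both denominators satisfy
\[
\sqrt{H}\leq q_i,\,q_j<\sqrt{RH}, \qquad\text{so}\qquad q_iq_j<RH .
\]
The condition $\Delta^1_{c_i}(P_i)\cap B\neq\varnothing$ gives a point of $B$ whose first coordinate $x$ satisfies $|x-\frac{p_i}{q_i}|<\frac{c_i}{q_i^2}$ and $|x-z_1|\leq\rho$, where $z=(z_1,z_2)$. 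Hence
\[
\left|\frac{p_i}{q_i}-z_1\right|<\frac{c_*}{q_i^2}+\rho\leq\frac{c_*}{H}+\rho ,
\]
and the same bound holds for $j$. The triangle inequality then yields
\[
\left|\frac{p_i}{q_i}-\frac{p_j}{q_j}\right|<\frac{2c_*}{H}+2\rho .
\]

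Next consider the integer $N:=p_iq_j-p_jq_i$. Multiplying the previous estimate by $q_iq_j<RH$ gives
\[
|N|<RH\left(\frac{2c_*}{H}+2\rho\right)=2R(c_*+\rho H)<1
\]
by \eqref{E:condition1}. Since $N$ is an integer, $N=0$, so $\frac{p_i}{q_i}=\frac{p_j}{q_j}$. Note that the $q_i$ here are the common denominators of points of $\cP$, so the fractions $p_i/q_i$ need not be in lowest terms. The cross-multiplication argument does not require reducedness, so this causes no difficulty.

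Finally, call the common value $x_*$. Every $\Delta^1_{c_i}(P_i)$ is the strip $|x-x_*|<\frac{c_i}{q_i^2}\leq\frac{c_*}{H}$, because $q_i^2\geq H$. So all these sets lie in the neighborhood of radius $\frac{c_*}{H}$ of the vertical line $x=x_*$.

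Nothing here is a real obstacle. The only step needing care is to bound the product $q_iq_j$ by $RH$ using the height window, which is exactly what makes the constant in \eqref{E:condition1} work.
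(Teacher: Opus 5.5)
Your proof is correct and follows the paper's argument: you bound $\bigl|\frac{p_i}{q_i}-\frac{p_j}{q_j}\bigr|$ by $\frac{2(c_*+\rho H)}{H}$ using the common ball, use $q_iq_j<RH$ from the height window to force the integer $p_iq_j-p_jq_i$ to vanish, and get the $\frac{c_*}{H}$-strip from $q_i^2\geq H$. Your remark that the fractions need not be in lowest terms is a correct point of care, and it causes no difficulty in either version.
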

\begin{proof}
    For any $i,j\in I$,  since both $\Delta^1_{c_i}(P_i)$ and $\Delta^1_{c_j}(P_j)$ intersect $B(z,\rho)$, we have $$
    \left|\frac{p_i}{q_i}-\frac{p_j}{q_j}\right|\leq \frac{c_i}{q_i^2}+\frac{c_j}{q_j^2}+2\rho\leq \frac{2(c_*+\rho H)}{H}
    \overset{\eqref{E:condition1}}{<}\frac{1}{RH}<\frac{1}{q_iq_j}.
    $$
    So $|p_iq_j-p_jq_i|<1$ and $p_iq_j-p_jq_i\in\ZZ$ together imply that $\frac{p_i}{q_i}=\frac{p_j}{q_j}$. Finally, all $\Delta^1_{c_i}(P_i)$ are contained in a neighborhood of the common vertical line $L^1:x=\frac{p_i}{q_i}$ with radius $\frac{c_i}{q_i^2}\leq \frac{c_*}{H}$.
\end{proof}

\section{Potential Estimates}\label{SS:potential} In this section, we define and calculate the \textit{total potential} of line neighborhoods that are chosen by simplex lemmata for several scales. This will be crucial to Alice's strategy in the next section.

We fix two parameters $\beta\in (0,1)$ and $\gamma>0$. For an affine line $L\subseteq \RR^2$ and $r>0$, we write $$
L^{(r)}:=\{z\in\RR^2:\dist(z,L)\leq r\}.
$$
Whenever $L$ is rational, we choose its primitive integral equation
$Ax+By+C=0$ and write
\begin{equation}\label{E:line-height}
\xi(L):=\max\{|A|,|B|\}.
\end{equation}
For a countable collection of affine line neighborhoods $\cL:=\{L_j^{(r_j)}:j\in J\}$, we define its \textit{total $\gamma$-potential} as \begin{equation}\label{E:potential}
    \varphi(\cL):=\sum_{j\in J}r_j^{\gamma}.
\end{equation}
We choose an integer $R>\max\{4,\beta^{-1}\}$ sufficiently large so that \begin{equation}\label{E:Rcondition}
    2^{1-\gamma}3^{1+\gamma}R^{-\gamma}\leq \beta^{2\gamma}(1-R^{-\gamma})(1-R^{-\gamma/2}),
\end{equation}
and choose $\eta>0$ sufficiently small so that \begin{equation}\label{E:eta}
    0<\eta<\frac{1}{4R^2(1+R^5)}.
\end{equation}
We also write $\theta=\eta R^4$.

Before proceeding to our key propositions, we first summarize some notations for three types of dangerous sets. Let $\widetilde{\cD}$ denote the collection of all dangerous sets as in \eqref{E:type}. For any $\Delta\in\widetilde{\cD}$ as a labeled object together with its defining parameters, we write \begin{align*}
q(\Delta)&:=\text{the denominator of }P;\\
c(\Delta)&:=\text{the parameter }c;\\
H(\Delta)&:={
\begin{cases}
    H(P;s), &\text{if }\Delta=\Delta_c(P;s)\\
    H(P), &\text{if }\Delta=\Delta^1_c(P)\text{ or }\Delta^2_c(P)
\end{cases};
}
\\
j(\Delta)&:=\left\lfloor \frac{\log{H(\Delta)}}{\log{R}}\right\rfloor.
\end{align*}
Given a ball $B=B(z,\rho)$, when considering those dangerous sets $\Delta$ with $\Delta\cap B\neq\varnothing$ and with certain level of denominators, we will group them according to their heights: \begin{enumerate}
    \item[(i)] $\rho\cdot R^{j(\Delta)}<\theta R$. In this case, we just consider the ball $B=B(z,\rho)$, and study those dangerous sets with smaller heights;
    \item[(ii)] $\rho\cdot R^{j(\Delta)}\geq \theta R$. In this case, since $R>\beta^{-1}$, we have $\beta\rho>\theta R^{-j(\Delta)}$, so there exists a unique $n\ge 1$ with 
    $\beta^n\rho>\theta R^{-j(\Delta)}\geq \beta^{n+1}\rho$. We will consider another ball $B'=B(z',\rho')$ where $\rho'=\beta^{n+1}\rho$, and study the case $$
    \theta R^{-j(\Delta)}\geq \rho'>\beta\cdot\theta R^{-j(\Delta)}.
    $$
\end{enumerate}

\subsection{Total potential of affine line neighborhoods with small heights}
Given $R\leq Q_-<Q$ and $B=B(z,\rho)$, we consider the following family of dangerous sets \begin{equation}\label{E:defD}
\cD^{s}(Q_-,Q;B):=\left\{
\Delta\in\widetilde{\cD}\;\middle|\;
\begin{aligned}
&Q_-<q(\Delta)\leq Q,\quad c(\Delta)=c(P,Q,\eta),\\
&\rho\cdot R^{j(\Delta)}<\theta R,\quad \Delta\cap B(z,\rho)\neq \varnothing
\end{aligned}
\right\}.
\end{equation}
The first potential estimate concerns those affine line neighborhoods covering all dangerous sets in $\cD^{s}(Q_-,Q;B)$.

\begin{proposition}\label{P:potential1}
Let $R\leq Q_-<Q$ and $B=B(z,\rho)$. Suppose that \begin{equation}\label{E:QQ-condition}
    Q\geq \frac{\theta^2R^4}{\rho^2Q_-}.
\end{equation}
Then there exists a finite family $\cL^{s}(Q_-,Q;B)$ of affine line neighborhoods so that $$
\bigcup_{\Delta\in \cD^{s}(Q_-,Q;B)}\Delta\subseteq \bigcup_{L^{(r)}\in \cL^{s}(Q_-,Q;B)}L^{(r)},
\qquad 
\text{and} \qquad
\varphi(\cL^{s}(Q_-,Q;B))<(\beta\rho)^{\gamma}.
$$
The family can be chosen so that every $L^{(r)}$ in it has rational
central line and
\begin{equation}\label{E:potential1-control}
r \xi(L)|A\cdot B|<\frac32\eta R^4,
\end{equation}
where $(A,B,C)$ is the primitive triple defining $L$.
\end{proposition}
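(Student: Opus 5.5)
The plan is to partition $\cD^{s}(Q_-,Q;B)$ into finitely many blocks to which the simplex lemmata (Lemmas \ref{L:simplex0} and \ref{L:simplex1}) apply. Within each block all dangerous sets then lie in at most two line neighborhoods. The potential is summed block by block and the result compared against \eqref{E:Rcondition}.

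\textbf{Blocks and the simplex lemmata.} Write $Q_k:=QR^{-k}$ for $k\geq 0$, keeping only the finitely many $k$ with $Q_k>Q_-/R$. A dangerous set $\Delta$ lies in block $(k,j)$ if
\[
R^{-1}Q_k<q(\Delta)\leq Q_k \quad\text{and}\quad j(\Delta)=j, \text{ i.e. } R^j\leq H(\Delta)<R^{j+1}.
\]
Type 0 sets are handled separately from Types 1 and 2. For a block we take $H=R^j$ and
\[
c_*:=\eta\sqrt{Q_k/Q}\geq c(P,Q,\eta).
\]
Since $\rho R^{j}<\theta R=\eta R^5$ and $c_*\leq\eta$, we get $4R^2(c_*+\rho H)<4R^2\eta(1+R^5)<1$ by \eqref{E:eta}. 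Hence \eqref{E:condition0} and \eqref{E:condition1} hold.
\begin{itemize}
\item Type 0: Lemma \ref{L:simplex0} gives at most two rational lines, with neighborhoods of radius $r_{k,j}=\frac{3c_*R^3}{2R^j}$.
\item Type 1 and Type 2: Lemma \ref{L:simplex1} and its symmetric version give one vertical line and one horizontal line, of radius $c_*R^{-j}$.
\end{itemize}
Only finitely many blocks are nonempty: $j$ is bounded above by $\rho R^j<\theta R$, and below because $H\geq q>Q_-/R$. So the family is finite.

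\textbf{The control \eqref{E:potential1-control}.} For Type 0, use the remark after Lemma \ref{L:simplex0}. The line is an attached line with $|A_*B_*|\leq q_*$ and $q_*\xi_*<RH$, so
\[
r\,\xi\,|AB|\leq \frac{3c_*R^3}{2H}\cdot\xi_* q_*<\frac32 c_*R^4\leq\frac32\eta R^4.
\]
For Types 1 and 2 the line is $x=p/q$ or $y=r/q$, so $AB=0$ and the control is trivial.

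\textbf{The potential bound (the main obstacle).} The difficulty is that small heights give large radii, so the hypothesis \eqref{E:QQ-condition} must be used to pay for them. First, $r_{k,j}\lesssim \eta R^3\sqrt{Q_k/Q}\,R^{-j}$. In every nonempty block we have $R^{j+1}>H\geq q>Q_k/R$, so the largest term in $j$ occurs at $R^j\approx Q_k/R^2$. Summing over $j$ therefore costs a factor $(1-R^{-\gamma})^{-1}$, and each $q$-block contributes at most about
\[
\Big(\tfrac32\,\eta R^{5}\,(Q_kQ)^{-1/2}\Big)^{\gamma}.
\]
Next, \eqref{E:QQ-condition} gives $(Q_kQ)^{-1/2}\leq \rho\sqrt{Q_-/Q_k}\,/(\theta R^2)$, using $\theta=\eta R^4$. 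Each block's radius is thus of order $\rho R^{-1}\sqrt{Q_-/Q_k}$. The ratio $\sqrt{Q_-/Q_k}$ decays geometrically in $k$ (read from the small end), so summing over $k$ costs a factor $(1-R^{-\gamma/2})^{-1}$. The Type 1 and Type 2 lines, with $H=q^2$, are dominated in the same way.

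Collecting the constants — a factor $2$ for the two Type 0 half-families, $3$ for the three types, and the powers of $\tfrac32$ — the total potential should be at most $2^{1-\gamma}3^{1+\gamma}R^{-\gamma}\rho^\gamma(1-R^{-\gamma})^{-1}(1-R^{-\gamma/2})^{-1}$. By \eqref{E:Rcondition} this is $\leq\beta^{2\gamma}\rho^\gamma<(\beta\rho)^\gamma$. The delicate step is the exponent bookkeeping: checking that the endpoint blocks (those with $Q_k$ near $Q_-$, and those with $j$ near its lower limit) still fit inside the single factor $R^{-\gamma}$.
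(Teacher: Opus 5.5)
Your proposal is correct and follows essentially the same route as the paper. It uses the same $(k,j)$-blocks with $c_*=\eta R^{-k/2}$, the bound $R^{-j}<R^{2}/Q_k$, and the two geometric sums giving $(1-R^{-\gamma})^{-1}(1-R^{-\gamma/2})^{-1}$; it applies \eqref{E:QQ-condition} to turn $3\theta R(QQ_-)^{-1/2}$ into $3\rho/R$, and the remark after Lemma \ref{L:simplex0} for \eqref{E:potential1-control}. The deviations are minor: you also block Types 1 and 2 by $(k,j)$, whereas the paper blocks them by $j$ alone with $c_{*,j}=\eta\min\{1,\sqrt{R^{(j+1)/2}/Q}\}$, which is harmless since each such contribution is dominated by the Type~0 bound for the same $k$; and your endpoint worry resolves because every nonempty block has $Q_k\geq q>Q_-$, so $\sqrt{Q_-/Q_k}<1$ at the top index.
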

\begin{proof}
Write $\cD^s=\cD^{s}(Q_-,Q;B)$ and 
$$
\cD^{s,i}:=\{\text{Type }i\text{-dangerous sets in }\cD^s\},\quad i\in \{0,1,2\}.
$$
Let us first consider $\cD^{s,0}$. For any pair $(k,j)\in \NN_0^2$ with \begin{equation}\label{E:kjrange}
k<\frac{\log(Q/Q_-)}{\log{R}}\quad\text{and}\quad j>\frac{\log{Q}}{\log{R}}-(k+2),
\end{equation}
we define \begin{equation}
    \cD^{s,0}_{k,j}:=\left\{\Delta\in \cD^{s,0}: R^{-(k+1)}Q<q(\Delta)\leq R^{-k}Q,\;j(\Delta)=j
    \right\}.
\end{equation}
We observe that: \begin{equation}\label{E:D0par}
    \cD^{s,0}=\bigsqcup_{\substack{(k,j)\in \NN_0^2\\ \text{with \eqref{E:kjrange}}}}\cD_{k,j}^{s,0}.
\end{equation}

In fact, for $\Delta\in\cD^{s,0}$, since $q(\Delta)\leq Q$, we may find a unique $k\geq 0$ such that 
$$R^{-(k+1)}Q<q(\Delta)\leq R^{-k}Q.$$
In particular, since $q(\Delta)>Q_-$, we have $k<\frac{\log(Q/Q_-)}{\log{R}}$.
Moreover, we see that $$
j(\Delta)>\frac{\log{H(\Delta)}}{\log{R}}-1\geq \frac{\log{q(\Delta)}}{\log{R}}-1>\frac{\log{Q}}{\log{R}}-(k+2),
$$
so there exists a unique $j>\frac{\log{Q}}{\log{R}}-(k+2)$ such that $j(\Delta)=j$. This verifies \eqref{E:D0par}.

Note that for any $\Delta\in \cD^{s,0}_{k,j}$, $$
4R^2(c(\Delta)+\rho R^{j(\Delta)})\overset{\eqref{E:defD}}{<}4R^2(\eta R^{-k/2}+\theta R)\overset{\eqref{E:eta}}{<}1.
$$
So applying Lemma \ref{L:simplex0} to each $\cD^{s,0}_{k,j}$ gives two affine line neighborhoods of radius $$
r_{k,j}:=\frac{3\eta R^{3-k/2}}{2R^j},
$$ 
whose union covers all dangerous sets in $\cD^{s,0}_{k,j}$. For each nonempty half-family, Lemma \ref{L:simplex0} supplies an attached line $L_*:A_*x+B_*y+C_*=0$.  With its label $q_*$ and $\xi_*$,
\eqref{E:simplex-label} gives $|A_*\cdot B_*|\leq q_*$ and $q_*\xi_*<R^{j+1}$; hence
\[
r_{k,j}\xi_*|A_*\cdot B_*|\leq r_{k,j}\xi_* q_*<\frac32\eta R^{4-k/2}\leq\frac32\eta R^4.
\]
In view of \eqref{E:D0par}, we conclude that the total potential of affine line neighborhoods to cover all dangerous sets in $\cD^{s,0}$ is \begin{equation*}
    \begin{aligned}
        \sum_{\substack{(k,j)\in \NN_0^2\\ \text{with \eqref{E:kjrange}}}}2r_{k,j}^{\gamma}
        &=2^{1-\gamma}(3\eta R^3)^{\gamma}\sum_{0\leq k<\frac{\log(Q/Q_-)}{\log{R}}}R^{-k\gamma/2}\sum_{j>\frac{\log{Q}}{\log{R}}-(k+2)}R^{-j\gamma}\\
        &\leq \frac{2^{1-\gamma}(3\eta R^5Q^{-1})^{\gamma}}{1-R^{-\gamma}}\sum_{0\leq k<\frac{\log(Q/Q_-)}{\log{R}}}R^{k\gamma/2}\\
        &<\frac{2^{1-\gamma}(3\theta R(QQ_-)^{-1/2})^{\gamma}}{(1-R^{-\gamma})(1-R^{-\gamma/2})}.
    \end{aligned}
\end{equation*}
By \eqref{E:QQ-condition} and \eqref{E:Rcondition}, we see that the above right-hand side is not exceeding $\frac{1}{3}(\beta\rho)^{\gamma}$.

Now we consider the $\cD^{s,1}$ case. The $\cD^{s,2}$ case is analogous. For any $j\in \NN_0$ with $j>\frac{2\log{Q_-}}{\log{R}}-1$,
we define \begin{equation}
    \cD^{s,1}_j:=\{\Delta\in\cD^{s,1}:j(\Delta)=j\}.
\end{equation}
Since for any $\Delta\in\cD^{s,1}$, we have $$
j(\Delta)>\frac{\log{H(\Delta)}}{\log{R}}-1=\frac{2\log{q(\Delta)}}{\log{R}}-1>\frac{2\log{Q_-}}{\log{R}}-1,
$$
there exists a unique $j>\frac{2\log{Q_-}}{\log{R}}-1$ such that $j(\Delta)=j$. So we observe that \begin{equation}\label{E:D1par}
    \cD^{s,1}=\bigsqcup_{j>\frac{2\log{Q_-}}{\log{R}}-1}\cD^{s,1}_j.
\end{equation}

Note that for any $\Delta\in\cD^{s,1}_j$,
put
\[
 c_{*,j}:=\eta\min\left\{1,
 \sqrt{\frac{R^{(j+1)/2}}{Q}}\right\}.
\]
Since $j(\Delta)=j$ gives $q(\Delta)<R^{(j+1)/2}$, while
$q(\Delta)\leq Q$, one has $c(\Delta)\leq c_{*,j}$.  Moreover,
$$
2R(c_{*,j}+\rho R^{j(\Delta)})\overset{\eqref{E:defD}}{<}2R(\eta+\theta R)\overset{\eqref{E:eta}}{<}1.
$$
So applying Lemma \ref{L:simplex1} to each $\cD^{s,1}_j$ gives a vertical line neighborhood of radius 
$$
r_j:=\frac{\eta}{R^j}\cdot\sqrt{\frac{R^{\frac{j+1}{2}}}{Q}},
$$
which covers all dangerous sets in $\cD^{s,1}_j$. In view of \eqref{E:D1par}, we conclude that the total potential of affine line neighborhoods to cover all dangerous sets in $\cD^{s,1}$ is \begin{equation*}
    \sum_{j>\frac{2\log{Q_-}}{\log{R}}-1}r_j^{\gamma}\leq \frac{(\eta RQ^{-1/2}Q_-^{-3/2})^{\gamma}}{1-R^{-3\gamma/4}}
    \leq \frac{(\theta R^{-4}(QQ_-)^{-1/2})^{\gamma}}{1-R^{-3\gamma/4}}.
\end{equation*}
By \eqref{E:QQ-condition} and \eqref{E:Rcondition}, we see that the above right-hand side is not exceeding $\frac{1}{3}(\beta\rho)^{\gamma}$.

The resulting family is finite.  Indeed, by \eqref{E:defD}, the indices in \eqref{E:D0par} and \eqref{E:D1par} satisfy $\rho R^j<\theta R$, which gives an upper bound for $j$; the lower bounds above and the finite range of $k$ then leave only finitely many subcollections.

Finally, putting all these affine line neighborhoods for $\cD^s=\cD^{s,0}\cup\cD^{s,1}\cup\cD^{s,2}$ together gives the desired conclusion.
\end{proof}

\subsection{Total potential of affine line neighborhoods with certain height} Given $R\leq Q_-<Q$ and $B'=B(z',\rho')$, we consider the following family of dangerous sets \begin{equation}\label{E:defF}
\cD^c(Q_-,Q;B'):=\left\{
\Delta\in\widetilde{\cD}\;\middle|\;
\begin{aligned}
&Q_-<q(\Delta)\leq Q,\quad c(\Delta)=c(P,Q,\eta),\\
&\theta R^{-j(\Delta)}\geq \rho'>\beta\cdot\theta R^{-j(\Delta)},\quad \Delta\cap B'\neq \varnothing
\end{aligned}
\right\}.
\end{equation}
The second potential estimate concerns those affine line neighborhoods covering all dangerous sets in $\cD^c(Q_-,Q;B')$.

\begin{proposition}\label{P:potential2}
    Let $R\leq Q_-<Q$, $B'=B(z',\rho')$, and write $\cD^c=\cD^c(Q_-,Q;B')$. For $j\geq 0$, consider the subfamily \begin{equation}
        \cD^c_j:=\{\Delta\in\cD^c:j(\Delta)=j\},
    \end{equation}
and suppose that \begin{equation}\label{E:jcondition}
    4R^2(\eta+\rho'R^j)<1.
\end{equation}
Then there exists a finite family $\cL^c_j=\cL^c_j(Q_-,Q;B')$ of affine line neighborhoods such that $$
\bigcup_{\Delta\in \cD^c_j}\Delta\subseteq\bigcup_{L^{(r)}\in \cL^c_j}L^{(r)},
\qquad
\text{and} \qquad
\varphi(\cL^c_j)\leq (\beta\rho')^{\gamma}.
$$
The family can be chosen so that every $L^{(r)}$ in it has rational
central line and
\begin{equation}\label{E:potential2-control}
r \xi(L)|A\cdot B|<\frac32\eta R^4,
\end{equation}
where $(A,B,C)$ is the primitive triple defining $L$.
\end{proposition}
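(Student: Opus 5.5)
Split $\cD^c_j$ by type, $\cD^c_j=\cD^{c,0}_j\sqcup\cD^{c,1}_j\sqcup\cD^{c,2}_j$, and build at most four line neighborhoods: two for Type 0 (from Lemma \ref{L:simplex0}) and one each for Types 1 and 2 (from Lemma \ref{L:simplex1} and its symmetric version). Then check that each radius is at most a fixed small multiple of $\beta\rho'$, so four of them fit in the budget $(\beta\rho')^\gamma$. Unlike Proposition \ref{P:potential1}, there is no sum over $j$: the height level is frozen, and $\rho'$ is comparable to $\theta R^{-j}$ by the definition \eqref{E:defF}.

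\textbf{Type 0.} For Type 0 sets, the denominators $q(\Delta)$ range over $(Q_-,Q]$, which may span many $R$-adic blocks. So I will not apply Lemma \ref{L:simplex0} with denominators pinned to one block. Instead I fix $H=R^j$, so that $H\le H(\Delta)<RH$, and bound $c(\Delta)=\eta\sqrt{q/Q}\le\eta=:c_*$. Condition \eqref{E:condition0} for $B'$ is exactly \eqref{E:jcondition}. The obstacle is the hypothesis $R^{-1}Q<q_i\le Q$ in Lemma \ref{L:simplex0}. Since $q\le H(\Delta)<R^{j+1}$ and, by \eqref{E:Hrange}, $H(\Delta)\le q^{2}$, the denominators lie in $[R^{j/2},R^{j+1})$. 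That is roughly $j/2$ blocks, not one. I would revisit the proof of Lemma \ref{L:simplex0}: the ratio $q_i/q_*$ only enters through \eqref{E:calA}, \eqref{E:calB}, \eqref{E:calAB}, and there it can be replaced by height comparisons. For example, $|B_*|\le H(P_*)/q_*<RH/q_*$, and $q_i|B_*|/q_*^{2-s}$ should be controlled via $q_i\le H(P_i)<RH$ together with $q_*^{1-s}\ge|B_*|$. If that fails, I would fall back on splitting into blocks $k$ as in Proposition \ref{P:potential1}. The radius from block $k$ is $\frac32\eta R^{3-k/2}R^{-j}$ (using $c\le\eta R^{-k/2}$), and the geometric series in $k$ sums to $O(\eta R^{3-j})$, so the total potential is still at most a constant times $(\eta R^3 R^{-j})^\gamma$. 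Since $R^{-j}\le\rho'/(\beta\theta)=\rho'/(\beta\eta R^4)$, this gives a potential of order $(\rho'/(\beta R))^\gamma$. That is at most $\frac13(\beta\rho')^\gamma$ by \eqref{E:Rcondition}, because $R^{-\gamma}$ beats $\beta^{2\gamma}$ with the stated constants.

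\textbf{Types 1 and 2.} For Type 1 sets, $j(\Delta)=j$ forces $q^2\in[R^j,R^{j+1})$, and $c\le\eta$. Condition \eqref{E:condition1} follows from \eqref{E:jcondition}. Lemma \ref{L:simplex1} then gives one vertical neighborhood of radius $\eta R^{-j}\le\rho'/(\beta R^4)$, whose potential is again $\le\frac13(\beta\rho')^\gamma$. Type 2 is handled symmetrically with a horizontal line. Finiteness of the family is immediate.

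\textbf{Control \eqref{E:potential2-control}.} For Type 0, the remark after Lemma \ref{L:simplex0} gives $|A_*B_*|\le q_*$ and $q_*\xi_*<R^{j+1}$. Hence $r\,\xi_*|A_*B_*|<\frac32\eta R^{3}R^{-j}R^{j+1}=\frac32\eta R^4$; with the block splitting, the extra factor $R^{-k/2}$ only helps. For vertical or horizontal lines, $A\cdot B=0$, so the bound holds trivially.

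\textbf{Main obstacle.} The only delicate point is the Type 0 denominator range. I expect the block-splitting fallback to work, since the per-block radius already carries the decay factor $R^{-k/2}$.
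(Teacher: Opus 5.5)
Your block-splitting fallback is exactly the paper's argument: blocks $R^{-(k+1)}Q<q\le R^{-k}Q$, Lemma~\ref{L:simplex0} applied to each block with $c_*=\eta R^{-k/2}$ and $H=R^j$, a geometric sum of the $\gamma$-th powers of the radii $\frac32\eta R^{3-k/2-j}$ compared with $\frac13(\beta^2\theta R^{-j})^\gamma<\frac13(\beta\rho')^\gamma$ via \eqref{E:Rcondition}, and one vertical or horizontal neighborhood for Types 1 and 2; so the proposal is correct along that route, and your use of $c\le\eta$ for Types 1 and 2 is a harmless simplification of the paper's $c_{*,j}$. You should, however, treat block splitting as the proof rather than a fallback and drop the ``at most four neighborhoods'' plan: in the proof of Lemma~\ref{L:simplex0} the $\rho'$-term of $|N_i|$ is only bounded by about $4R\theta\,q_i/q_*$, and within one height level $q_i/q_*$ can be as large as roughly $R^{j/2+1}$, so the single-block hypothesis $R^{-1}Q<q_i\le Q$ is genuinely needed and cannot be replaced by height comparisons.
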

\begin{proof}
Fix $j\geq 0$. For $i\in \{0,1,2\}$, write $$
\cD_j^{c,i}:=\{\text{Type }i\text{-dangerous sets in }\cD^c_j\}.$$
Let us first consider $\cD^{c,0}_j$. For any $k\in\NN_0$ with $k<\frac{\log(Q/Q_-)}{\log{R}},$
we define $$
\cD^{c,0}_{k,j}:=\left\{
\Delta\in \cD^{c,0}_j: R^{-(k+1)}Q<q(\Delta)\leq R^{-k}Q
\right\}.
$$
It follows that \begin{equation}\label{E:F0par}
    \cD^{c,0}_j=\bigsqcup_{k<\frac{\log(Q/Q_-)}{\log{R}}}\cD^{c,0}_{k,j}.
\end{equation}
Note that for any $\Delta\in \cD_{k,j}^{c,0}$, $$
4R^2(c(\Delta)+\rho'R^{j(\Delta)})\leq 4R^2(\eta R^{-k/2}+\rho'R^j)\overset{\eqref{E:jcondition}}{<}1.
$$
So applying Lemma \ref{L:simplex0} to each $\cD^{c,0}_{k,j}$ gives two affine line neighborhoods of radius $$
r_{k,j}=\frac{3\eta R^{3-k/2}}{2R^j},
$$
whose union covers all dangerous sets in $\cD^{c,0}_{k,j}$. For each nonempty half-family, Lemma \ref{L:simplex0} supplies an attached line $L_*:A_*x+B_*y+C_*=0$. With its label $q_*$ and $\xi_*$, \eqref{E:simplex-label} gives $|A_*\cdot B_*|\leq q_*$ and
$q_*\xi_*<R^{j+1}$.  Therefore
\[
r_{k,j}\xi_*|A_*\cdot B_*|\leq r_{k,j}\xi_*q_*
<\frac32\eta R^{4-k/2}
\leq\frac32\eta R^4.
\]
 In view of \eqref{E:F0par}, we conclude that the total potential of affine line neighborhoods to cover all dangerous sets in $\cD^{c,0}_j$ is \begin{equation*}
\sum_{k<\frac{\log(Q/Q_-)}{\log{R}}}2r_{k,j}^{\gamma}<\frac{2^{1-\gamma}(3\eta R^{3-j})^{\gamma}}{1-R^{-\gamma/2}}\overset{\eqref{E:Rcondition}}{<}\frac{1}{3}(\beta^2\theta R^{-j})^{\gamma}\overset{\eqref{E:defF}}{<}\frac{1}{3}(\beta\rho')^{\gamma}.
\end{equation*}

    Now we consider the $\cD^{c,1}_j$ case. The $\cD^{c,2}_j$ case is analogous. Put
    \[
    c_{*,j}:=\eta\min\left\{1,
    \sqrt{\frac{R^{(j+1)/2}}{Q}}\right\}.
    \]
    As before, every $\Delta\in\cD^{c,1}_j$ satisfies
    $c(\Delta)\leq c_{*,j}$. Thus $$
    2R(c_{*,j}+\rho'R^{j(\Delta)})\leq 2R(\eta+\rho' R^j)\overset{\eqref{E:jcondition}}{<}1.
    $$
    So applying Lemma \ref{L:simplex1} to each $\cD^{c,1}_j$ gives an vertical line neighborhood of radius 
    $$
    r_j=\frac{\eta}{R^j}\cdot\sqrt{\frac{R^{\frac{j+1}{2}}}{Q}},
$$
which covers all dangerous sets in $\cD^{c,1}_j$. We conclude that its total potential is \begin{equation*}
    r_j^{\gamma}\leq (\eta R^{1/4-j})^{\gamma}\overset{\eqref{E:Rcondition}}{<}\frac{1}{3}(\beta^2\theta R^{-j})^{\gamma}\overset{\eqref{E:defF}}{<}\frac{1}{3}(\beta\rho')^{\gamma}.
\end{equation*}
    
    Finally, putting all these affine line neighborhoods for $\cD^c_j=\cD_j^{c,0}\cup\cD_j^{c,1}\cup\cD_j^{c,2}$ together gives the desired conclusion.
\end{proof}

\section{The Winning Strategy and Proof of Theorem \ref{T:main}} \label{S:win} In this section, we play the hyperplane potential game on $\RR^2$ (see Section \ref{SS:HPW}) with the target set $E$, and show that Alice has a winning strategy. This will complete the proof of Theorem \ref{T:main}.

\subsection{The choice of $\eta$ and Alice's initial move} \label{SS:initial}
Let $\beta\in (0,1)$ and $\gamma>0$ be parameters, and let $\varphi(\cdot)$ denote the total $\gamma$-potential function as in \eqref{E:potential}. We choose an integer $R>\max\{4,\beta^{-1}\}$ sufficiently large so that \eqref{E:Rcondition} holds. In the following, we play the $(\beta,\gamma)$-hyperplane potential game on $\RR^2$ with the target set $E$. 

Let Bob's initial ball be $B_0=B(z_0,\rho_0)$. We choose $\eta>0$ sufficiently small so that \eqref{E:eta} holds and \begin{equation}\label{E:eta1}
    \sum_{q=1}^R\left(2\left(\frac{\eta}{\sqrt{R}}+\rho_0q\right)+1\right)\cdot\left(\frac{\eta}{q\sqrt{R}}\right)^{\gamma}\leq \frac{1}{2}(\beta\rho_0)^{\gamma}.
\end{equation}
Write $Q_0=R$. Our goal is to design Alice's initial collection $\cL_0$ so that \begin{equation}\label{E:0danger}
    \left\{
    (x,y)\in B_0: Q_0\cdot \min_{1\leq q\leq Q_0}\la qx\ra\la qy\ra<\eta^2
    \right\}\subseteq \bigcup_{L^{(r)}\in \cL_0}L^{(r)},
\end{equation}
and
\begin{equation}\label{E:0potential}
    \varphi(\cL_0)\leq (\beta\rho_0)^{\gamma}.
\end{equation}

Here is Alice's initial move. For each $P=(\frac{p}{q},\frac{r}{q})\in \cP$ with $q\leq Q_0$, we write \begin{equation*}
    L^1_P:=\left\{(x,y)\in\RR^2:x=\frac{p}{q}\right\};\qquad
    L^2_P:=\left\{(x,y)\in\RR^2:y=\frac{r}{q}\right\}.
\end{equation*}
Then we define Alice's initial collection $\cL_0$ to be \begin{equation}
    \bigcup_{i=1,2}\left\{
    (L^i_P)^{(\eta/(q\sqrt{Q_0}))}: P=\left(\frac{p}{q},\frac{r}{q}\right)\in\cP,q\leq Q_0, (L^i_P)^{(\eta/(q\sqrt{Q_0}))}\cap B_0\neq \varnothing
    \right\}.
\end{equation}
This collection is finite: $q\leq Q_0$, and the intersection
condition bounds $p$ for the distinct vertical lines and $r$ for the distinct horizontal lines.
We verify \eqref{E:0danger}: let $(x,y)\in B_0$ satisfy $$
Q_0\la qx\ra\la qy\ra<\eta^2\quad\text{ for some }1\leq q\leq Q_0.
$$
It follows that either $\sqrt{Q_0}\la qx\ra<\eta$, or $\sqrt{Q_0}\la qy\ra<\eta$. So we may choose $(p,r)\in\ZZ^2$ with $\gcd(p,r,q)=1$ such that for $P=(\frac{p}{q},\frac{r}{q})$, $$
(x,y)\in (L^1_P)^{(\eta/(q\sqrt{Q_0}))}\cup (L^2_P)^{(\eta/(q\sqrt{Q_0}))}.
$$
Since $(x,y)\in B_0$, the condition $(x,y)\in (L^i_P)^{(\eta/(q\sqrt{Q_0}))}$ automatically implies that $(L^i_P)^{(\eta/(q\sqrt{Q_0}))}\cap B_0\neq\varnothing$. This verifies \eqref{E:0danger}.

Now we calculate the total potential of the collection $\cL_0$. Note that every vertical line neighborhood $(L^1_P)^{(\eta/(q\sqrt{Q_0}))}$ in $\cL_0$ satisfies $$
1\leq q\leq Q_0\quad \text{and}\quad \left|\frac{p}{q}-z_{0,1}\right|\leq \frac{\eta}{q\sqrt{Q_0}}+\rho_0.
$$
So the total potential of vertical line neighborhoods in $\cL_0$ is not exceeding $$
\sum_{q=1}^{Q_0}\left(2\left(\frac{\eta}{\sqrt{Q_0}}+\rho_0q\right)+1\right)\cdot\left(\frac{\eta}{q\sqrt{Q_0}}\right)^{\gamma}
\overset{\eqref{E:eta1}}{\leq} \frac{1}{2}(\beta\rho_0)^{\gamma}.
$$
The same estimate holds for the total potential of horizontal line neighborhoods in $\cL_0$. Combining these two estimates gives  \eqref{E:0potential}.

\subsection{Definition of stages and Alice's general moves}\label{SS:general} In this section, we divide all turns of the game into stages, and design Alice's strategy of choosing affine line neighborhoods at each stage. This amounts to giving a partition of $\NN_0$: $$
\NN_0=\bigsqcup_{m\geq 0}\{N_{m-1}+1,\cdots,N_m\},\quad \text{ where }N_{-1}=-1<N_0<N_1<\cdots \text{ are integers}.
$$
The $m$-th stage refers to the turns in $\{N_{m-1}+1,\cdots,N_m\}$.

We set the $0$-th stage to be $\{0\}$, that is, $N_0=0$. At the $0$-th stage, write $Q_0=R$ and let Alice choose $\cL_0$ as her collection of affine line neighborhoods. Now let $m\geq 1$ and suppose that the $(m-1)$-th stage has been completed with the parameters $N_{m-1}$ and $Q_{m-1}$. 

At the $m$-th stage, our goal is to set two parameters $N_m>N_{m-1}$ and $Q_m>Q_{m-1}$, and to design Alice's choice of collections 
\begin{equation}\label{E:mcollection}
\bigcup_{N_{m-1}<n\leq N_m}\cL_n,
\end{equation}
so that the union of all affine line neighborhoods in these collections covers
\begin{equation}\label{E:mdanger}
 B_{N_m}\cap \bigcup_{\substack{P\in\cP,\\ Q_{m-1}<q\leq Q_m}}
 \Bigl(\bigcup_{s\in [0,1]}\Delta_{c(P,Q_m,\eta)}(P;s)\cup \Delta^1_{c(P,Q_m,\eta)}(P)\cup \Delta^2_{c(P,Q_m,\eta)}(P)\Bigr),
\end{equation}
and \begin{equation}\label{E:npotential}
    \varphi(\cL_n)\leq (\beta\rho_n)^{\gamma},\quad n\in \{N_{m-1}+1,\cdots,N_m\}.
\end{equation}
Here and hereafter, $\rho_n$ denotes the radius of Bob's ball at the $n$-th turn.

Here are Alice's general moves at the $m$-th stage. Given Bob's ball $B_{N_{m-1}+1}=B(z_{N_{m-1}+1},\rho_{N_{m-1}+1})$, we choose $Q_m>Q_{m-1}$ sufficiently large so that $$
Q_m\geq \frac{\theta^2R^4}{\rho_{N_{m-1}+1}^2Q_{m-1}}
,\quad \text{where}\quad \theta=\eta R^4.
$$

In the $(N_{m-1}+1)$-th turn, Alice deals with the dangerous family $$\cD_{N_{m-1}+1}:=\cD^s(Q_{m-1},Q_m;B_{N_{m-1}+1}).$$ Proposition \ref{P:potential1} gives a finite family $$\cL_{N_{m-1}+1}:=\cL^s(Q_{m-1},Q_m;B_{N_{m-1}+1})$$ of affine line neighborhoods so that $$
\bigcup_{\Delta\in \cD_{N_{m-1}+1}}\Delta\subseteq \bigcup_{L^{(r)}\in \cL_{N_{m-1}+1}}L^{(r)},
\quad 
\text{and} \quad
\varphi(\cL_{N_{m-1}+1})<(\beta\rho_{N_{m-1}+1})^{\gamma}.
$$

In other turns of the $m$-th stage, Alice deals with the remaining dangerous family
\begin{equation}\label{E:defRm}
\cR_m:=\left\{
\Delta\in\widetilde{\cD}\;\middle|\;
\begin{aligned}
&Q_{m-1}<q(\Delta)\leq Q_m,\quad c(\Delta)=c(P,Q_m,\eta),\\
&\rho_{N_{m-1}+1}\cdot R^{j(\Delta)}\geq \theta R,\quad \Delta\cap B_{N_m}\neq \varnothing
\end{aligned}
\right\}.
\end{equation}
Here $N_m>N_{m-1}$ is to be determined below. Write \begin{equation}\label{E:triggerj}
J_m:=\{j\in\NN_0:R^j\leq Q_m^2\text{ and }
\rho_{N_{m-1}+1}\cdot R^j\geq \theta R\}.
\end{equation}
For any $j\in J_m$, we define its trigger time \begin{equation*}
    \tau(j):=\inf\{n>N_{m-1}+1:\rho_n\leq \theta R^{-j}\}.
\end{equation*}
Here the convention $\inf\varnothing=+\infty$ is adopted. It follows that when $\tau(j)<+\infty$, \begin{equation}\label{E:trigger}
\rho_{\tau(j)}\leq \theta R^{-j}<\rho_{\tau(j)-1}\leq \beta^{-1}\rho_{\tau(j)}.
\end{equation}
We make two more simple observations: \begin{enumerate}
    \item[($\tau1$)] If $\rho_n\to 0$ as $n\to +\infty$, then for any $j\in J_m$, $\tau(j)<+\infty$;
    \item[($\tau2$)] If $j\neq j'\in J_m$ and $\tau(j),\tau(j')<+\infty$, then $\tau(j)\neq \tau(j')$.
\end{enumerate}

We set $$
N_m:=\max\{N_{m-1}+1,\tau(j) \text{ where }j\in J_m\}.
$$
Then $N_m<+\infty$ as long as all $\tau(j)<+\infty$.
It is clear that for Bob's balls $B_n$ where $N_{m-1}+1<n\leq N_m$, one has \begin{equation}
    \cR_m\subseteq \bigcup_{j\in J_m}\cD^c_j(Q_{m-1},Q_m;B_{\tau(j)}).
\end{equation}

If $\tau(j)<+\infty$, then in the $\tau(j)$-th turn, Alice deals with the dangerous family $$
\cD_{\tau(j)}:=\cD^c_j(Q_{m-1},Q_m;B_{\tau(j)}).
$$
Proposition \ref{P:potential2} gives a finite family $$
\cL_{\tau(j)}:=\cL^c_j(Q_{m-1},Q_m;B_{\tau(j)})$$
of affine line neighborhoods so that $$
\bigcup_{\Delta\in \cD_{\tau(j)}}\Delta\subseteq \bigcup_{L^{(r)}\in \cL_{\tau(j)}}L^{(r)},
\quad 
\text{and} \quad
\varphi(\cL_{\tau(j)})<(\beta\rho_{\tau(j)})^{\gamma}.
$$

Finally, in the $n$-th turn where 
$$n\in \{N_{m-1}+2,\cdots,N_m\}\setminus\{\tau(j):j\in J_m\},$$ Alice chooses $\cL_n=\varnothing$. Therefore, we conclude that as soon as $\rho_n\to 0$ as $n\to +\infty$, Alice's collections \eqref{E:mcollection}
satisfy the requirements \eqref{E:mdanger} and \eqref{E:npotential}. 

\subsection{Proof of Theorem \ref{T:main}} 
By Lemma \ref{L:HAWHPW}, it suffices to show that the set $E$ is hyperplane potential winning. Let the $(\beta,\gamma)$-hyperplane potential game go as in Sections \ref{SS:initial} and \ref{SS:general}. If Alice doesn't win by default, then one has $\rho_n\to 0$ as $n\to +\infty$, and $$
\bigcap_{n\geq 0}B_n\cap \bigcup_{n\geq 0}\bigcup_{L^{(r)}\in \cL_n}L^{(r)}=\varnothing.
$$
It follows from \eqref{E:0danger} and \eqref{E:mdanger} that the outcome of $\bigcap_{n\geq 0}B_n$ avoids \eqref{E:Eeta0} and \eqref{E:Eeta1}. By Corollary \ref{C:Eeta}, we see that this outcome must lie in $E$. So Alice wins.

\section{Proof of Theorem \ref{T:1+epsilon} and Related Results}\label{S:further}
In this section, we deduce Theorem \ref{T:1+epsilon} from our proof of Theorem \ref{T:main}, using a common trick in the study of winning sets. 
We start with a simple observation that is claimed in Section \ref{SS:refined}. 

\begin{lemma}\label{L:LCULC}
    Let $\bQ=(Q_m)_{m\geq 0}$ be a strictly increasing sequence of positive real numbers tending to infinity. Then $E'\subseteq E_{\bQ}\subseteq E$. Moreover, when $\bQ$ has bounded ratios, one has $E'=E_{\bQ}$.
\end{lemma}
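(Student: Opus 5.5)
Write $F(Q):=Q\cdot\min_{1\leq q\leq Q}\la qx\ra\la qy\ra$ for a fixed pair $(x,y)$. The whole lemma rests on two elementary facts about $F$. First, the map $Q\mapsto \min_{1\leq q\leq Q}\la qx\ra\la qy\ra$ is nonincreasing. Second, $F(Q)\geq Q\cdot\min_{q\leq Q}q^{-1}\cdot q\la qx\ra\la qy\ra\geq \min_{q\leq Q} q\la qx\ra\la qy\ra$, since $Q/q\geq 1$ for every $q\leq Q$.

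\emph{$E'\subseteq E_{\bQ}$.} Take $(x,y)\in E'$. There are $\delta>0$ and $q_0$ such that $q\la qx\ra\la qy\ra\geq\delta$ for all $q\geq q_0$. For the finitely many $q<q_0$ we need $\la qx\ra\la qy\ra>0$. This holds because if some $\la qx\ra\la qy\ra=0$, then multiples $kq$ also give $0$, which contradicts the liminf being positive. Hence $\delta':=\min\{\delta,\min_{q<q_0}q\la qx\ra\la qy\ra\}>0$, and $q\la qx\ra\la qy\ra\geq\delta'$ for every $q\geq 1$. By the second fact, $F(Q)\geq\delta'$ for every $Q\geq 1$. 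In particular $F(Q_m)\geq\delta'$ for every $m$, so the liminf over $m$ is positive.

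\emph{$E_{\bQ}\subseteq E$.} This is immediate. If $\liminf_m F(Q_m)>0$, then $F(Q_m)\geq\delta>0$ along $Q_m\to\infty$, so $\limsup_{Q\to\infty}F(Q)\geq\delta>0$.

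\emph{Bounded ratios give $E_{\bQ}\subseteq E'$.} Suppose $Q_{m+1}\leq KQ_m$ for all $m$, and take $(x,y)\in E_{\bQ}$, so that $F(Q_m)\geq\delta$ for all $m\geq m_0$. Given a large integer $q$, let $m$ be the least index with $Q_m\geq q$. For $q>Q_{m_0}$ we have $m>m_0$ and $Q_{m-1}<q$, hence $Q_m\leq KQ_{m-1}<Kq$. Since $q\leq Q_m$, the minimum defining $F(Q_m)$ ranges over $q$, so
\[
\delta\leq F(Q_m)\leq Q_m\la qx\ra\la qy\ra< Kq\la qx\ra\la qy\ra.
\]
Therefore $\liminf_q q\la qx\ra\la qy\ra\geq\delta/K>0$, and $(x,y)\in E'$.

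The only point needing care is the zero-product case in the first inclusion, which is handled as above. The rest is routine.
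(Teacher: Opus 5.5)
Your proof is correct and follows essentially the same route as the paper: $E'\subseteq E_{\bQ}$ comes from $Q\min_{q\le Q}\la qx\ra\la qy\ra\ge\min_{q\le Q}q\la qx\ra\la qy\ra$ once vanishing products are ruled out (the paper phrases this as $x,y\notin\QQ$), and the bounded-ratio reverse inclusion comes from sandwiching $Q_{m-1}<q\le Q_m$. The only difference is cosmetic: you take the least index $m$ with $Q_m\ge q$ for $q>Q_{m_0}$, whereas the paper uses $m(q)=\min\{m\ge m_0: q\le Q_m\}$ for $q>Q_{m_0-1}$.
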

\begin{proof}
First, it is straightforward to see that $E_{\bQ}\subseteq E$.
    We next prove that $E'\subseteq E_{\bQ}$. Let $(x,y)\in E'$, that is, there exists some $c>0$ and $N_0>0$ such that for any $q>N_0$, $$
    q\la qx\ra\la qy\ra>c.
    $$
    In particular, $x,y\notin\QQ$. We write $c_0:=\min_{1\leq q\leq N_0}q\la qx\ra\la qy\ra>0$. Then for any $m\geq 0$ with $Q_m>N_0$, we have $$
    Q_m\cdot \min_{1\leq q\leq Q_m}\la qx\ra\la qy\ra
    \geq \min_{1\leq q\leq Q_m}q\la qx\ra\la qy\ra
    \geq\min\{c_0,c\}>0.
    $$
    This means that $\liminf_{m\to +\infty}Q_m\cdot \min_{1\leq q\leq Q_m}\la qx\ra\la qy\ra>0$, that is, $(x,y)\in E_{\bQ}$. 

    Now we suppose that $\bQ$ has bounded ratios, that is, there exists some $R>0$ such that $\sup_{m\geq 0}\frac{Q_{m+1}}{Q_m}\leq R$. For any $(x,y)\in E_{\bQ}$, there exists some $\eta>0$ and $m_0\geq 1$ such that $$
Q_m\cdot \min_{1\leq q\leq Q_m}\la qx\ra\la qy\ra\geq \eta,\quad \forall m\geq m_0.
$$
Then for each $q>Q_{m_0-1}$, we define $$
m(q):=\min\{m\geq m_0:q\leq Q_m\}.
$$
It follows that $R^{-1}Q_{m(q)}\leq Q_{m(q)-1}<q\leq Q_{m(q)}$. Therefore, we conclude that $$
    q\la qx\ra\la qy\ra>R^{-1}Q_{m(q)}\cdot \min_{1\leq n\leq Q_{m(q)}}\la nx\ra\la ny\ra\geq R^{-1}\eta.
$$
This means that $\liminf_{q\to +\infty}q\la qx\ra\la qy\ra>0$, that is, $(x,y)\in E'$.
\end{proof}

Next, we focus on Problem \ref{P:growth}, which aims to study the Hausdorff dimension of $E_{\bQ}$ when a growth rate of the sequence $\bQ$ is given. By carefully examining the proof of Theorem \ref{T:main}, we see that there exists a specific sequence $\bQ=(Q_m)_{m\geq 0}$ satisfying \begin{equation}
    Q_{m+1}>R^2Q_m^3,\quad m\geq 0
\end{equation}
so that the resulting set $E_{\bQ}$ is $(\beta,\gamma)$-hyperplane potential winning on $\RR^2$. Here the constant $R>0$ explicitly depends on the parameters $\beta\in (0,1)$ and $\gamma>0$. Now we formally generalize this observation.




\begin{lemma}\label{L:cubic}
    Let $\bQ=(Q_m)_{m\geq 0}$ be any strictly increasing sequence of positive real numbers with \begin{equation}\label{E:cubic}
        \lim_{m\to +\infty}\frac{Q_{m+1}}{Q_m^3}=+\infty.
    \end{equation}
    Then the set $E_{\bQ}$ is hyperplane absolute winning on $\RR^2$.
    
    More precisely, given $\beta\in(0,1)$, $\gamma>0$, an initial
    ball $B_0$, and $\varepsilon>0$, Alice has a
    $(\beta,\gamma)$-hyperplane-potential strategy with the following
    properties.  At turn $n$ she deletes a finite family
    \[
       \cL_n=\{L_{n,i}^{(r_{n,i})}:i\in I_n\}.
    \]
    Each $L_{n,i}$ has a primitive integral equation
    \[
       A_{n,i}x+B_{n,i}y+C_{n,i}=0,\qquad
       \xi_{n,i}:=\max\{|A_{n,i}|,|B_{n,i}|\},
    \]
    satisfying
    \begin{equation}\label{E:structured-control}
       r_{n,i}\xi_{n,i}|A_{n,i}B_{n,i}|<\varepsilon.
    \end{equation}
    If the radii of Bob's balls tend to zero and the outcome avoids every
    neighborhood $L_{n,i}^{(r_{n,i})}$, then the outcome belongs to
    $E_{\bQ}$.
\end{lemma}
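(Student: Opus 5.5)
The plan is to rerun the strategy of Section~\ref{S:win} essentially verbatim. The only change is that the scale parameters $Q_m$ are no longer chosen by Alice during the game but are the given terms of $\bQ$. Alice only decides \emph{when} each stage begins. We fix $\beta,\gamma$ and choose $R$ as in \eqref{E:Rcondition}. Bob then plays $B_0$, and we are given $\varepsilon$. We take $\eta$ so small that \eqref{E:eta} holds, $\frac32\eta R^4<\varepsilon$, and a finite initial covering (described below) has potential at most $(\beta\rho_0)^\gamma$. With these choices, every neighborhood produced by Propositions~\ref{P:potential1} and~\ref{P:potential2} satisfies \eqref{E:structured-control} automatically, by \eqref{E:potential1-control} and \eqref{E:potential2-control}. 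The initial neighborhoods are vertical or horizontal lines, so $A\cdot B=0$ and \eqref{E:structured-control} is trivial for them.

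The key step is to check the hypothesis \eqref{E:QQ-condition} of Proposition~\ref{P:potential1} at the first turn of each stage, now that $Q_m$ is fixed. Stage $m$ (for $m>m_0$) starts at turn $N_{m-1}+1$, immediately after stage $m-1$ has finished. It uses $\cD^s(Q_{m-1},Q_m;B_{N_{m-1}+1})$ at that turn, and then $\cD^c_j(Q_{m-1},Q_m;B_{\tau(j)})$ at the trigger times $\tau(j)$ for $j\in J_m$, exactly as in Section~\ref{SS:general}. By construction $N_{m-1}$ is either $N_{m-2}+1$ or a trigger time $\tau(j)$ with $R^j\le Q_{m-1}^2$. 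In the second case \eqref{E:trigger} gives
\[
\rho_{N_{m-1}+1}\ge\beta\rho_{N_{m-1}}\ge\beta^2\theta R^{-j}\ge\beta^2\theta Q_{m-1}^{-2}.
\]
In the first case the stage was triggered by nothing, so $\rho$ is even larger and a similar bound holds. Hence \eqref{E:QQ-condition} reduces to
\[
Q_m\ge\frac{\theta^2R^4}{\rho_{N_{m-1}+1}^2Q_{m-1}},
\]
which follows from $Q_mQ_{m-1}\ge R^4\beta^{-4}Q_{m-1}^4$, that is, $Q_m\ge R^4\beta^{-4}Q_{m-1}^3$. Crucially, $\theta$ cancels, so this holds for all $m>m_0$ with $m_0$ depending only on $\beta,\gamma,R$ by \eqref{E:cubic}, and not on $\eta$. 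This is what lets us fix $m_0$ before choosing $\eta$. Condition \eqref{E:jcondition} at each trigger time holds as before, since $\rho_{\tau(j)}R^j\le\theta$ and $4R^2(\eta+\theta)<1$ by \eqref{E:eta}. Stages occupy disjoint blocks of turns and the trigger times inside a stage are distinct by ($\tau2$). Consequently each turn carries the potential of only one proposition, and \eqref{E:npotential} holds.

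The finitely many initial scales $m\le m_0$ are handled by a generalized initial move at turn $0$, as in Section~\ref{SS:initial}. The set $\{(x,y)\in B_0: Q_m\min_{q\le Q_m}\la qx\ra\la qy\ra<\eta^2\}$ lies, for each $m\le m_0$, in the union of vertical and horizontal neighborhoods of radius $\eta/(q\sqrt{Q_m})$ around $x=p/q$ or $y=r/q$ with $q\le Q_m$. There are finitely many such neighborhoods meeting $B_0$, and their total potential tends to $0$ as $\eta\to0$. We therefore impose an analogue of \eqref{E:eta1} summed over $m\le m_0$ (with $Q_0$ there replaced by $Q_m$), so that this covering has potential at most $(\beta\rho_0)^\gamma$. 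Stage $m_0+1$ then starts at turn $1$, where \eqref{E:QQ-condition} needs $Q_{m_0+1}Q_{m_0}\ge\theta^2R^4\rho_1^{-2}$. This is one more largeness condition, met by shrinking $\eta$ (hence $\theta$), or by enlarging $m_0$.

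The conclusion then follows as in the proof of Theorem~\ref{T:main}. If $\rho_n\to0$, all trigger times are finite by ($\tau1$), so every stage ends. If in addition the outcome avoids all deleted neighborhoods, then it avoids \eqref{E:Eeta1} for $m>m_0$ and the scale-$m$ sets for $m\le m_0$. Hence $Q_m\min_{q\le Q_m}\la qx\ra\la qy\ra\ge\eta^2$ for every $m$, and the outcome lies in $E_{\bQ}$. HAW then follows from Lemma~\ref{L:HAWHPW}. The main obstacle is the bookkeeping showing that the radius at the start of stage $m$ is bounded below by roughly $\beta^2\theta Q_{m-1}^{-2}$, so that the cubic growth \eqref{E:cubic} exactly compensates. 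The degenerate case where $J_{m-1}$ is empty or no trigger occurs also needs care, and there I would simply use $\rho_{N_{m-1}+1}\ge\beta\rho_{N_{m-2}+1}$ and induct.
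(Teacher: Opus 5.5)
This is essentially the paper's proof. Both use the same stage-by-stage induction: the radius after stage $m$ is at least $\beta^2\theta Q_m^{-2}$, so $\theta$ cancels and the cubic growth yields $(I_{m+1})$. The only difference is the early scales: the paper truncates harmlessly, replacing $\bQ$ by a tail with $Q_0=R$ and $Q_{m+1}\geq R^4\beta^{-4}Q_m^3$ for all $m$ (which leaves $E_{\bQ}$ unchanged because of the $\liminf$), whereas you cover the scales $m\le m_0$ in an enlarged turn-$0$ move, which also works.

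One small correction: your remark that in the no-trigger case $\rho$ is ``even larger'' is backwards, since $J_{m-1}=\varnothing$ would force $\rho_{N_{m-2}+1}$ to be small. In fact $(I_{m-1})$ gives $\rho_{N_{m-2}+1}Q_{m-1}^2\geq\theta R^2$, so $\lfloor 2\log Q_{m-1}/\log R\rfloor\in J_{m-1}$ and that case never occurs; this is the implicit step behind the paper's $N_m=\tau(j_m)$. Your fallback induction via $\rho_{N_{m-1}+1}\geq\beta\rho_{N_{m-2}+1}$ would handle it anyway.
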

\begin{proof}
    Fix $\beta,\gamma,B_0$, and $\varepsilon$ as in the statement.
    Choose an integer $R>\max\{4,\beta^{-1}\}$ sufficiently large so that \eqref{E:Rcondition} holds. By \eqref{E:cubic}, for all sufficiently large $m\geq 0$,  one has \begin{equation}\label{E:cubic2}
    Q_{m+1}\geq R^4\beta^{-4}Q_m^3.
    \end{equation}
    Since the definition of $E_{\bQ}$ involves $\liminf_{m\to +\infty}$, we may assume that \eqref{E:cubic2} holds for all $m\geq 0$. Moreover, we may also assume that $Q_0=R$.

    Given Bob's initial ball $B_0=B(z_0,\rho_0)$, we choose $\eta>0$ sufficiently small so that \eqref{E:eta} and \eqref{E:eta1} hold, as well as
    \begin{equation}\label{E:eta2}
        \frac32\eta R^4<\varepsilon,\quad\text{and}\quad\frac{\theta^2R^4}{\beta^2\rho_0^2Q_0}\leq Q_1,\quad \text{where }\theta=\eta R^4.
    \end{equation}
    Then Alice's initial choice $\cL_0$ goes as in Section \ref{SS:initial}, and write $N_0=0$.

    Let $m\geq 1$ and let the $m$-th stage begin at the turn $N_{m-1}+1$. Given Bob's ball $B_{N_{m-1}+1}=B(z_{N_{m-1}+1},\rho_{N_{m-1}+1})$, we consider the key inequality: \begin{equation*}
        \frac{\theta^2R^4}{\rho_{N_{m-1}+1}^2Q_{m-1}}\leq Q_m\tag{$I_m$}.
    \end{equation*}
    We shall show that: \begin{equation}\label{E:induction}
        (I_m)\Longrightarrow \text{completion of the $m$-th stage}\Longrightarrow (I_{m+1}).
    \end{equation}
    Note that the inequality \eqref{E:eta2} gives $(I_1)$. So by induction, all our stages will be completed. 
    
    For the first implication in \eqref{E:induction}, given $(I_m)$, we proceed as in Section \ref{SS:general} to choose $N_m>N_{m-1}$ and Alice's collections \eqref{E:mcollection} satisfying the requirements \eqref{E:mdanger} and \eqref{E:npotential}, as long as $\rho_n\to 0$ as $n\to +\infty$.
    
    For the second implication in \eqref{E:induction}, we note that in the process of the $m$-th stage, $$
    j_m:=\left\lfloor \frac{2\log{Q_m}}{\log{R}}\right\rfloor
    $$
    is the largest number in $J_m$. 
    So $N_m=\tau(j_m)$. In particular, we see that $$
    \rho_{N_m+1}\geq \beta\rho_{N_m}=\beta\rho_{\tau(j_m)}\overset{\eqref{E:trigger}}{>}\beta^2\theta R^{-j_m}\overset{\eqref{E:triggerj}}{\geq} \beta^2\theta Q_m^{-2}.
    $$
    It follows that $$
    \frac{\theta^2R^4}{\rho_{N_m+1}^2Q_m}<R^4\beta^{-4}Q_m^3\overset{\eqref{E:cubic2}}{\leq}Q_{m+1}.
    $$
    This verifies $(I_{m+1})$ as desired.

    It remains to identify every nondefault outcome.  If the radii
    tend to zero and the outcome avoids all the neighborhoods Alice
    deleted, then \eqref{E:0danger} and \eqref{E:mdanger}, stage by
    stage, show that it avoids both sets \eqref{E:Eeta0} and
    \eqref{E:Eeta1}.  The strengthened form of Corollary \ref{C:Eeta}
    therefore places it in $E_{\bQ}$.  Thus the displayed strategy has
    all the asserted properties and is winning.  Since $\beta$ and
    $\gamma$ were arbitrary, Lemma \ref{L:HAWHPW} yields that
    $E_{\bQ}$ is HAW. The proof is complete.
\end{proof}

Furthermore, using the fact that any finite intersection of HAW sets is again HAW, we can prove Theorem \ref{T:1+epsilon} which substantially strengthens Lemma \ref{L:cubic}.

\begin{proof}[Proof of Theorem \ref{T:1+epsilon}]
We first slightly generalize Lemma \ref{L:cubic} as follows: 
\begin{equation}\label{E:gencubic}
    \lim_{m\to+\infty}\frac{Q_{m+k}}{Q_m^3}=+\infty\text{ for some }k\geq 1\Longrightarrow E_{\bQ}\text{ is HAW on }\RR^2.
\end{equation}
In fact, for $k\geq 1$, consider the $k$ subsequences of $\bQ=(Q_m)_{m\geq 0}$: $$
\bQ^{(r)}:=(Q_{r+kj})_{j\geq 0},\quad r\in \{0,1,\cdots,k-1\}.
$$
By the condition, each subsequence $\bQ^{(r)}$ satisfies $$
\lim_{j\to +\infty}\frac{Q_{r+k(j+1)}}{Q_{r+kj}^3}=+\infty,
$$
so it follows from Lemma \ref{L:cubic} that each $E_{\bQ^{(r)}}$ is HAW on $\RR^2$. Therefore, the finite intersection $$
E_{\bQ}=\bigcap_{r=0}^{k-1}E_{\bQ^{(r)}}
$$
is also HAW on $\RR^2$. This proves \eqref{E:gencubic}.

Now suppose that the sequence $\bQ=(Q_m)_{m\geq 0}$ satisfies the condition \eqref{E:1+epsilon}, that is, there exists some $c>0$ and $\tau>1$ such that $Q_{m+1}\geq c Q_m^{\tau}$ for all $m\geq 0$. We choose some $k\geq 1$ such that $\tau^k>3$. It follows that $$
Q_{m+k}\geq c^{1+\tau+\cdots+\tau^{k-1}}\cdot Q_m^{\tau^k}= (c^{1+\tau+\cdots+\tau^{k-1}}Q_m^{\tau^k-3})\cdot Q_m^3.
$$
Since $Q_m\to +\infty$ as $m\to +\infty$, we see that $\frac{Q_{m+k}}{Q_m^3}\to +\infty$ as $m\to +\infty$. So we conclude from \eqref{E:gencubic} that $E_{\bQ}$ is HAW on $\RR^2$. This completes the proof.
\end{proof}


\section{Proofs of Theorems \ref{T:main-curves} and \ref{T:lines}}\label{S:curves}

 By the definition of HAW on a manifold in Section \ref{hawmfld}, it is enough to work on an arbitrary coordinate arc of $\mathcal C$. Let
$\psi:V\to\mathcal C$, where $V\subset\RR$ is an open interval, be a regular $C^2$ embedding of $V$ into $\mathcal C$.  Choose a
$C^2$ diffeomorphism $\sigma:\RR\to V$ and put
\[
   \phi=(X,Y):=\psi\circ\sigma.
\]
The curvature assumption means that the set
\begin{equation}\label{E:curve-determinant}
  \{ t \in \RR: \Delta_\phi(t) \neq 0 \} \qquad \text{where} \  \Delta_\phi(t):=X'(t)Y''(t)-Y'(t)X''(t)
\end{equation}
is absolute winning, equivalently, HAW.  The winning property is independent of the regular parametrization: if $\sigma$ is a regular $C^2$ change of parameter, then
\[
 \Delta_{\phi\circ\sigma}
   =(\sigma')^3(\Delta_\phi\circ\sigma).
\]

For $(A,B)\in \ZZ^2\setminus\{(0,0)\}$, put $\xi(A,B)=\max\{|A|,|B|\}$.  For $c>0$, define
\begin{equation}\label{E:Dc}
 D_c=\left\{t\in\RR:
 \begin{gathered}
 \xi(A,B)^2|AX(t)+BY(t)+C|\geq c\\
 \text{for all }(A,B,C)\in\ZZ^3\text{ with } (A,B)\ne(0,0)
 \end{gathered}\right\}.
\end{equation}
It is enough to use primitive triples in the above definition of $D_c$. Geometrically, the set $D_c$ consists of the parameters whose $\phi$-images stay outside the open tube about every primitive integral line $Ax+By+C=0$ of radius
\begin{equation}\label{E:Dc-radius}
 \frac{c}{\xi(A,B)^2\sqrt{A^2+B^2}}\asymp\frac{c}{\xi(A,B)^3}.
\end{equation}

Fix a sequence $\bQ$ satisfying \eqref{E:cubic}, and let
\begin{equation}\label{E:ScQ}
 S_{\bQ,c}=\phi^{-1}(E_{\bQ})\cup(\RR\setminus D_c).
\end{equation}

Our first goal is to transfer the strategy from Lemma \ref{L:cubic} on curves and prove that every such set is winning.

\begin{lemma}\label{P:conditional-curves}
For every $c>0$, the set $S_{\bQ,c}$ is HAW on $\RR$.
\end{lemma}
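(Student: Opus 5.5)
We will show that $S_{\bQ,c}$ is hyperplane potential winning on $\RR$, which suffices by Lemma \ref{L:HAWHPW} in dimension one, where hyperplanes are points and the absolute and potential games coincide in winning sets. We will not build a new strategy on $\RR$. Instead, we pull back the structured ambient strategy of Lemma \ref{L:cubic}: every ambient deletion $L^{(r)}$ with primitive equation $Ax+By+C=0$ and $r\xi(L)|AB|<\varepsilon$ is replaced by a deletion of $\phi^{-1}(L^{(r)})$ near the current parameter ball. Outcomes in $\RR\setminus D_c$ already lie in $S_{\bQ,c}$, so Alice only has to handle outcomes in $D_c$. For those outcomes she may therefore ignore any line $L$ whose tube of radius $\asymp r$ lies inside the forbidden tube \eqref{E:Dc-radius}, i.e.\ lines with $r\lesssim c/\xi(L)^3$.

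The plan has three steps. First, we localize. We fix a bounded parameter interval on which $\phi$ is bi-Lipschitz onto its image, with $|\phi'|\asymp 1$ and $|\phi''|\lesssim 1$. Bob's parameter ball $I=B(t,\rho)$ then corresponds to an arc of diameter $\asymp\rho$ inside a planar ball of radius $\asymp\rho$. Alice simulates the planar $(\beta',\gamma)$-potential game, in which the planar ball at turn $n$ is $B(\phi(t_n),C\rho_n)$ and $\beta'$ is comparable to $\beta$.

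Second, we bound preimages of line neighborhoods. For a line $L$ with unit normal $u$, set $f(s)=\langle u,\phi(s)\rangle-d$. Then $\phi^{-1}(L^{(r)})\cap I=\{|f|\le r\}$, with $f'=\langle u,\phi'\rangle$ and $|f''|\lesssim 1$. We split into two cases at the current scale $\rho_n$. In the transversal case $|f'|\gtrsim r^{1/2}$, and more usefully $|f'|\gtrsim\rho_n$, the sublevel set $\{|f|\le r\}$ is covered by $O(1)$ intervals of length $\lesssim r/|f'|+\sqrt r$. In the tangential case, $L$ nearly contains a piece of the arc, and $\{|f|\le r\}\cap I$ may be a whole interval of length $\asymp\sqrt r$. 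In that case we postpone the line: we wait until Bob's radius is about $\sqrt r$ and delete one interval of that length. This keeps the potential cost $r^{\gamma/2}$ at radius scale $\sqrt r$, so the ratio to the current budget is comparable to the ambient ratio. Doing this bookkeeping with $\gamma$ replaced by $\gamma/2$ in the ambient strategy (Lemma \ref{L:cubic} holds for every $\gamma>0$) keeps the total one-dimensional potential at turn $n$ below $(\beta\rho_n)^{\gamma}$, up to constants that are absorbed by taking $\beta'$ small.

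Third, and this is the main obstacle, we must control deletions delayed by the tangential case. The outcome can fail to avoid the planar tubes only through lines that were postponed and never dealt with, and we have to rule this out quantitatively. Here we use \eqref{E:structured-control} together with $D_c$. For a postponed line, the outcome $t$ satisfies $|f(t)|\le r$ near a tangency. If $r<c/(\xi^2\sqrt{A^2+B^2})$, then $t\notin D_c$ and we may drop the line altogether. Otherwise $r\gtrsim c\xi^{-3}$. Combined with $r\xi|AB|<\varepsilon$, this forces $|AB|\lesssim\varepsilon\xi^2/c$, so $\min(|A|,|B|)\lesssim\varepsilon\xi/c$. Choosing $\varepsilon\ll c$ makes such lines nearly axis-parallel.

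I expect the delicate part to be showing that, for these nearly horizontal or vertical lines, the number of postponed lines meeting a ball at a given scale is bounded. The counting would go via the simplex-lemma separation: nearly parallel rational lines of comparable height are spaced $\gtrsim 1/\xi$ apart. Once that count is bounded, the delayed deletions fit inside the potential budget of the later turns at which they are executed. Finally, an outcome with radii tending to zero either lies outside $D_c$, or avoids all pulled-back deletions and hence avoids all ambient tubes, so that $\phi(t)\in E_{\bQ}$ by Lemma \ref{L:cubic}.
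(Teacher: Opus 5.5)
Your plan has a genuine gap, located in Steps 2--3.

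\textbf{The scales in Step 2 do not work.} A single ambient tube deleted at turn $n$ may have radius $r$ of order $\beta\rho_n$, so $\sqrt r$ can be much larger than $\rho_n$. In your tangential case, the arc over Bob's current interval deviates from $L$ by only $O(\rho_n^2)\ll r$, so $\phi^{-1}(L^{(r)})$ can be the whole interval. ``Waiting until Bob's radius is about $\sqrt r$'' is then impossible: that scale has already been passed, and later balls are nested inside the current one. Likewise, $|f'|\gtrsim\rho_n$ only yields preimages of length $r/\rho_n$, which can be of order $\beta$ rather than $\beta\rho_n$. A legal one-dimensional deletion (radius $O(r)$) exists only when the angle between $L$ and the arc is bounded below by an absolute constant. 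Every other line must therefore be discarded outright via $D_c$, with no postponement.

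\textbf{The missing fact.} Your Step~3 computation is the right ingredient, but you pair it with counting instead of with the fact that actually closes the argument. If $m\le|X'|,|Y'|\le M$ on the interval and $|AX'(u)+BY'(u)|<\xi m/2$ at some $u$, then $|AB|>\xi^2m/(2M)$. Together with $r\xi|AB|<\varepsilon=cm/(2\sqrt2 M)$, this gives $\xi^2|AX+BY+C|<c$ on the whole pulled-back tube (Lemma~\ref{L:tube-pullback}). So wherever the tangent stays uniformly away from the axis directions, the nearly axis-parallel lines you propose to count are automatically uniformly transversal. Their preimages lie in intervals of radius at most $2\sqrt2 r/m$, the budget is handled by shrinking the ambient $\beta$, and no postponement or counting is needed.

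\textbf{The set $Z$ and the curvature hypothesis.} What remains is $Z=\{X'Y'=0\}$, and there a bound on the number of lines cannot help. Near $t_0$ with $Y'(t_0)=0$, one horizontal tube can contain the entire arc over Bob's interval while $t_0\in D_c$, so the pullback gives no control. For horizontal lines \eqref{E:structured-control} is vacuous; for instance, the Type~2 covers have radius about $\eta q^{-2}\sqrt{q/Q}$, far larger than the $D_c$-radius $cq^{-3}$. The paper does not fight this; it removes $Z$. This is exactly where the curvature hypothesis enters, and your proposal never uses it:
\begin{itemize}
\item If $X'(t_0)=0$ and $\Delta_\phi(t_0)\ne0$, then $X''(t_0)\ne0$, so such zeros are isolated; similarly for $Y'$.
\item Hence the closed set $Z$ meets $\{\Delta_\phi\ne0\}$ in a countable set, and $\RR\setminus Z$ is an open HAW set.
\item The pullback argument proves HAW on every bounded interval $J$ with $\overline J\subset\RR\setminus Z$ (Lemma~\ref{L:local-conditional}).
\item Lemma~\ref{L:HAW-open} then lifts HAW on $\RR\setminus Z$ to HAW on $\RR$.
\end{itemize}
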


The lemma is based on the following two lemmata.

\begin{lemma}\label{L:tube-pullback}
Let $J\subset\RR$ be an open interval such that
\begin{equation}\label{E:curve-derivative-bounds}
  m\leq |X'(t)|,|Y'(t)|\leq M\qquad \text{for all} \ t\in J
\end{equation}
 for some constants $0<m\leq M$, and put
\begin{equation}\label{E:curve-pullback-constants}
 \varepsilon=\frac{cm}{2\sqrt2\,M}.
\end{equation}
Let $L:Ax+By+C=0$ be a line with the primitive integral triple $(A,B,C)$. 
Set $\xi=\max\{|A|,|B|\}$, $F(t)=AX(t)+BY(t)+C$ for $t\in J$, and choose $r>0$ with
\begin{equation}\label{E:curve-tube-control}
   r\xi|A\cdot B|<\varepsilon.
\end{equation}
Then for any closed interval $\mathcal B\subset J$,
\begin{enumerate}[label=\rm (\arabic*)]
\item\label{N:curve-pullback1} If $|F'(u)|\geq \xi m/2$ for every $u\in\mathcal B$, the set
      $\mathcal B\cap\phi^{-1}(L^{(r)})$ is contained in an
      interval of radius at most $2\sqrt{2}r/m$;
\item\label{N:curve-pullback2} If the derivative condition in part (1) fails, then
      $J\cap \phi^{-1}(L^{(r)})\subset\RR\setminus D_c$.
\end{enumerate}
Every vertical or horizontal line satisfies the derivative condition in
part~\ref{N:curve-pullback1}.
\end{lemma}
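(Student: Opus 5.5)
The plan is to study the scalar function $F(t)=AX(t)+BY(t)+C$ on $J$. Membership $\phi(t)\in L^{(r)}$ means $|F(t)|\le r\sqrt{A^2+B^2}\le \sqrt2\, r\xi$. The two parts then correspond to whether $F$ is uniformly steep on $\mathcal B$ or has a point of small derivative.

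For part (1), assume $|F'(u)|\ge \xi m/2$ on $\mathcal B$. Since $F'$ is continuous and nonvanishing on the interval $\mathcal B$, it has constant sign there, so $F$ is strictly monotone on $\mathcal B$. Hence $\mathcal B\cap\phi^{-1}(L^{(r)})=\{u\in\mathcal B:|F(u)|\le\sqrt2\,r\xi\}$ is an interval. For any two points $u,v$ of it, the mean value theorem gives
\[
\tfrac{\xi m}{2}|u-v|\le|F(u)-F(v)|\le 2\sqrt2\,r\xi .
\]
Therefore the interval has length at most $4\sqrt2\,r/m$, i.e. radius at most $2\sqrt2\,r/m$.

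The last remark is checked directly. For a vertical line, $B=0$ and $A=\pm1$ by primitivity, so $\xi=1$ and $|F'|=|X'|\ge m\ge m/2$. The horizontal case is symmetric.

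For part (2), suppose $|F'(u_0)|<\xi m/2$ for some $u_0\in\mathcal B\subset J$. First I will show that both $A$ and $B$ are nonzero. If $B=0$, then $|F'(u_0)|=|A||X'(u_0)|\ge\xi m$, a contradiction; the case $A=0$ is the same. Next I compare $|A|$ and $|B|$. Say $\xi=|A|$. From $|AX'+BY'|<\xi m/2$ we get $|B||Y'(u_0)|>|A||X'(u_0)|-\xi m/2\ge \xi m/2$, so $|B|>\xi m/(2M)$. Symmetrically, in the other case $|A|>\xi m/(2M)$. In both cases $|A\cdot B|>\xi^2 m/(2M)$.

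Combining this with \eqref{E:curve-tube-control} gives $r\xi\cdot \xi^2 m/(2M)<\varepsilon=cm/(2\sqrt2 M)$, i.e. $r\xi^3<c/\sqrt2$. Now take any $t\in J\cap\phi^{-1}(L^{(r)})$. Then
\[
\xi^2|F(t)|\le\xi^2\cdot\sqrt2\,r\xi=\sqrt2\,r\xi^3<c .
\]
This exhibits the triple $(A,B,C)$ violating the defining inequality of $D_c$ in \eqref{E:Dc}, so $t\notin D_c$. Note that this conclusion holds on all of $J$, not just on $\mathcal B$, because the lower bound on $|AB|$ involves only the line and not the point $t$.

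The only delicate step is the lower bound $|AB|\gtrsim\xi^2 m/M$ in part (2). The constant $\varepsilon$ in \eqref{E:curve-pullback-constants} is chosen precisely so that this bound, together with \eqref{E:curve-tube-control}, yields the threshold $c$. I expect the main care to go into tracking the factor $\sqrt2$ coming from $\sqrt{A^2+B^2}\le\sqrt2\,\xi$ in the constants.
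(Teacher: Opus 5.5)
Your proof is correct and follows the paper's argument essentially step for step: the mean value theorem for part (1), and for part (2) the bound $|A\cdot B|>\xi^2m/(2M)$ obtained at the point where $|F'|<\xi m/2$, followed by $\xi^2|F(t)|\le\sqrt2\,r\xi^3<c$. There are two harmless slips. First, primitivity of $(A,0,C)$ does not force $A=\pm1$ (the line $x=1/2$ has primitive triple $(2,0,-1)$), but the remark on vertical and horizontal lines still holds because $|F'|=|A|\,|X'|=\xi|X'|\ge\xi m$, exactly as you argue in part (2). Second, the displayed set equality in part (1) should be an inclusion, which is all the mean value argument needs.
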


\begin{proof}
Suppose first that the derivative condition in part~\ref{N:curve-pullback1} holds.  If
$t,t'\in\mathcal B\cap\phi^{-1}(L^{(r)})$, then
\begin{equation}\label{E:t_in_Lr}
|F(t)|,|F(t')|\leq\sqrt{A^2+B^2}\cdot r\leq\sqrt2\xi r.
\end{equation}
The continuous function $F'$ has constant sign on $\mathcal B$, so the
mean-value theorem gives
\[
 \xi m|t-t'|/2\leq |F'(u)| |t - t'| =  |F(t)-F(t')|\leq2\sqrt2 \xi r
\]
for some $u$ between $t$ and $t'$. Thus the set $\mathcal B\cap\phi^{-1}(L^{(r)})$ has diameter at most $4\sqrt2r/m$, and hence is contained in an interval of radius at most $2\sqrt2r/m$.

Now suppose that the derivative condition fails, that is, there is $u\in\mathcal B$ such that
\begin{equation}\label{E:curve-near-tangent}
 |AX'(u)+BY'(u)|<\xi m/2.
\end{equation}
If $\xi=|A|$, then
\[
 |B|\,|Y'(u)|
 \geq |A|\,|X'(u)|-|AX'(u)+BY'(u)|
 >\xi m-\xi m/2=\xi m/2.
\]
Since $|Y'(u)|\leq M$, this implies
\begin{equation}\label{E:AB>}
|A\cdot B|> \xi^2m/(2M).
\end{equation}
The same conclusion follows symmetrically if
$\xi=|B|$.  We conclude that for every $t\in J$ with $\phi(t)\in L^{(r)}$, one has
\begin{equation}
 \xi^2|F(t)|
 \overset{\eqref{E:t_in_Lr}}{\leq}\sqrt2\xi^3r
 =\sqrt2 \left(r \xi|A\cdot B|\right)\frac{\xi^2}{|A\cdot B|}
 \overset{\eqref{E:curve-tube-control}, \eqref{E:AB>}}{<}\sqrt2\varepsilon\frac{2M}{m}\overset{\eqref{E:curve-pullback-constants}}{=}c. 
\end{equation}
This implies that $t\notin D_c$. The proof of part~\ref{N:curve-pullback2} is complete.  

Finally, if $A=0$ or $B=0$, then $|F'(u)|=\xi|Y'(u)|$ or $\xi|X'(u)|$, which is at least $\xi m$ due to \eqref{E:curve-derivative-bounds}.  Thus a vertical or horizontal line always falls under part~\ref{N:curve-pullback1}.
\end{proof}

Put
\begin{equation}\label{E:curve-exceptional}
 Z=\{t\in\RR:X'(t)Y'(t)=0\}.
\end{equation}

\begin{lemma}\label{L:local-conditional}
For every $c>0$ and every bounded open interval $J$ with
$\overline J\subset\RR\setminus Z$, the set $S_{\bQ,c}$ is HAW on $J$.
\end{lemma}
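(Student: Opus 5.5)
We will prove that $S_{\bQ,c}$ is winning for the one-dimensional potential game on $J$; by Lemma \ref{L:HAWHPW} (with $d=1$, where hyperplanes are points and hyperplane neighborhoods are intervals) this gives HAW on $J$. Since $\overline J$ is compact and disjoint from $Z$, and $X',Y'$ are continuous, there are constants $0<m\leq M$ with \eqref{E:curve-derivative-bounds} on $J$ (after shrinking to a slightly larger compact neighborhood if needed). Fix $\varepsilon$ as in \eqref{E:curve-pullback-constants}. Given $\beta\in(0,1)$ and $\gamma>0$ for the game on $J$, we run a shadow game in $\RR^2$: Alice uses the strategy of Lemma \ref{L:cubic} with a suitably modified $\beta'$ and the same $\gamma$, and with the structural bound \eqref{E:structured-control} for this $\varepsilon$.

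Bob's interval $\mathcal B_n=B(t_n,\rho_n)\subset J$ is transported to the planar ball $B_n'=B(\phi(t_n),M\sqrt2\,\rho_n)$, which contains $\phi(\mathcal B_n)$. These balls need not be nested. To fix this, we will either enlarge the balls by a fixed factor, or, more simply, note that the strategy of Lemma \ref{L:cubic} uses Bob's balls only through their radii and through the requirement that dangerous sets meet the current ball. We therefore re-run the construction of Sections \ref{SS:initial}--\ref{SS:general} directly with the balls $B_n'$. The covering statements (Propositions \ref{P:potential1} and \ref{P:potential2}) only use $\Delta\cap B'\neq\varnothing$ for a ball of the stated radius, so they hold verbatim. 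Moreover, $\rho(B_{n+1}')\geq\beta\rho(B_n')$ and $\phi(\mathcal B_{n+1})\subset\phi(\mathcal B_n)$, so every dangerous set meeting $\phi(\bigcap_n\mathcal B_n)$ meets each $B_n'$. Thus at turn $n$ Alice obtains a finite family $\cL_n$ of rational line neighborhoods $L^{(r)}$ satisfying $r\xi|AB|<\varepsilon$ and $\varphi(\cL_n)\leq(\beta'\cdot M\sqrt2\rho_n)^\gamma$. These families cover all planar dangerous sets relevant to $E_{\bQ}$ that meet $\phi(\mathcal B_n)$.

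Next comes the pullback step. For each $L^{(r)}\in\cL_n$, apply Lemma \ref{L:tube-pullback} with $\mathcal B=\mathcal B_n$. In case (1), $\mathcal B_n\cap\phi^{-1}(L^{(r)})$ lies in an interval of radius $2\sqrt2 r/m$, and Alice deletes that interval. In case (2), the entire preimage lies in $\RR\setminus D_c\subset S_{\bQ,c}$, so Alice deletes nothing for it. The total potential of the deleted intervals is at most $(2\sqrt2/m)^\gamma\varphi(\cL_n)\leq (2\sqrt2/m)^\gamma(\beta' M\sqrt2\rho_n)^\gamma$, and this is at most $(\beta\rho_n)^\gamma$ once we choose $\beta'=\beta m/(4M)$. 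Since Lemma \ref{L:cubic} works for every $\beta'$, this choice is allowed.

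Finally, we identify the outcome. Suppose Alice does not win by default, so $\rho_n\to0$ and the outcome $t$ avoids all deleted intervals. Either $t\notin D_c$, in which case $t\in S_{\bQ,c}$, or $t\in D_c$. In the latter case, $\phi(t)$ avoids every $L^{(r)}$ of case (1), because those preimages were deleted. It also avoids every case-(2) neighborhood, since points of $\phi^{-1}(L^{(r)})\cap J$ lie outside $D_c$. So $\phi(t)$ avoids all of Alice's planar neighborhoods, and by the conclusion of Lemma \ref{L:cubic}, $\phi(t)\in E_{\bQ}$. The main obstacle is the non-nestedness of the transported balls, i.e.\ checking that the stage/trigger-time bookkeeping of Lemma \ref{L:cubic}, including the inequalities $(I_m)$, uses only radii and intersection conditions. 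We expect this to go through because those inequalities involve only $\rho_n$, and the radii of the $B_n'$ obey the same ratio rules as Bob's radii.
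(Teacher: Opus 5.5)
Your proposal is correct and is essentially the paper's proof. It uses the same shadow balls $B(\phi(t_n),\sqrt2M\rho_n)$, the structured strategy of Lemma \ref{L:cubic} with $\widetilde\beta=\beta m/(4M)$ and $\varepsilon$ from \eqref{E:curve-pullback-constants}, and deletion of the case-(1) pullback intervals from Lemma \ref{L:tube-pullback} with case (2) absorbed into $\RR\setminus D_c$. The potential count and outcome identification are also the same (the paper additionally makes explicit that winning on $J$ is equivalent to $S_{\bQ,c}\cup(\RR\setminus J)$ winning on $\RR$, which is what lets you invoke Lemma \ref{L:HAWHPW}, stated on $\RR^d$).

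The one misstep is your ``main obstacle'': the shadow balls are in fact nested. Indeed, $\mathcal B_{n+1}\subseteq\mathcal B_n$ means $|t_{n+1}-t_n|+\rho_{n+1}\leq\rho_n$, and $\phi$ is $\sqrt2M$-Lipschitz on the interval $J$, so $|\phi(t_{n+1})-\phi(t_n)|+\sqrt2M\rho_{n+1}\leq\sqrt2M\rho_n$. Hence they are legal ambient Bob moves, and Lemma \ref{L:cubic} applies as a black box without reopening its stage and trigger-time bookkeeping.
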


\begin{proof}
Choose $m,M$ so that \eqref{E:curve-derivative-bounds} holds on
$\overline J$, 
and fix arbitrary parameters $\beta\in(0,1)$ and $\gamma>0$ for the one-dimensional potential game. We choose $0<\widetilde\beta\leq\beta$ such that
\begin{equation}\label{E:curve-beta-tilde}
\frac{4M}{m}\cdot\widetilde\beta\leq\beta.
\end{equation}
Let Bob's $n$-th interval be $B_n=B(t_n,\rho_n)\subset J$, and define the
ambient shadow ball
\begin{equation}\label{E:curve-shadow-ball}
 \widehat B_n=B\bigl(\phi(t_n),\sqrt2M\rho_n\bigr)\subset\RR^2.
\end{equation}
These balls $\{\widehat B_n:n\geq 0\}$ are nested.  Indeed, the fact (from $B_{n+1}\subseteq B_n$)
\[
 |t_{n+1}-t_n|+\rho_{n+1}\leq\rho_n
\]
together with $\phi$ being $\sqrt{2}M$-Lipschitz (from the derivative bounds) implies
\[
 |\phi(t_{n+1})-\phi(t_n)|+\sqrt2M\rho_{n+1}
 \leq\sqrt2M\rho_n.
\]
Their radius ratio is at least $\beta\geq\widetilde\beta$, so they are
legal Bob moves in the ambient $(\widetilde\beta,\gamma)$-potential game. We will refer to it simply as \emph{the ambient game}.

We apply the structured strategy of Lemma \ref{L:cubic} to the balls $\widehat B_n$ with $\epsilon$ as in \eqref{E:curve-pullback-constants}.  At turn $n$ it
produces a finite family
\[
 \{L_{n,i}^{(r_{n,i})}:i\in I_n\},\qquad
 L_{n,i}:A_{n,i}x+B_{n,i}y+C_{n,i}=0,
\]
whose coefficient triples are primitive and satisfy \eqref{E:structured-control}.
For every $i\in I_n$ define
\begin{equation}\label{E:curve-Fni}
 F_{n,i}(t)=A_{n,i}X(t)+B_{n,i}Y(t)+C_{n,i}.
\end{equation}
Let $I_n^{\mathrm{tr}}$ consist of those $i\in I_n$ for which the set
\begin{equation}\label{E:curve-tube-parameter-set}
 \left\{t\in B_n: \phi(t) \in L_{n,i}^{(r_{n,i})} \right\} = \left\{t\in B_n:
 |F_{n,i}(t)|\leq
 \sqrt{A_{n,i}^2+B_{n,i}^2}\cdot r_{n,i}\right\}
\end{equation}
is nonempty and
\[
 |F_{n,i}'(u)|\geq
 \max\{|A_{n,i}|,|B_{n,i}|\}\cdot m/2\qquad \text{for all} \ u\in B_n.
\]
For $i\in I_n^{\mathrm{tr}}$, Lemma \ref{L:tube-pullback} \ref{N:curve-pullback1}, applied with $\mathcal{B} = B_n$, supplies an interval $\mathcal A_{n,i}$ containing
\eqref{E:curve-tube-parameter-set}, of radius
$\varrho_{n,i}\leq 2\sqrt{2}r_{n,i}/m$. Let Alice delete precisely these intervals. This move is legal because
\begin{equation}\label{E:curve-potential-budget}
 \sum_{i\in I_n^{\mathrm{tr}}}\varrho_{n,i}^{\gamma} \leq \left(\frac{2\sqrt{2}}{m}\right)^\gamma\cdot \sum_{i\in I_n}r_{n,i}^{\gamma} \leq\left(\frac{4M\widetilde{\beta}\rho_n}{m}\right)^\gamma \overset{\eqref{E:curve-beta-tilde}}{\leq}(\beta\rho_n)^\gamma,
\end{equation}
where the middle inequality holds because of Alice's move in the ambient game. 

It remains to show that Alice's strategy described above is winning. Suppose that the radii shrink to zero and that the outcome $t_\infty$ lies in none of Alice's deleted intervals.  If $t_\infty\notin D_c$, then $t_\infty\in S_{\bQ,c}$. Now we assume that $t_\infty\in D_c$.  We claim that 
\begin{equation}\label{E:outcome_of_ambient}
\phi(t_\infty)\notin L_{n,i}^{(r_{n,i})} \qquad \text{for all} \ n \ge 0 \text{ and all } i \in I_n.
\end{equation}
Suppose, to the contrary, that
$\phi(t_\infty)\in L_{n,i}^{(r_{n,i})}$ for some $n\geq 0$ and $i\in I_n$. Then
$t_\infty\in B_n$ belongs to the set
\eqref{E:curve-tube-parameter-set}. Two options are possible:
\begin{itemize}
    \item If the derivative
condition defining $I_n^{\mathrm{tr}}$ holds, then by construction $t_\infty\in\mathcal A_{n,i}$, which contradicts the assumption that $t_\infty$ does not belong to Alice's deleted intervals.  \item If it fails, Lemma
\ref{L:tube-pullback} \ref{N:curve-pullback2} yields $t_\infty\notin D_c$, contradicting the assumption.  
\end{itemize}

These two contradictions prove the claim \eqref{E:outcome_of_ambient}. Moreover, we note that the radii $\sqrt2M\rho_n$ in the ambient game tend to zero, and 
that $\phi(t_\infty)\in\widehat B_n$ for every $n$. Thus $\phi(t_\infty)$ is the outcome of the ambient game. It follows from Lemma \ref{L:cubic} that $\phi(t_\infty)\in E_{\bQ}$ and hence $t_\infty\in S_{\bQ,c}$.

This proves the potential winning property for games on $J$. Finally, the proof will be complete once we use the following elementary observation, valid for both the hyperplane absolute and hyperplane potential games: a set \(S\subset\RR\) is winning on an open interval \(J\) if and only if
$ S\cup(\RR\setminus J)$
is winning on \(\RR\). 
\end{proof}



\begin{proof}[Proof of Lemma \ref{P:conditional-curves}]
Fix $c>0$. The set $\RR \setminus Z$, where $Z$ in \eqref{E:curve-exceptional}, is HAW and open. Indeed, the set $\{ t \in \RR: \Delta_\phi(t) \neq 0 \}$, where $\Delta_\phi$ is defined in \eqref{E:curve-determinant}, is HAW by hypothesis.  If $X'(t_0)=0$, regularity gives $Y'(t_0)\ne0$; if $\Delta_\phi(t_0)\ne0$, then
\[
 \Delta_\phi(t_0)=-Y'(t_0)X''(t_0)\ne0.
\]
Thus $X''(t_0)\ne0$, so this zero of $X'$ is isolated.  A discrete subset of $\RR$ is countable, so the zeros of $X'$ outside the zero set of $\Delta_\phi$ form a countable set. The same argument applies to $Y'$; so $ \{t:\Delta_\phi(t)\ne0\}\cap(\RR\setminus Z) \subseteq \RR\setminus Z$ is an intersection of a HAW set and a co-countable (also HAW) set, thus it is HAW.  

Choose a countable cover of $\RR\setminus Z$ by bounded open intervals $J_k$ with $\overline{J_k}\subset\RR\setminus Z$; that is, $\{ (J_k, \id_{J_k}) \}$ is an atlas on $\RR\setminus Z$.  
Lemma \ref{L:local-conditional} proves that $S_{\bQ,c}$ is HAW on each $J_k$. By definition (see Section \ref{hawmfld}), $S_{\bQ,c}$ is HAW on $\RR\setminus Z$, which by Lemma \ref{L:HAW-open} implies that it is HAW on $\RR$.
\end{proof}

Let
\begin{equation}\label{E:curve-D}
 D=\bigcup_{c>0}D_c.
\end{equation} 

\begin{lemma}\label{L:D_is_bad}
One has
    \begin{equation}\label{E:curve-transference}
D=\phi^{-1}\!\left(\Bad\!\left(\tfrac12,\tfrac12\right)\right).
\end{equation}
\end{lemma}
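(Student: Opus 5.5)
The identity is pointwise: for a fixed point $(x,y)=\phi(t)$ we must show that
\[
\inf_{(A,B,C)}\xi(A,B)^2|Ax+By+C|>0 \iff (x,y)\in\Bad(\tfrac12,\tfrac12).
\]
The left-hand condition is the dual (linear form) version of badly approximable. The right-hand condition is the simultaneous version, since $\Bad(\frac12,\frac12)$ means $\inf_q q^{1/2}\max\{\la qx\ra,\la qy\ra\}>0$. So the statement is exactly Khintchine's transference principle for the weights $(\frac12,\frac12)$, applied at the point $\phi(t)$. The parametrization plays no role, and we prove both inclusions directly with Minkowski's convex body theorem.

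\emph{Step 1: dual bad implies simultaneous bad.} Suppose $\xi(A,B)^2|Ax+By+C|\geq c$ for all integer triples with $(A,B)\neq(0,0)$, and assume for contradiction that $q\max\{\la qx\ra,\la qy\ra\}^2<\delta$ for some $q$ with $\delta$ tiny. Write $qx=p+\epsilon_1$ and $qy=r+\epsilon_2$ with $|\epsilon_i|<\delta^{1/2}q^{-1/2}$. Apply Minkowski to the lattice $\ZZ^3$ and the symmetric convex body
\[
|A|,|B|\leq T,\qquad |Ap+Br+Cq|<1,\qquad \text{(a bounded range for $C$)},
\]
with $T$ chosen around $(q/\delta)^{1/4}$ after balancing the volumes. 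This gives a nonzero $(A,B,C)$ with $Ap+Br+Cq=0$, so that
\[
|Ax+By+C|=\frac{|A\epsilon_1+B\epsilon_2|}{q}\ll \frac{T\delta^{1/2}}{q^{3/2}}.
\]
The case $(A,B)=(0,0)$ forces $C=0$, which is excluded. Hence $\xi^2|Ax+By+C|\ll T^3\delta^{1/2}q^{-3/2}$. Rather than fighting the exponents in this construction, we will instead invoke the standard transference inequalities (Cassels, Chapter V), which directly give $\omega$-type constant bounds: if one of the two infima is positive then so is the other, with explicit dependence of the constants. Choosing $\delta$ small relative to $c$ then yields the contradiction.

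\emph{Step 2: simultaneous bad implies dual bad.} This is the symmetric argument. Suppose $|Ax+By+C|<\delta\xi^{-2}$. Apply Minkowski to the body
\[
|q|\leq Q,\qquad |qx-p|,\ |qy-r|\leq \kappa Q^{-1/2},
\]
together with the extra linear constraint coming from the small form $(A,B,C)$, and take $Q\asymp \xi^{3}$. Alternatively, we will directly cite Khintchine's transference theorem in its ``badly approximable'' form, exactly as done in the literature on $\Bad(s,t)$ on lines and curves (e.g. \cite{ABV18}).

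Once both directions are established, $\phi(t)\in\Bad(\frac12,\frac12)$ holds iff $\phi(t)$ lies in some $D_c$, which is exactly \eqref{E:curve-transference}. The main obstacle is keeping track of the degenerate cases: vanishing $(A,B)$, $q=0$, and rational points, where both sides fail, so these need a separate check. Getting the constants right in the Minkowski bodies is the other delicate point, and we expect to rely on the classical transference theorem rather than redoing it.
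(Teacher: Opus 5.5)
Your proposal is correct and takes essentially the paper's route. The paper's proof likewise just invokes the classical transference principle to identify $\Bad(\tfrac12,\tfrac12)$ with the dual condition $\inf_{(A,B)\neq(0,0)}\xi(A,B)^2\la Ax+By\ra>0$, and then reads off $D=\bigcup_{c>0}D_c$ pointwise. Your Minkowski sketches are not needed, and their parameter choices are off: in Step 1 the natural choice is $T\asymp q^{1/2}$, which directly gives $\xi^2|Ax+By+C|\le 2\delta^{1/2}$, and in Step 2 one takes $Q\asymp \xi^2\delta^{-1/2}$ rather than $\xi^3$. Since you fall back on the classical theorem, this does not affect correctness.
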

\begin{proof}
    Recall that $\Bad\left(s,t \right)$ is defined via \eqref{E:bad-definition}. 
    It can be shown using the transference principle (see, for example, \cite[\S1.3]{BPV11}) that
    $$
    \Bad \left(\frac12,\frac12 \right) = \left\{
        (x,y)\in\RR^2: \inf_{(A, B) \in \ZZ^2 \setminus (0,0)}\max\{|A|^2, |B|^2\}\la Ax+By \ra>0
        \right\}.
    $$
    Recalling that $\max\{|A|, |B|\}=\xi(A,B)$, we have
    $$
    \Bad \! \left(\frac12, \frac12 \right) =\bigcup\limits_{c > 0} \left\{
        (x,y)\in\RR^2: \begin{gathered}
 \xi(A,B)^2|Ax+By+C|\geq c\\
 \text{for all }(A,B,C)\in\ZZ^3,\ (A,B)\ne(0,0)
 \end{gathered}
        \right\}.
    $$
    The latter form and the definition of $D_c$ immediately yield \eqref{E:curve-transference}.
\end{proof}

\begin{lemma}\label{L:D_is_HAW}
    The set $D$ is HAW on $\RR$.
\end{lemma}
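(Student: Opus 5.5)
We want to show that $D=\phi^{-1}(\Bad(\frac12,\frac12))$ is HAW on $\RR$. The plan is to invoke the known absolute winning property of badly approximable vectors on nondegenerate planar curves, in the form proved in \cite{ABV18} (see also \cite{BNY22}). That result states: if $\phi:J\to\RR^2$ is a $C^2$ map and $\Delta_\phi(t_0)\neq 0$, then $\phi^{-1}(\Bad(s,t))$ is absolute winning, equivalently HAW in dimension one, on a neighborhood of $t_0$.

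\textbf{Step 1 (local result).} Set $U=\{t\in\RR:\Delta_\phi(t)\neq 0\}$. Since $\Delta_\phi$ is continuous, $U$ is open. Cover $U$ by countably many bounded open intervals $J_k$ with $\overline{J_k}\subset U$. On each $J_k$, the curvature $|\Delta_\phi|$ is bounded below and $\phi$ is a regular $C^2$ curve. The theorem of \cite{ABV18} for $(s,t)=(\frac12,\frac12)$ then gives that $D\cap J_k$ is absolute winning on $J_k$. If that result is stated only for analytic or for globally nondegenerate curves, we would instead rerun its proof locally: its dangerous intervals are exactly the pullbacks of the tubes \eqref{E:Dc-radius}, and the only input it needs is the lower bound on $|\Delta_\phi|$ on $\overline{J_k}$.

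\textbf{Step 2 (pass to $U$, then to $\RR$).} The charts $(J_k,\id_{J_k})$ form an atlas on $U$, so by the definition in Section \ref{hawmfld}, $D\cap U$ is HAW on $U$. By hypothesis $U$ is HAW on $\RR$, and $\RR\setminus U$ is closed. Lemma \ref{L:HAW-open} therefore gives that $D$ is HAW on $\RR$.

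\textbf{Main obstacle.} The hard part is making sure the cited curve result applies in our local $C^2$ setting, with its constants depending only on bounds for $\phi'$, $\phi''$ and $|\Delta_\phi|^{-1}$ on $\overline{J_k}$, and that "absolute winning" in that result matches HAW for subsets of $\RR$. This matching is exactly the one-dimensional equivalence noted in Section \ref{SS:curve-results}.
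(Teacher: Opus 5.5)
Your architecture matches the paper's: a local result on intervals where the curve is nondegenerate, then the atlas definition, then Lemma~\ref{L:HAW-open} using that the nondegenerate parameter set is HAW with closed complement. The gap is in Step~1, which misreads what \cite{ABV18} provides. Its Theorem~2.1 is a Schmidt-game statement: for $f\in C^2$ on a compact interval with $f''\neq 0$, the set of $u$ with $(u,f(u))\in\Bad(i,j)$ is $\tfrac12$-winning. It does not assert absolute winning.

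Schmidt winning is in general strictly weaker than absolute winning. The one-dimensional equivalence you cite from Section~\ref{SS:curve-results} is between absolute winning and HAW, not between $\tfrac12$-winning and HAW. Your fallback of rerunning the ABV argument locally would again only give $\tfrac12$-winning. Also, \cite{BNY22} treats analytic curves, so it does not cover a $C^2$ curve.

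The missing ingredient is the upgrade the paper takes from \cite[Corollary~1.12]{BHNS25}, which turns a $\tfrac12$-winning subset of $\RR$ into an HAW one. To apply it, the paper first enlarges the local set to $\widehat A_J=(A_J\cap X(J))\cup(\RR\setminus X(J))$, which is $\tfrac12$-winning on all of $\RR$. Only then does it restrict back to $X(J)$.

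A smaller omission: the ABV theorem is about graphs, not about a parametrized curve $\phi$. The paper removes $Z_1=\{t:\Delta_\phi(t)X'(t)Y'(t)=0\}$, whose complement is still HAW by the isolated-zeros argument in the proof of Lemma~\ref{P:conditional-curves}. On an interval $J$ with $\overline J\cap Z_1=\varnothing$, $X$ is a $C^2$ diffeomorphism onto its image, and the curve is the graph of $f=Y\circ X^{-1}$ with $f''(X(t))=\Delta_\phi(t)/X'(t)^3\neq 0$. The HAW property is then carried from $X(J)$ back to $J$ by Lemma~\ref{L:HAW-mnfd}(iii).

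Your larger set $U=\{\Delta_\phi\neq 0\}$ can also be made to work, but you need two further remarks. Near a zero of $X'$, the curve is a graph over the $y$-axis, since regularity gives $Y'\neq 0$. And $\Bad(\tfrac12,\tfrac12)$ is invariant under $(x,y)\mapsto(y,x)$. You must also include the same $C^1$ change-of-variables step; as written, you apply the graph theorem directly to $\phi$.
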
  

\begin{proof} In the special case when $X(t) = t$ and $Y''(t) \neq 0$, due to Lemma \ref{L:D_is_bad}, \cite[Theorem 2.1]{ABV18} proves that $D=\phi^{-1}\!\left(\Bad\!\left(\tfrac12,\tfrac12\right)\right)$ is $\frac{1}{2}$-winning. Due to \cite[Corollary 1.12]{BHNS25}, $D$ is HAW. 

The general case follows via a change of coordinates. Let
\[
 Z_1=\{t\in\RR:\Delta_\phi(t)X'(t)Y'(t)=0\}.
\]
It is closed, and $\RR \setminus Z_1$ is HAW: see the argument in the proof of Lemma \ref{P:conditional-curves}. Cover $\RR\setminus Z_1$ by bounded
intervals $J$ whose closures avoid $Z_1$.  On such an interval, $X$ is
a $C^2$ diffeomorphism onto its image, and the curve is the graph of
$f=Y\circ X^{-1}$.  Direct differentiation gives
\begin{equation}\label{E:curve-second-derivative}
 f''(X(t))=\frac{\Delta_\phi(t)}{X'(t)^3}\ne0.
\end{equation}
By \cite[Theorem 2.1]{ABV18}, the set
\[
 A_J=\{u\in X(\overline J):(u,f(u))\in
 \Bad(\tfrac12,\tfrac12)\}
\]
is $1/2$-winning on the compact interval $X(\overline J)$. Arguing as before, we conclude that the set
\[
 \widehat A_J=(A_J\cap X(J))\cup(\RR\setminus X(J))
\]
is $1/2$-winning on $\RR$.  By
\cite[Corollary 1.12]{BHNS25}, it is HAW.  Restricting the game back to $X(J)$ and using the $C^1$-invariance in Lemma \ref{L:HAW-mnfd}, we conclude that $D$ is HAW on every such $J$, thus on $\RR \setminus Z_1$. Using Lemma \ref{L:HAW-open}, we conclude that it is HAW on $\RR$.
\end{proof}

By \eqref{E:ScQ} and \eqref{E:curve-D}, one has \begin{equation}\label{E:curve-W}
 D \cap \phi^{-1}(E_{\bQ}) = D\cap\bigcap_{n=1}^{\infty}S_{\bQ,1/n}.
\end{equation}
Note that Lemma \ref{P:conditional-curves} shows that every
$S_{\bQ,1/n}$ is HAW, and Lemma \ref{L:D_is_HAW} shows that $D$ is HAW. So by countable-intersection stability $D\cap \phi^{-1}(E_{\bQ})$ is also HAW. Since a superset of a HAW set is HAW, we conclude that
\begin{equation}\label{E:cubic-on-curve}
 \phi^{-1}(E_{\bQ})\quad\text{is HAW}\quad \text{whenever \eqref{E:cubic} holds.}
\end{equation}

\begin{proof}[Proof of Theorem \ref{T:main-curves}] 
To deduce Theorem \ref{T:main-curves} from \eqref{E:cubic-on-curve}, we repeat the argument used in the proof of Theorem \ref{T:1+epsilon}.

Suppose \eqref{E:1+epsilon} holds. There exist $c>0$ and $\tau>1$ such that $Q_{m+1}\geq cQ_m^\tau$.  Choose $k$ with $\tau^k>3$ and, for $0\leq r<k$, put
\[
 \bQ^{(r)}=(Q_{r+kj})_{j\geq0}.
\]
Iterating the above inequality yields that 
\[
 \frac{Q_{r+k(j+1)}}{Q_{r+kj}^3}\longrightarrow\infty.
\]
Thus \eqref{E:cubic-on-curve} applies to every $\bQ^{(r)}$.  Since
\[
 E_{\bQ}=\bigcap_{r=0}^{k-1}E_{\bQ^{(r)}},
\]
finite-intersection stability shows that $\phi^{-1}(E_{\bQ})$ is HAW.
Since $\phi=\psi\circ\sigma$,
\[
 \psi^{-1}(E_{\bQ})=\sigma\bigl(\phi^{-1}(E_{\bQ})\bigr),
\]
which is HAW on $V$ by Lemma \ref{L:HAW-mnfd}(iii).  This holds in every
coordinate arc, and therefore
$E_{\bQ}\cap\mathcal C$ is HAW (equivalently, absolute winning) on $\mathcal C$.

To prove the result for $E$, note that Lemma
\ref{L:LCULC} gives $E_{\bQ}\subset E$, so $E\cap\mathcal C$ is HAW as a superset of $E_{\bQ} \cap \mathcal C$.
This proves Theorem \ref{T:main-curves}.  The dimension assertion follows from Lemma \ref{L:HAW-mnfd}(i).
\end{proof}

\begin{proof}[Proof of Theorem \ref{T:lines}]
    \begin{enumerate}
        \item Let us parameterize $\mathcal{C}$ by $\phi(t) = (X(t), Y(t))$ where $X(t) = t, Y(t)= at + b$. First, assume that \eqref{E:cubic} holds. The set \eqref{E:curve-exceptional} is empty, and thus by Lemma \ref{L:local-conditional} the set $S_{\bQ, c}$ is HAW on every bounded interval of $\RR$, thus on the whole $\RR$, for every $c > 0$. By \cite[Theorem 1.2]{ABV18} and \eqref{E:curve-transference}, the set $D$ defined via \eqref{E:curve-D} is $1/2$-winning, equivalently, HAW on $\RR$. We just established that the conclusions of Lemma \ref{P:conditional-curves} and Lemma \ref{L:D_is_HAW} hold for $\mathcal{C}$. The remainder of the proof of Theorem \ref{T:main-curves} holds verbatim.
        \item Fix $x \in \RR$, and let $y = ax + b$. If $x \in \QQ$, then $\min\limits_{1\leq q\leq Q}\la qx\ra = 0$ for all $Q$ large enough, thus $(x, y) \notin E$. Suppose $x \notin \QQ$, and let $p_k/q_k$ be the convergents of $x$. Then, $|q_k x - p_k| \leq \frac{1}{q_{k+1}}$. Now suppose $a = a'/d$ and $b = b'/d$ where $a', b' \in \ZZ$ and $d \in \NN$; define also $q_k': = dq_k$. One has $\la q_k' x \ra \le \frac{d}{q_{k+1}} = \frac{d^2}{q_{k+1}'}$ and
        $$
        \la q_k' y \ra \leq |dq_k(ax + b) - (a'p_k + b'q_k)| = |a'(q_k x - p_k)| \le \frac{|a'|d}{q_{k+1}'}.
        $$
        Therefore for $q_k' \leq Q < q_{k+1}'$ one has 
        $$
        Q\cdot \min_{1\leq q\leq Q}\la qx\ra\la qy\ra \leq Q\cdot \la q_k'x\ra\la q_k'y\ra \leq \frac{|a'|d^3}{q_{k+1}'} \to 0
        $$
        as $k \to \infty$, and $(x,y) \notin E$.
    \end{enumerate}
\end{proof}

  \appendix \section{Independence of HAW on $\RR^2$ and HAW on curves}\label{appendix1}

    The ambient and curvewise HAW properties do not imply one another, even if
curvewise HAW is required for every curve satisfying the assumptions of Theorem \ref{T:main-curves}.

\begin{example}[HAW on curves does not imply HAW on $\RR^2$]
\label{Ex:curvewise-not-ambient-gasket}
Let
\[
 G=\left\{\sum_{m=1}^{\infty}3^{-m}d_m:
 d_m\in\{(0,0),(1,0),(0,1)\}\right\},
 \qquad K=\RR^2\setminus G.
\]
Then $G$ is the one-dimensional Sierpiński gasket studied by Kenyon \cite{Ken97}. Equivalently, it is
the attractor of the homotheties
\[
 f_d(z)=\frac{z+d}{3},
 \qquad d\in\{(0,0),(1,0),(0,1)\}.
\]
In particular, the defining iterated function system satisfies the strong separation condition.  
We will prove that:
\begin{enumerate}
    \item[(SG1)] the set $K$ is not HAW on $\RR^2$;
    \item[(SG2)] the set $K$ is HAW on every regular $C^2$ curve 
     such that
    \begin{equation}\label{E:gasket-condition}
        \text{the set $\mathcal{C}_{nz} : = \{ z \in \mathcal{C}: \kappa(z) \neq 0 \}$ is absolutely winning on $\mathcal{C}$.}
    \end{equation}
\end{enumerate}
\end{example}

\begin{proof}[Proof of (SG1)] Let $D=\operatorname{diam}G$, and put
\[
 \delta=\min_{|e|=1}\left(
       \max_{z\in G}\langle e,z\rangle
       -\min_{z\in G}\langle e,z\rangle\right).
\]
The minimum exists and is positive, because $G$ contains the three noncollinear
points $(0,0),(1/2,0),(0,1/2)$.  If
$L=\{z\in\RR^2:\langle e,z\rangle=c\}$ is an affine line, then by continuity, the range of
$\langle e,\cdot\rangle$ on $G$ has length at least $\delta$. So there exists some $z\in G$ satisfying $\operatorname{dist}(z,L)\ge \delta/2$.

Given $z\in G$ and $0<\rho<D$, we choose a cylinder
$(f_{d_1}\circ \cdots\circ f_{d_n})(G)$ 
containing the point $z$ such that
\[
 3^{-n}D\le\rho<3^{-n+1}D.
\]
So the cylinder is contained in $B(z,\rho)$.  Applying the preceding width estimate after the homothety $f_{d_1}\circ \cdots\circ f_{d_n}$, we see that for every affine line $L$, the cylinder contains a point whose distance from $L$ is at least
\[
 \frac{3^{-n}\delta}{2}>\frac{\delta}{6D}\rho.
\]
Thus $G$ is hyperplane diffuse in the sense of \cite[Definition~4.2]{BFKRW12}.  If $K$ were HAW in $\RR^2$, then \cite[Proposition~4.9]{BFKRW12} would make it HAW on $G$.  By \cite[Proposition~4.7 and Theorem~4.6]{BFKRW12}, $K\cap G$ would then have positive Hausdorff dimension, which is impossible since $K\cap G=\varnothing$. This completes the proof.
\end{proof}

\begin{proof}[Proof of \textup{(SG2)}]
Let $\mathcal C$ be a regular $C^2$ curve satisfying
\eqref{E:gasket-condition}.  We first show that \textup{(SG2)} follows
from
\begin{equation}\label{E:gasket-curve-slice}
 \dimH(\mathcal C_{nz}\cap G)=0.
\end{equation}
Indeed, let $\psi:V\to\mathcal C_{nz}$ be an arbitrary coordinate
parametrization.  Covering $V$ by countably many compact intervals on
which $\psi$ is bi-Lipschitz, \eqref{E:gasket-curve-slice} gives
\[
 \dimH\psi^{-1}(G)=0.
\]
Lemma~\ref{L:HAW-mnfd}\textup{(iv)} therefore shows that
$\psi^{-1}(K)$ is HAW on $V$.  Hence $K\cap\mathcal C_{nz}$ is HAW on
$\mathcal C_{nz}$.

Since curvature is continuous, $\mathcal C\setminus\mathcal C_{nz}$
is closed.  Moreover, $\mathcal C_{nz}$ is absolutely winning, and
hence HAW, on $\mathcal C$ by \eqref{E:gasket-condition}.
Lemma~\ref{L:HAW-open}
now shows that $K$ is HAW on $\mathcal C$.

It remains to prove \eqref{E:gasket-curve-slice}.  We first note that
\begin{equation}\label{E:gasket-line}
\text{for every affine line $L$ of irrational slope,} \ 
 \dimH(L\cap G)=0.
\end{equation}
Indeed, write the slope of $L$ as $u\in\RR\setminus\QQ$.  The map
\[
 \Pi_{-u^{-1}}:\RR^2\to\RR,\qquad
 \Pi_{-u^{-1}}(x,y)=x-u^{-1}y,
\]
is constant on $L$.  Since $L\cap G$ is Borel and
$-u^{-1}\notin\QQ$, \cite[Corollary~6.3]{Shm19} gives
\[
 \dimH(L\cap G)
 =\dimH\Pi_{-u^{-1}}(L\cap G)=0.
\]

Now cover $\mathcal C_{nz}$ by countably many compact embedded
coordinate arcs, and let
$\phi:J\to\mathcal C_{nz}$ be a regular parametrization of one such
arc.  For $v\in\ZZ^2\setminus\{0\}$ put
\[
 g_v(t)=\det(\phi'(t),v).
\]
If $g_v(t_0)=0$, then $\phi'(t_0)=a v$ for some $a\ne0$.  Since the
curvature does not vanish on $\mathcal C_{nz}$,
\[
 g_v'(t_0)
 =\det(\phi''(t_0),v)
 =-\frac{\det(\phi'(t_0),\phi''(t_0))}{a}
 \ne0.
\]
Thus every zero of $g_v$ is isolated, and consequently its zero set is
countable.  Taking the union over
$v\in\ZZ^2\setminus\{0\}$ shows that the tangent direction of this arc
is rational at only countably many points.  Lemma~\ref{L:Shmerkin} (see below) therefore gives
$ \dimH(\phi(J)\cap G)=0$.
The countable arc cover proves \eqref{E:gasket-curve-slice}, completing
the proof.
\end{proof}

{\begin{lemma}\label{L:Shmerkin}
    Let $G\subseteq \RR^2$ be the one-dimensional Sierpiński gasket, and let $\mathcal{C}$ be a regular $C^1$ curve whose tangent directions are rational only at points in a zero-dimensional set. Then $\dimH(\mathcal{C}\cap G)=0$.
\end{lemma}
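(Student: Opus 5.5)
The plan is to use the result of Shmerkin already invoked for lines. By Lemma~\ref{L:HAW-mnfd}-type countable stability of Hausdorff dimension, it suffices to work locally. First remove from $\mathcal C$ the zero-dimensional set $N$ of points with rational tangent direction; $\dimH(N\cap G)=0$ trivially. The complement $\mathcal C\setminus N$ is not open, so rather than covering it by arcs avoiding $N$, I would argue pointwise via a density/tangent-line argument. Concretely, I want to show that for every $z\in(\mathcal C\setminus N)\cap G$ and every $\epsilon>0$ there is a neighbourhood $U_z$ with $\dimH(\mathcal C\cap G\cap U_z)\le\epsilon$. This is too strong as stated, since the neighbourhood may contain points of $N$. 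So I would instead estimate the \emph{local upper dimension} of $\mathcal C\cap G$ at each point $z\notin N$, and then use the standard fact that if $\overline{\dim}_{\mathrm{loc}}(\mu,z)\le\epsilon$ for all $z$ in a set supporting a Frostman measure $\mu$, then the set has dimension $\le\epsilon$.

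The key quantitative input is a uniform version of \cite[Corollary~6.3]{Shm19}. Shmerkin's result on projections of the gasket states that for irrational direction the projection $\Pi_u G$ has $\dimH=1$ and, more strongly, that all fibres $\Pi_u^{-1}(w)\cap G$ have dimension $0$; in fact his method (via $L^q$ dimensions of the projected self-similar measures being $\min\{1,\ldots\}$) yields a \emph{uniform} bound: for $u$ irrational and $\epsilon>0$ there are $\delta_0,\eta>0$ such that for all directions $u'$ with $|u'-u|<\eta$, every $\delta\le\delta_0$, and every line $\ell$ in direction $u'$, the $\delta$-neighbourhood of $\ell$ meets at most $\delta^{-\epsilon}$ cylinders of $G$ of size $\delta$ inside a unit region. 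I would extract this from the $L^q$-spectrum continuity in Shmerkin's paper (the $L^q$ dimension of projections is lower semicontinuous in the direction and equals the full value at irrational directions), giving a tube-counting estimate uniform in a neighbourhood of $u$.

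Given this, fix $z\in\mathcal C\cap G$ with irrational tangent direction $u$. By $C^1$ regularity, for small $r$ the arc $\mathcal C\cap B(z,r)$ lies within $o(r)$ of a line segment of direction close to $u$, and more precisely at every scale $\delta\ll r$ it is covered by $\lesssim r/\rho$ tubes of width $\delta$ and length $\rho$ with directions within $\eta$ of $u$, where $\rho=\rho(\delta)$ is chosen so the curve deviates at most $\delta$ from its tangent over length $\rho$ (for $C^1$, $\rho/\delta\to\infty$ as $\delta\to0$). Rescaling by the self-similarity of $G$ (cylinders of size $\rho$), each such tube meets at most $(\rho/\delta)^{\epsilon}$ gasket cylinders of size $\delta$, and there are at most $\lesssim r/\rho$ cylinders of size $\rho$ along the arc. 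Hence $N_\delta(\mathcal C\cap G\cap B(z,r))\lesssim (r/\rho)(\rho/\delta)^\epsilon$. The main obstacle is that $r/\rho$ might be comparable to a power of $\delta^{-1}$; to handle this I would instead pick $r$ small enough that on $B(z,r)$ all tangent directions are within $\eta$ of $u$ (continuity of $\phi'$), and then apply the tube estimate directly on the rescaled picture at every dyadic scale inductively, giving a multiscale product bound $N_\delta\lesssim_\epsilon \delta^{-2\epsilon}$ for the piece in $B(z,r)$. This gives $\overline{\dim}_B(\mathcal C\cap G\cap B(z,r))\le 2\epsilon$.

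Finally, the set of points with irrational tangent direction is covered by such balls $B(z,r_z)$; by Lindelöf choose countably many, each contributing dimension $\le2\epsilon$, and add $N$. Countable stability gives $\dimH(\mathcal C\cap G)\le2\epsilon$, and letting $\epsilon\to0$ proves the lemma. The step I expect to be hardest is extracting from \cite{Shm19} the tube-counting estimate uniform over a neighbourhood of directions, since the cited corollary is stated for a single fixed irrational direction.
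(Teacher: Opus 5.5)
Your route is genuinely different from the paper's, and in outline it is sound.

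The paper argues softly, by contradiction. It takes a compact $F\subset\mathcal C\cap G$ of positive dimension that avoids the rational-tangent points, and passes to a Furstenberg microset $M$ with $\dimH M\geq\dimH F$ \cite[Proposition~5.7]{KOR18}. The $C^1$ Taylor expansion puts $M$ on a line parallel to an irrational tangent. Since microsets of the strongly separated self-homothetic gasket are minisets \cite[Proposition~5.4]{KR16}, a homothetic copy of $M$ lies inside $G\cap L$ for some line $L$ of irrational slope, contradicting the qualitative statement \eqref{E:gasket-line}.

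You instead prove a quantitative local covering bound by a multiscale self-similarity argument. This avoids microsets and yields more, namely small upper box dimension of $\mathcal C\cap G\cap B(z,r)$. The price is that it needs the quantitative form of Shmerkin's theorem: Frostman exponent $1$ (i.e.\ $L^\infty$-dimension $1$) of the projected natural measure in direction $u$, which is what produces tube counts $\lesssim_\epsilon\delta^{-\epsilon}$. The paper uses only the Hausdorff-dimension statement for lines.

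Three points in your execution should be adjusted.

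\emph{Uniformity over nearby directions is unnecessary.} You flag this as the hardest step, but if all tangent directions on $\mathcal C\cap B(z,r)$ are within $\eta$ of $u$, then inside any cylinder of diameter $d$ the arc lies in a tube of width $O(\eta d)$ around a line of the fixed direction $u$. The gasket's maps are homotheties, so only the single direction $u$ ever enters. The uniform version is true, via submultiplicativity of $L^q$ sums, but lower semicontinuity of $\tau(q)$ alone gives exponents, not uniform constants.

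\emph{The induction must run in blocks of a fixed large length $n_0$, not one dyadic scale at a time.} Write $N_k$ for the number of level-$kn_0$ cylinders meeting the arc. Rescaling gives $N_{k+1}\leq C_\epsilon 3^{n_0\epsilon}N_k$ provided $\eta\lesssim 3^{-n_0}$. So you first fix $n_0$ with $C_\epsilon\leq 3^{n_0\epsilon}$, and only then shrink $r$ so that $\eta\lesssim3^{-n_0}$; this gives $N_k\lesssim 3^{2\epsilon k n_0}$. A step-by-step induction would instead compound a fixed constant at every scale.

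\emph{Your opening worry is unfounded.} You say a neighbourhood bound is ``too strong'' because $U_z$ may contain rational-tangent points. But your final argument proves exactly that bound, since it uses only closeness of the tangent directions to $u$, not their irrationality.
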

 \begin{proof}
Suppose that $\dimH(\mathcal{C}\cap G)>0$. We choose a regular parametrization $\phi:J\to\mathcal C$ of one arc, 
and write $Z=\{t\in J:\Span_{\RR}\{\phi'(t)\}\in \PP^1(\QQ)\}$. Since the set $Z$ has zero dimension, by the inner regularity of Frostman measure, there exists a compact subset $$
F\subset(\phi(J)\cap G)\setminus \phi(Z)
 \quad\text{with}\quad \dimH{F}>0.
$$

By \cite[Proposition~5.7]{KOR18}, one may choose a set $M$ with the following properties: \begin{itemize}
    \item[(M1)] $\dimH{M}\geq \dimH{F}$;
    \item[(M2)] $M$ is the limit of a sequence of sets $(M_n)_{n\geq 0}$ in the Hausdorff metric, where $$
    M_n\subset(\lambda_nF+z_n)\cap[0,1]^2,
 \quad \lambda_n\to +\infty, \quad \text{and}\quad z_n\in\RR^2.
    $$
\end{itemize}
Here the set $M$ is called a \textsl{Furstenberg microset} of $F$, and the sets $M_n$ are called \textsl{minisets} of $F$. 

Next we shall show that $M$ is contained in an affine line with irrational slope. Indeed, fix $w\in M$, choose
$w_n=\lambda_nf_n+z_n\in M_n$ with $w_n\to w$, and pass to a subsequence such that $f_n\to f\in F$.  For any $w'\in M$, choose
$w'_n=\lambda_nf'_n+z_n\in M_n$ with $w'_n\to w'$. So we have $$
\lambda_n(f_n-f_n')\to w-w'\quad \text{as}\quad n\to +\infty.
$$
Note that
$|f_n-f'_n|=O(\lambda_n^{-1})$. We write $f_n=\phi(t_n)$ and
$f'_n=\phi(t'_n)$. Since $\phi$ is bi-Lipschitz on the compact embedded arc, one has
\[
 t_n\to t:=\phi^{-1}(f),\quad \text{and}
 \quad |t_n-t_n'|=O(\lambda_n^{-1}).
\]
Taylor's expansion of $\phi$ on $J$ now gives
\[
 \lambda_n\left((f_n-f'_n)
 -(t_n-t'_n)\phi'(t_n)\right)\to 0\quad \text{as}\quad n\to +\infty.
\]
Since the scalars $\lambda_n(t_n-t_n')$ are bounded, by passing to a subsequence, we see that $$w-w'\in\RR\phi'(t).$$  
Thus, we conclude that $M$ is contained in an affine
line $L$ parallel to the tangent of the arc at $t$.  Since
$t\notin Z$, this line has irrational slope.

Now we consider the minisets of $G$:
\[N_n=(\lambda_nG+z_n)\cap[0,1]^2.\]
Since $M_n\subset N_n$, the sets $N_n$ are nonempty.  By compactness of the hyperspace of nonempty compact subsets of $[0,1]^2$, a further subsequence converges in the Hausdorff metric to a microset $N$ of $G$; moreover, $M\subset N$. Note that the gasket $G$ is self-homothetic and satisfies the strong separation condition. It follows from \cite[Proposition~5.4]{KR16} that any microset of $G$ can be realized as a miniset. Consequently, one has
\[
 M\subset N\subset(\lambda G+z)\cap[0,1]^2
\]
for some $\lambda\ge1$ and $z\in\RR^2$, so $G$ contains the set $\lambda^{-1}(M-z)$. 

Therefore, we conclude that the subset $\lambda^{-1}(M-z)\subseteq G$ has positive Hausdorff dimension and lies on a line of irrational slope. This contradicts \eqref{E:gasket-line}.
 \end{proof}
 }

\begin{remark}
    A general version of Lemma \ref{L:Shmerkin} was announced in \cite[Remark 8.3(c)]{Shm19} without proofs. For the sake of completeness, we provide the above proof.
\end{remark}

\begin{example}[HAW on $\RR^2$ does not imply HAW on all $C^2$ curves with nonzero curvature]
\label{Ex:ambient-not-curvewise}
Let
\[
 \Bad=\left\{a\in\RR:\inf_{q\in\NN}q\la qa\ra>0\right\},
 \qquad E=\RR\times\Bad = \Bad(0,1).
\]
As mentioned in Section \ref{SS:LCULC}, the set $E$ is HAW.

Let $f\in C^2(\RR)$ satisfy $f''(x)\neq 0$ for all $x\in\RR$. Consider $$
F(x,y)=(x,y-f(x)),\quad \text{and}\quad S=F^{-1}(E). 
$$
Since $\det DF(x,y)=1$, \cite[Theorem~2.4]{BFKRW12} shows that the set $S$ is HAW on $\RR^2$.
On the other hand, for any $a\in\RR$, the set $$
\mathcal{C}_a:=F^{-1}(\RR\times \{a\})=\{(x,f(x)+a):x\in\RR\}
$$
is a $C^2$ curve with curvature \[
 \frac{|f''(x)|}{(1+f'(x)^2)^{3/2}} \neq 0.
\]
When $a\notin\Bad$, one has $\mathcal{C}_a\cap S=\varnothing$, which is clearly not HAW on $\mathcal{C}_a$.
\end{example}

\section*{Acknowledgments} The authors are grateful to Lifan Guan, Dmitry Kleinbock, and Nikolay Moshchevitin for helpful suggestions. During the preparation of this work, the authors used GPT-5.6 Pro to assist in doing calculations and checking proofs. All mathematical arguments were subsequently verified independently by the authors, who take full responsibility for the correctness of the results.

\end{document}